\newtheorem{thm}{Theorem}[section]
\newtheorem{prop}[thm]{Proposition}
\newtheorem{lem}[thm]{Lemma}
\newtheorem{cor}[thm]{Corollary}
\numberwithin{equation}{section}
\theoremstyle{definition}
\newtheorem{definition}[thm]{Definition}
\newtheorem{remark}[thm]{Remark}
\newtheorem{ex}[thm]{Example}
\newcommand{\Db}{{\rm D}^{\rm b}}
\newcommand{\Pic}{{\rm Pic}}
\newcommand{\rk}{{\rm rk}}
\newcommand{\Hom}{{\rm Hom}}
\newcommand{\cal}{\mathcal}
\newcommand{\ka}{{\cal A}}
\newcommand{\kc}{{\cal C}}
\newcommand{\ki}{{\cal I}}
\newcommand{\km}{{\cal M}}
\newcommand{\ko}{{\cal O}}
\newcommand{\kp}{{\cal P}}
\newcommand{\LL}{\mathbb{L}}
\newcommand{\ZZ}{\mathbb{Z}}
\newcommand{\QQ}{\mathbb{Q}}
\newcommand{\RR}{\mathbb{R}}
\newcommand{\CC}{\mathbb{C}}
\newcommand{\FF}{\mathbb{F}}
\newcommand{\PP}{\mathbb{P}}
\newcommand{\cctwotilde}{\,\,\tilde{\phantom{Q.\!\!}}\!\!{\tilde{\!\kc}}}
\newcommand{\OO}{{\rm O}}
\renewcommand{\to}{\xymatrix@1@=15pt{\ar[r]&}}
\renewcommand{\rightarrow}{\xymatrix@1@=15pt{\ar[r]&}}
\renewcommand{\leftarrow}{\xymatrix@1@=15pt{&\ar[l]}}
\renewcommand{\mapsto}{\xymatrix@1@=15pt{\ar@{|->}[r]&}}
\renewcommand{\twoheadrightarrow}{\xymatrix@1@=18pt{\ar@{->>}[r]&}}
\renewcommand{\hookrightarrow}{\xymatrix@1@=15pt{\ar@{^(->}[r]&}}
\newcommand{\hook}{\xymatrix@1@=15pt{\ar@{^(->}[r]&}}
\newcommand{\congpf}{\xymatrix@1@=15pt{\ar[r]^-\sim&}}
\renewcommand{\cong}{\simeq}
\begin{document}

\title[]{Hodge theory of cubic fourfolds, their Fano varieties, and associated K3 categories}

\author[D. Huybrechts]{Daniel Huybrechts}

\address{Mathematisches Institut,
Universit{\"a}t Bonn, Endenicher Allee 60, 53115 Bonn, Germany}
\email{huybrech@math.uni-bonn.de}

\begin{abstract} \noindent
These are notes of lectures given at the school `Birational Geometry of Hypersurfaces' in Gargnano in March 2018.
The main goal was to discuss the Hodge structures that come naturally associated with a cubic fourfold. 
The emphasis is on the Hodge and lattice theoretic aspects with many technical details worked out explicitly.
More geometric or derived results are only hinted at.

 \vspace{-2mm}
\end{abstract}

\maketitle
{\let\thefootnote\relax\footnotetext{The author is supported by the SFB/TR 45 `Periods,
Moduli Spaces and Arithmetic of Algebraic Varieties' of the DFG
(German Research Foundation) and the Hausdorff Center for Mathematics.}
\marginpar{}
}

The  primitive Hodge structure of a smooth cubic fourfold $X\subset \PP^5$ is  concentrated in degree four and it is of a very particular type.
Once a Tate twist is applied and the sign of the intersection form is changed, it reveals its true nature. It very much looks like the Hodge structure of
a K3 surface. In his thesis Hassett \cite{HassComp} studied this curious relation  and the intricate lattice theory behind it in greater detail. He established a transcendental correspondence between polarized K3 surfaces of certain degrees and special cubic fourfolds, some
aspects of which are reminiscent of the Kuga--Satake construction. The geometric nature of the Hassett correspondence is still not completely understood but it seems that derived categories are central for its understanding.
Work of Addington and Thomas \cite{AT} represents an important step in this direction, combining Hassett's Hodge theory with
Kuznetsov's categorical approach to hypersurfaces.

The aim of the lectures was to discuss the Hodge structures $H^4(X,\ZZ)$, $H^4(X,\ZZ)_{\rm pr}$, $\widetilde H(X,\ZZ)$, and $H^2(F(X),\ZZ)$,
all naturally associated with a cubic fourfold $X$, and their relation to the Hodge structures $H^2(S,\ZZ)$, $H^2(S,\ZZ)_{\rm pr}$,
$\widetilde H(S,\ZZ)$, and $\widetilde H(S,\alpha,\ZZ)$ that come with a (polarized, twisted) K3 surface $S$.
For a discussion of more motivic aspects, partially covered by the original lectures, and of derived aspects, not touched upon
at all, we have to refer to the existing literature. Most of the content of the lectures is also covered
by \cite{HuyCub}.

\smallskip
\noindent
{\bf Acknowledgements:} I wish to thank Andreas Hochenegger and Paolo Stellari for the organization of the school and  PS
for gently insisting that I should write up these notes. The many questions of the participants have been stimulating and helped me to
improve the quality of the notes. Special thanks to Emma Brakkee, who also went through a first draft and pointed
out many inaccuracies, and Pablo Magni.

\section{Lattice and Hodge theory for cubic fourfolds and K3 surfaces}
In the first section, we collect all facts from Hodge and lattice theory relevant for the study of cubic fourfolds. The 
curious relation between the lattice theory of cubic fourfolds and K3 surfaces has been systematically studied first by Hassett \cite{HassComp}.
Earlier results in this direction are due to Beauville and Donagi \cite{BD}.

\subsection{}
As abstract lattices, the middle cohomology and the
primitive cohomology of a smooth cubic fourfold $X\subset\PP^5$ are  described by 
\begin{eqnarray*}
H^4(X,\ZZ)&\cong& {\rm I}_{21,2}\cong E_8^{\oplus 2}\oplus U^{\oplus 2}\oplus {\rm I}_{3,0},\\
H^4(X,\ZZ)_{\rm pr}&\cong &E_8^{\oplus 2}\oplus U^{\oplus 2}\oplus A_2,
\end{eqnarray*}
where the square of the hyperplane class $h$ is given as $h^2=(1,1,1)\in {\rm I}_{3,0}$. Here,
we use the common notation $E_8$ and $U$ for the unique, unimodular, even lattices of signature
$(8,0)$ and $(1,1)$, respectively, and ${\rm I}_{m.n}$ for the unique, unimodular, odd lattice of signature $(m,n)$,
see \cite[Sec.\ 1.1.5]{HuyCub} for details and references.
It will be convenient to change the sign and introduce the \emph{cubic lattice}
and the \emph{primitive cubic lattice} as
\begin{eqnarray*}
\bar\Gamma &\coloneqq &{\rm I}_{2,21}\cong E_8(-1)^{\oplus 2}\oplus U^{\oplus 2}\oplus {\rm I}_{0,3}\cong H^4(X,\ZZ)(-1),\\
\Gamma &\coloneqq& E_8(-1)^{\oplus 2}\oplus U^{\oplus 2}\oplus A_2(-1)
\cong H^4(X,\ZZ)_{\rm pr}(-1).
\end{eqnarray*}
In particular, from now on $(h^2)^2=-3$. The twist should not be confused
with the Tate twist of the Hodge structure.
It turns out that $E_8(-1)^{\oplus 2}$, certainly the most interesting part of these lattices, will hardly play any role in our discussion.
We shall henceforth abbreviate it by $$E\coloneqq E_8(-1)^{\oplus 2}$$
and consequently write $$\bar\Gamma \cong E\oplus U^{\oplus 2}\oplus {\rm I}_{0,3}\text{ and }\Gamma \cong E\oplus U^{\oplus 2}\oplus A_2(-1).$$
Although there is a priori no geometric reason why K3 surfaces should enter the picture at all, their intersection form will play a central role
in our discussion. We will first address  this first purely on the level of abstract lattice  theory and later add Hodge structures. 

Recall that for a complex K3 surface $S$, its middle
cohomology with the intersection form is the lattice 
$$H^2(S,\ZZ)\cong E\oplus U^{\oplus 3}\cong E\oplus U_1\oplus U_2\oplus U_3\eqqcolon\Lambda,$$
see \cite[Ch.\ 14]{HuyK3}. The summands $U_i$, $i=1,2,3$, are copies of the hyperbolic plane $U$. Indexing them will make
the discussion more explicit and will help us to avoid ambiguities later on. 

 The full cohomology $H^*(S,\ZZ)$ is also endowed with a unimodular
intersection form. It is customary to introduce a sign in the pairing on $(H^0\oplus H^4)(S,\ZZ)$, which, however, does not change the abstract isomorphism type, for $U\cong U(-1)$. The resulting lattice is the \emph{Mukai lattice}
\begin{eqnarray*}
\widetilde H(S,\ZZ)&\coloneqq& H^2(S,\ZZ)\oplus (H^0\oplus H^4)(S,\ZZ)\cong E\oplus U^{\oplus 3}\oplus U_4\\
&\cong &E\oplus U_1\oplus U_2\oplus U_3\oplus U_4\eqqcolon \widetilde\Lambda.
\end{eqnarray*}
The standard basis of $U$  consists of isotropic vectors $e,f$ with $(e.f)=1$. We shall denote the standard bases in the first three copies of $U$
as $e_i,f_i\in U_i$, $i=1,2,3$. However, in order to take into account  the sign change in the Mukai
pairing, we shall use the convention that $(e_4.f_4)=-1$ and that $e_4=[S]\in H^0(S,\ZZ)$
and $f_4=[x]\in H^4(S,\ZZ)$ with $x\in S$ a point.

Next, we introduce an explicit embedding $A_2\,\hookrightarrow\widetilde\Lambda$.
Here, $A_2=\ZZ\, \lambda_1\oplus\ZZ\,\lambda_2$ is the lattice of rank two given by
the intersection form $\left(\begin{matrix}2&-1\\-1&2\end{matrix}\right)$ and we define 
\begin{equation}\label{eqn:embA2}
A_2\,\hookrightarrow U_3\oplus U_4\subset\widetilde\Lambda
\end{equation} by $\lambda_1\mapsto e_4-f_4$ and $\lambda_2\mapsto e_3+f_3+f_4$.
The orthogonal complement $\langle\lambda_1,\lambda_2\rangle^\perp=A_2^\perp\subset\widetilde\Lambda$ is the lattice
$$A_2^\perp=E\oplus U_1\oplus U_2\oplus A_2(-1),$$
where $A_2(-1)\subset U_3\oplus U_4$ is spanned by $\mu_1\coloneqq e_3-f_3$ and
$\mu_2\coloneqq -e_3-e_4-f_4$ satisfying $(\mu_i)^2=-2$ and $(\mu_1.\mu_2)=1$.

\begin{remark}\label{rem:l1perp}
We observe that $\lambda_1^\perp=E\oplus U_1\oplus U_2\oplus U_3\oplus \ZZ(-2)$, where the last direct summand
is generated by $e_4+f_4$. Hence, $\lambda_1^\perp\cong \Lambda\oplus\ZZ(-2)$, which is a lattice of discriminant\footnote{The sign of the discriminant will
be of no importance in our discussion, we tacitly work with its absolute value.} 
 ${\rm disc}(\lambda_1^\perp)=2$ and which contains $A_2^\perp\oplus \ZZ\,(\lambda_1+2\lambda_2)$ as a sublattice of index three.
 As $ H^2(S,\ZZ)\cong \Lambda$
and $H^2(S^{[2]},\ZZ)\cong H^2(S,\ZZ)\oplus\ZZ(-2)$ for the Hilbert scheme $S^{[2]}$ of any K3 surface $S$, this can be
read as a lattice isomorphism $\lambda_1^\perp\cong H^2(S^{[2]},\ZZ)$.
\end{remark}

The discussion so far leads to the fundamental observation that there exists an isomorphism 
$$\bar \Gamma\supset\Gamma\cong A_2^\perp\subset \widetilde\Lambda$$
between the primitive cubic lattice $\Gamma$ and the lattice $A_2^\perp$ inside the Mukai lattice
$\widetilde\Lambda$.

For later use, we record that (\ref{eqn:embA2}) induces inclusions of index three:
$$A_2\oplus A_2(-1)\subset U_3\oplus U_4\text{ and }A_2\oplus A_2^\perp\subset\widetilde\Lambda,$$
where, for example, the quotient of the latter is generated by the image of the class
$(1/3)(\mu_1-\mu_2-\lambda_1+\lambda_2)=e_3+f_4$.

Another technical result that will be crucial at some point later, is the following elementary statement which
is surprisingly difficult to prove, cf.\ \cite[Prop.\ 3.2]{AT}.

\begin{lem}\label{lem:UA2AT}
Consider $A_2\subset \widetilde \Lambda$ as before, let  $U\,\hookrightarrow \widetilde \Lambda$ be an isometric embedding  of a copy of the hyperbolic plane, and
denote by $\overline{A_2+U}$  the saturation of $A_2+U\subset\widetilde\Lambda$. Then
there exists an isometric embedding of a copy of the hyperbolic plane $U'\,\hookrightarrow \overline{A_2+U}$ such that
$\rk(A_2+U')=3$.
\end{lem}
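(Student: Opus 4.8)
The plan is to reduce the statement to a finite, explicit case-check by controlling the possible relative positions of $U$ and $A_2$ inside $\widetilde\Lambda$. First I would set $M \coloneqq \overline{A_2+U}$, a saturated sublattice of $\widetilde\Lambda$ of rank at most $4$. If $\rk(A_2+U)=3$ we are already done by taking $U'=U$, so assume the generic case $\rk(A_2+U)=4$; then $M$ has rank $4$, signature $(2,2)$, and since $A_2$ is positive definite while $U$ is hyperbolic, $M$ is an even lattice of signature $(2,2)$ containing $A_2$ primitively (as $A_2$ was already primitive in $\widetilde\Lambda$ by the computation of $A_2^\perp$). The key point is that we are now working inside a rank-$4$ lattice, so the problem becomes lattice-theoretically finite.

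The main step is to find, inside $M$, a primitive isotropic vector $v$ that is \emph{not} orthogonal to all of $A_2$, and such that $v$ extends to a hyperbolic plane $U' = \langle v,w\rangle \subset M$ with $\langle A_2, v\rangle$ of rank $3$; then automatically $\rk(A_2+U')=3$ since $w$ lives in the rank-$4$ lattice $M$ and $A_2+v$ already spans a rank-$3$ subspace, forcing $w$ into that span modulo finite index — wait, that last inference is exactly the delicate point and must be argued carefully. More precisely, I would instead look for an isotropic plane: a rank-$2$ primitive isotropic sublattice $P\subset M$ with $P\cap A_2^{\perp_M}$ of rank $\le 1$. Given such a $P$, Eichler's criterion / the standard structure theory of even lattices of signature $(2,2)$ (which up to isometry are classified, e.g.\ $U\oplus U$, $U\oplus U(n)$, $U(2)\oplus U(2)$, etc., and in the cases relevant here are known explicitly since $M$ is glued from $A_2$ and $U$ with index at most a small number) lets one split off a hyperbolic plane $U'$ containing one isotropic line of $P$ while arranging the rank condition with $A_2$. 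Concretely, in the model where $A_2\hookrightarrow U_3\oplus U_4$ as in \eqref{eqn:embA2}, one can often take $U'$ to be a perturbation of one of the $U_i$ meeting $A_2$ in a rank-one sublattice.

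The honest approach, and the one I expect to actually work, follows \cite[Prop.\ 3.2]{AT}: enumerate the isometry classes of $M=\overline{A_2+U}$. Because $M$ is the saturation of $A_2 \oplus (U/\text{something})$ — an even signature-$(2,2)$ lattice receiving a primitive embedding of $A_2$ with $A_2^{\perp_M}$ of rank $2$ and small discriminant (the discriminant of $M$ divides $\mathrm{disc}(A_2)\cdot\mathrm{disc}(U)=3$ up to the square of the gluing index, so $\mathrm{disc}(M)\in\{1,3\}$ after saturation, hence $\mathrm{disc}(M)$ is $1$ or $3$) — there are only finitely many possibilities for $M$, and for each one a hyperbolic plane $U'$ with the required property can be exhibited by hand. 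The main obstacle, and the reason the lemma is ``surprisingly difficult,'' is precisely this: one cannot argue abstractly that \emph{every} hyperbolic plane in $M$ works — it is genuinely false that an arbitrary $U'$ gives $\rk(A_2+U')=3$ — so the proof must produce a \emph{specific} $U'$ in each of the finitely many cases, and verifying that the constructed $U'$ is (a) isometric to $U$, (b) primitively embedded, and (c) meets $A_2$ in exactly a rank-one sublattice requires a somewhat delicate discriminant-form bookkeeping rather than a one-line argument. I would therefore organize the proof as: (i) reduce to $\rk M=4$; (ii) bound $\mathrm{disc}(M)$ and list the isometry classes of $(M, A_2\hookrightarrow M)$; (iii) in each class write down $U'$ explicitly and check (a)–(c) by direct computation.
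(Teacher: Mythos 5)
The paper itself does not prove this lemma --- it defers entirely to \cite[Prop.\ 3.2]{AT} --- so there is no in-text argument to compare with; judged on its own terms, your proposal has a genuine gap at its linchpin. The claim that ${\rm disc}(M)$ for $M=\overline{A_2+U}$ divides ${\rm disc}(A_2)\cdot{\rm disc}(U)=3$ up to the square of a gluing index, hence lies in $\{1,3\}$, would be correct if $A_2+U$ were an \emph{orthogonal} direct sum; but it is only a direct sum of abelian groups. In the basis $\lambda_1,\lambda_2,e,f$ the Gram matrix of $A_2+U$ has an off-diagonal block $B$ of pairings between $A_2$ and $U$, and its determinant equals $\det(G_U)\cdot\det\bigl(G_{A_2}-BG_U^{-1}B^{T}\bigr)$, which is unbounded as $B$ varies. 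Concretely, take $U'=\langle e_1+e_3,\ f_1+e_4\rangle\subset\widetilde\Lambda$ (both vectors isotropic, pairing $1$): one checks directly that $A_2+U'$ has rank $4$, is already saturated in $\widetilde\Lambda$, and has Gram matrix $\left(\begin{smallmatrix}2&-1&0&1\\-1&2&1&-1\\0&1&0&1\\1&-1&1&0\end{smallmatrix}\right)$ of determinant $-4$. So discriminant $4$ already occurs, and arbitrarily large values occur as well. Since the discriminant of $M$ is unbounded, there is no finite list of isometry classes of pairs $(M,A_2\subset M)$, and step (iii) of your plan --- enumerate the cases and exhibit $U'$ by hand in each --- cannot be carried out. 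This is exactly why the lemma is ``surprisingly difficult'': no bounded invariant controls the relative position of $U$ and $A_2$, so any proof must produce $U'$ by a construction that works uniformly across this infinite family, e.g.\ by locating a primitive isotropic $z\in M$ with $(z.a)=\pm1$ for some $a\in A_2$ and taking $U'=\langle z,\ a-(a^2/2)z\rangle$; establishing the existence of such a $z$ is where the real work in \cite{AT} lies, and your sketch does not supply it.

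Two smaller points. First, your assertion that $M$ has signature $(2,2)$ in the rank-$4$ case is also false: $M\otimes\RR$ contains the positive-definite plane $A_2\otimes\RR$ together with the positive direction of $U$, so its signature is $(3,1)$ unless $A_2^{\perp}\cap(M\otimes\RR)$ happens to be negative definite --- already the orthogonal sum $A_2\oplus U$ has signature $(3,1)$ --- and the form on $M$ can in principle even be degenerate. Second, the sound parts of your write-up (reduction to the saturation $M$; the observation that $\rk(A_2+U')=3$ is equivalent to $U'\otimes\QQ$ meeting $A_2\otimes\QQ$ nontrivially, a two-dimensional intersection being impossible for signature reasons; the recognition that one must construct a \emph{specific} $U'$ rather than argue that an arbitrary one works) are correct starting points, but they do not bridge the gap left by the failed finiteness reduction.
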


\begin{proof} See \cite{AT} for the proof. 
\end{proof}

\begin{remark} To motivate the notion of Noether--Lefschetz (or Heegner) divisors for cubic fourfolds,
let us recall the corresponding concept for K3 surfaces: For a primitive class $\ell\in\Lambda$ with $(\ell)^2=d$, 
we write $$\Lambda_d\coloneqq\ell^\perp\subset\Lambda.$$ As $\ell$ is in the same $\OO(\Lambda)$-orbit
as the class $e_2+(d/2)\, f_2$, cf.\ \cite[Cor.\ 14.1.10]{HuyK3}, it can abstractly be described as
$$\Lambda_d\cong E\oplus U^{\oplus 2}\oplus \ZZ(-d).$$
It is important to note that the lattices $\Lambda_d$ are in general not contained in $A_2^\perp\subset \widetilde\Lambda$.
\end{remark}

We shall call any primitive vector $v\in\Gamma\cong A_2^\perp$ with $(v)^2<0$ a
\emph{Noether--Lefschetz} vector. With such  Noether--Lefschetz vector one
naturally associates two lattices. On the cubic side,
one defines $$\ZZ \, h^2\oplus \ZZ\, v\subset K_v\subset \bar\Gamma$$
as the saturation of $\ZZ \, h^2\oplus \ZZ \, v\subset\bar\Gamma$. On the K3 side,
we introduce the saturation
$$A_2\oplus \ZZ\,  v\subset L_v\subset \widetilde\Lambda.$$
Note that $L_v$ is of rank three and signature $(2,1)$, while $K_v$ is of rank two and signature $(0,2)$. Clearly, their respective
orthogonal complements are isomorphic: $$ \bar\Gamma\supset K_v^\perp \cong L_v^\perp\subset\widetilde\Lambda,$$ as they are both described as
$v^\perp \subset \Gamma\cong A_2^\perp$. In particular, for the discriminants we have
$$d\coloneqq {\rm disc}(L_v)={\rm disc}(K_v).$$
The situation has been studied in depth in \cite[Prop.\ 3.2.2]{HassComp}:

\begin{lem}[Hassett]\label{lem:HassettKv}
Only the following two cases can occur:
\begin{enumerate}
\item[{\rm (i)}] Either $\ZZ\, h^2\oplus \ZZ \, v=K_v$, $A_2\oplus \ZZ\,  v=L_v$,
and $$d={\rm disc}(K_v)={\rm disc}(L_v)=-3\,(v)^2\equiv 0\, (6)$$
\item[{\rm (ii)}] or $\ZZ\, h^2\oplus \ZZ \, v\subset K_v$, $A_2\oplus \ZZ\,  v\subset L_v$ are both of
index three, and
$$d={\rm disc}(K_v)={\rm disc}(L_v)=-\frac{1}{3}(v)^2\equiv 2\, (6).$$
\end{enumerate}
\end{lem}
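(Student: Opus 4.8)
The plan is to analyze the finite group $D := (\mathbb{Z}\, h^2 \oplus \mathbb{Z}\, v)^* / (\mathbb{Z}\, h^2 \oplus \mathbb{Z}\, v)$, respectively $D' := (A_2 \oplus \mathbb{Z}\, v)^* / (A_2 \oplus \mathbb{Z}\, v)$, and to observe that the saturation $K_v$ (resp. $L_v$) corresponds to an isotropic subgroup of this discriminant group. Since $\mathbb{Z}\, h^2$ has discriminant $3$ (recall $(h^2)^2 = -3$), while $A_2$ has discriminant $3$ as well, and $v$ is primitive with $(v)^2$ divisible by $2$ (as $\Gamma$ is even), both discriminant groups contain a natural $\mathbb{Z}/3$ summand coming from the first factor. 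The key point is that a primitive overlattice of $\mathbb{Z}\, h^2 \oplus \mathbb{Z}\, v$ inside $\bar\Gamma$ is governed by an isotropic subgroup of $D$ with respect to the discriminant quadratic form, and because $D$ has a $3$-torsion part of rank at most one on each side, the index of the saturation over the rank-two sublattice can only be $1$ or $3$.

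First I would compute the discriminant forms explicitly. On the cubic side, $\mathbb{Z}\, h^2 \oplus \mathbb{Z}\, v$ has discriminant $3 \cdot |(v)^2| = -3(v)^2$, with discriminant group $\mathbb{Z}/3 \oplus \mathbb{Z}/(v)^2$ (up to the usual refinements when $(v)^2$ shares a factor $3$). On the K3 side, $A_2 \oplus \mathbb{Z}\, v$ has discriminant $3 \cdot |(v)^2|$ as well. Since $K_v^\perp \cong L_v^\perp$ (both equal $v^\perp \subset \Gamma \cong A_2^\perp$), and since the discriminant of a primitive sublattice of a unimodular lattice equals that of its orthogonal complement up to sign, we get ${\rm disc}(K_v) = {\rm disc}(L_v) =: d$ immediately, as already noted in the excerpt. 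The remaining content is to pin down whether the index is $1$ or $3$ and to match this against $d$.

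The decisive step is to show that an index divisible by $3$ actually occurs if and only if $3 \mid (v)^2$, and that in that case the index is exactly $3$. For this I would argue that an isotropic vector of order $3$ in $D$ must have nontrivial component in the $\mathbb{Z}/3$ coming from $\mathbb{Z}\, h^2$ (value of the form on its generator is $-1/3 \bmod \mathbb{Z}$, equivalently $2/3$), hence it must be paired with a $3$-torsion element in the $\mathbb{Z}/(v)^2$ part to cancel, which is possible precisely when $3 \mid (v)^2$; the corresponding overlattice raises the minimal square value appropriately. A parallel computation on the K3 side with $A_2$ in place of $\mathbb{Z}\, h^2$ gives the same dichotomy (here one uses the explicit generators $\lambda_1, \lambda_2$ and the element $e_3 + f_4$ already exhibited in the excerpt as generating the index-three quotient of $A_2 \oplus A_2^\perp \subset \widetilde\Lambda$, which is the prototype of case (ii)). In case (i), $d = -3(v)^2$, which is $\equiv 0 \pmod 6$ since $(v)^2$ is even; in case (ii), passing to the index-three saturation divides the discriminant by $9$, giving $d = -3(v)^2/9 = -(v)^2/3$, and one checks this is $\equiv 2 \pmod 6$ using that $v$ primitive in the even lattice $\Gamma$ with $3 \mid (v)^2$ forces $(v)^2 \equiv 6 \pmod{18}$ — equivalently $(v)^2/3$ odd — so $-(v)^2/3 \equiv 2 \pmod 6$; conversely the congruence $\equiv 2\pmod 6$ excludes case (i).

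I expect the main obstacle to be the bookkeeping with the discriminant quadratic forms when $3 \mid (v)^2$, where the $3$-primary part of the discriminant group is no longer a clean $\mathbb{Z}/3 \oplus (\text{prime-to-}3)$ but rather $\mathbb{Z}/3 \oplus \mathbb{Z}/3^k$, and one must verify that there is still at most one isotropic line, so that the index cannot exceed $3$; this is exactly the subtle point where one invokes that $\bar\Gamma$ (resp. $\widetilde\Lambda$) is the ambient lattice and that $v$ is \emph{primitive} in $\Gamma$, not merely in $\bar\Gamma$. The congruence $(v)^2 \equiv 6 \pmod{18}$ in case (ii) — rather than just $(v)^2 \equiv 0 \pmod 6$ — is what ultimately produces the clean trichotomy $d \equiv 0$ vs. $d \equiv 2 \pmod 6$, with $d \equiv 4 \pmod 6$ never occurring; verifying this cleanly is the part I would be most careful about, and I would cross-check against Hassett's original argument in \cite[Prop.\ 3.2.2]{HassComp}.
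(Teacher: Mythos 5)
Your general framework (overlattices of $\ZZ\, h^2\oplus\ZZ\, v$ correspond to isotropic subgroups of the discriminant form) is a reasonable repackaging of what the paper does more directly, but the decisive arithmetic is both incomplete and, as stated, wrong. First, your justification that the index is at most $3$ --- that the $3$-torsion of the discriminant group has rank at most one --- fails exactly in the case that matters: when $3\mid (v)^2$ the $3$-part is $\ZZ/3\oplus\ZZ/3^k$, as you yourself concede at the end, and you leave the required uniqueness of the isotropic line unverified. The paper's argument here is elementary and sidesteps this: for $y=s\,h^2+t\,v\in K_v$ one has $(h^2.y)=-3s\in\ZZ$, hence $s\in\tfrac13\ZZ$ and then $t\in\tfrac13\ZZ$, so the index divides $9$; index $9$ would force $\tfrac13 h^2\in\bar\Gamma$, which is absurd.

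Second, and more seriously, the heart of the lemma is that in the index-three case $d=-\tfrac13 (v)^2\equiv 2\,(6)$, i.e.\ that both $d\equiv 4\,(6)$ and $d\equiv0\,(6)$ are excluded. You address this only by asserting $(v)^2\equiv 6\,(18)$, ``equivalently $(v)^2/3$ odd.'' This is incorrect: in case (ii) one has $6\mid (v)^2$, so $(v)^2/3$ is always even (and an odd $d$ could never satisfy $d\equiv 2\,(6)$ anyway); moreover $(v)^2\equiv 6\,(18)$ would give $d=-\tfrac13(v)^2\equiv 4\,(6)$ --- precisely the case that must be ruled out. The correct congruence is $(v)^2\equiv 12\,(18)$, and it does not come for free: the paper obtains $d\equiv 0,2\,(6)$ by exhibiting a basis $K_v=\ZZ\, h^2\oplus\ZZ\, x$ and computing $d=-3(x)^2-(x.h^2)^2$, and then excludes $d\equiv 0\,(6)$ in the index-three case by showing that $(x.h^2)\equiv0\,(3)$ would force $x\in\ZZ\, h^2\oplus\ZZ\, v$, contradicting the choice of $x$ (using the primitivity of $v$). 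Neither step appears in your proposal --- you only record the trivial converse that $d\equiv2\,(6)$ excludes case (i) and defer the rest to Hassett --- so the dichotomy of the lemma is not actually established.
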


\begin{proof}
The main ingredient is the standard formula, see e.g.\ \cite[Sec.\ 14.0.2]{HuyK3},
$${\rm disc}(K_v)\cdot[K_v: \ZZ\, h^2\oplus\ZZ\, v]^2={\rm disc}(\ZZ\, h^2\oplus\ZZ\, v)=-3\, (v)^2.$$
Any  $y\in K_v$ is of the form $y= s\, h^2+t\, v$, with $s,t\in \QQ$.
From $(h.y)\in\ZZ$ one concludes $s\in (1/3)\, \ZZ$ and hence also $t\in(1/3)\, \ZZ$. This shows
that $[K_v: \ZZ\, h^2\oplus\ZZ\, v]=1, 3$, or $=9$, but the last possibility is excluded as $(1/3)\,h^2\not\in\bar\Gamma$.

In the first case, i.e.\ $K_v=\ZZ\, h^2\oplus\ZZ\, v$, one finds
$d={\rm disc}(K_v)=-3\, (v)^2\equiv 0\, (6)$. In the second case, so when the index is three, then
$3\, d=-(v)^2\equiv 0,2,4\,(6)$. On the other hand, $K_v$ admits a basis consisting of 
$h^2$ and another class $x$. Indeed, pick any class $x\in K_v$ whose image generates the quotient $K_v/(\ZZ\, h^2\oplus\ZZ\, v)\cong\ZZ/3\ZZ$.
We may assume $3\, x=s\, h^2+t\, v$ with  $s,t=\pm 1$ and, therefore, $K_v=\ZZ\, h^2\oplus\ZZ\, x$. 
Hence, its discriminant satisfies $d=-3\, (x)^2-(x.h^2)^2\equiv0,2,3,5\,(6)$. Altogether this shows that $d\equiv 0,2\,(6)$.

We claim that $d\equiv0\,(6)$ holds if and only if $K_v=\ZZ\, h^2\oplus\ZZ\, v$.
The `if'-direction' was proven already. For the `only if'-direction, assume that $d\equiv0\,(6)$ but $[K_v: \ZZ\, h^2\oplus\ZZ\, v]=3$.
Pick $x\in K_v$ as above. Then, write  $v=s\, h^2+t\, x$, $s,t\in\ZZ$, and use $(v.h^2)=0$ and the primitivity of $v$
to show $v=r\, ((x.h^2)\, h^2+3\, x)$ with $r=\pm 1,\pm(1/3)$ as $v$ is primitive.
However, $(x.h^2)\equiv0\,(3)$ under the assumption that $d\equiv0\,(6)$. Hence, $\pm v=m\,h^2+x$, $m\in \ZZ$, and, therefore, $x\in \ZZ\, h^2\oplus \ZZ\, v$. This
yields a contradiction and thus proves the assertion.

The assertions for the lattice $L_v$ follows directly from the ones for $K_v$.
\end{proof}

\begin{remark}
Depending on the perspective, it may be useful to study the various cases from the point of view of  $d$ or, alternatively, of $(v)^2$. 
To have the results handy for later use, we restate the above discussion as
$$\begin{array}{lll}d\equiv 0\,(6)~Ê&\Rightarrow&~(v)^2=-d/3\equiv 0\, (6)\text{ or }\equiv \pm 2\, (6),\\
d\equiv 2\, (6)~&\Rightarrow&~Ê(v)^2=-3\,d\equiv0\,(6)\phantom{NMNMMMMHHGs}
\end{array}$$
and\,
$$\begin{array}{lll}
(v)^2\equiv \pm 2\,(6)~&\Rightarrow&~d=-3\,(v)^2\equiv0\,(6),\\
(v)^2\equiv 0\, (6)~&\Rightarrow&~d=-3\,(v)^2\equiv0\, (6)\text{ or } d=-(1/3)\, (v)^2\equiv 2\,(6).
\end{array}$$
In particular, $d$ determines $(v)^2$ uniquely, but not vice versa unless $(v)^2\equiv \pm 2\,(6)$.
\end{remark}

\begin{prop}[Hassett]\label{prop:HassettEichler}
Let $v,v'\in\Gamma$ be two primitive vectors  and assume that ${\rm disc}(L_v)={\rm disc}(L_{v'})$
or, equivalently, ${\rm disc}(K_v)={\rm disc}(K_{v'})$. Then there exist an
orthogonal transformations $g\in \tilde\OO(\Gamma)$ 
such that $g(v)=\pm v'$ and, in particular,
$$L_{v'}\cong L_{g(v)}\text{ and }K_{v'}\cong K_{g(v)}.$$
\end{prop}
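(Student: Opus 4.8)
The plan is to apply Eichler's criterion for the orthogonal group of a lattice containing $U^{\oplus 2}$ as an orthogonal summand. Recall that Eichler's theorem states: if a lattice $L$ contains $U^{\oplus 2}$ as a direct summand, then the subgroup $\widetilde{\OO}(L)$ of isometries acting trivially on the discriminant group $L^*/L$ acts transitively on the set of primitive vectors $v \in L$ of fixed square $(v)^2$ and fixed image in $L^*/L$ (the latter being well-defined up to the action). First I would observe that $\Gamma \cong E \oplus U^{\oplus 2} \oplus A_2(-1)$, which indeed contains $U^{\oplus 2}$ as an orthogonal summand, so Eichler's criterion applies to $\Gamma$.

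The reduction then proceeds as follows. By the Remark following Lemma~\ref{lem:HassettKv}, the discriminant $d$ determines $(v)^2$ uniquely \emph{except} when $(v)^2 \equiv \pm 2\,(6)$ — but in the exceptional range one still has $d = -3(v)^2$, so the condition ${\rm disc}(L_v) = {\rm disc}(L_{v'})$ forces either $(v)^2 = (v')^2$ outright, or the two possible squares differ and are distinguished by the congruence class $d \bmod 6$; a short case analysis shows $(v)^2 = (v')^2$ in all cases once $d$ is fixed (this is exactly what is encoded in the Remark's tables). So without loss of generality $(v)^2 = (v')^2$. Next I would check that $v$ and $v'$ have the same image in the discriminant group $\Gamma^*/\Gamma \cong \ZZ/3\ZZ$ up to sign: since $\Gamma^*/\Gamma$ is cyclic of order $3$, the only nonzero values of the induced $\QQ/2\ZZ$-valued quadratic form on $\ZZ/3\ZZ$ are interchanged by $x \mapsto -x$, so after possibly replacing $v'$ by $-v'$ (which is harmless, as the conclusion allows $g(v) = \pm v'$) one arranges that $v$ and $v'$ have equal image in $\Gamma^*/\Gamma$. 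Then Eichler's criterion produces $g \in \widetilde{\OO}(\Gamma) = \tilde\OO(\Gamma)$ with $g(v) = v'$, hence $g(v) = \pm v'$ in general. The isomorphisms $L_{v'} \cong L_{g(v)}$ and $K_{v'} \cong K_{g(v)}$ are then immediate: $g$ extends to an isometry of $\bar\Gamma$ (acting as the identity on $\ZZ h^2$, since $h^2 \in \Gamma^\perp \subset \bar\Gamma$ and $g$ is trivial on the discriminant, so it glues), carrying the saturation of $\ZZ h^2 \oplus \ZZ v$ to that of $\ZZ h^2 \oplus \ZZ v'$ up to sign; similarly $g$ extends to an isometry of $\widetilde\Lambda$ fixing $A_2$ pointwise and carrying $L_v$ to $L_{v'}$.

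The main obstacle I anticipate is the bookkeeping in matching the discriminant-form data: one must verify carefully that equality of discriminants of $L_v$ (equivalently $K_v$) really does pin down both $(v)^2$ and the discriminant-group image of $v$ up to sign, which requires threading through the two cases of Lemma~\ref{lem:HassettKv} and being attentive to the distinction between $(v)^2 \equiv 0\,(6)$ (where the same $d$ arises from two different squares, resolved by whether we are in case (i) or (ii)) and $(v)^2 \equiv \pm 2\,(6)$. Once that combinatorial point is settled, the rest is a direct invocation of Eichler's theorem and a standard gluing argument for extending isometries across the orthogonal decompositions $\bar\Gamma = \ZZ h^2 \oplus \Gamma$ and $\widetilde\Lambda = A_2 \oplus A_2^\perp$; no genuinely hard input beyond Eichler's criterion is needed.
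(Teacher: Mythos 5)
Your overall strategy is the paper's: apply Eichler's criterion to $\Gamma\cong E\oplus U^{\oplus 2}\oplus A_2(-1)$, match the invariants, and glue the resulting isometry across $\bar\Gamma=\ZZ\,h^2\oplus\Gamma$ and $\widetilde\Lambda\supset A_2\oplus A_2^\perp$. The endgame (extension of $g$) is fine, and matching $(v)^2$ via the Remark after Lemma~\ref{lem:HassettKv} is legitimate. But there is a genuine gap in the step where you match the second Eichler invariant. That invariant is the class $(1/n)\,\bar v\in A_\Gamma\cong\ZZ/3\ZZ$, where $n$ is defined by $(v.\Gamma)=n\,\ZZ$. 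Your argument --- that the two nonzero elements of $\ZZ/3\ZZ$ are swapped by $x\mapsto -x$, so a sign change on $v'$ fixes everything --- only covers the case where \emph{both} $v$ and $v'$ have nontrivial class (i.e.\ $n=n'=3$). It does not exclude the scenario $n=1$, $n'=3$, where no sign change can reconcile the invariants. And this scenario is not vacuous at the level of the square: when $(v)^2\equiv 0\,(6)$, both $n=1$ and $n=3$ genuinely occur for primitive vectors of the same square (they produce $d=-3(v)^2$ and $d=-(1/3)(v)^2$ respectively). So the substance of the proof is to show that the \emph{discriminant} $d$, not merely $(v)^2$, determines $n$.

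That is exactly what the paper's proof spends its two cases on: if $d\equiv 2\,(6)$, non-saturation of $\ZZ\,h^2\oplus\ZZ\,v$ gives an element $(1/3)\,v+t\,h^2\in\bar\Gamma$, forcing $(v.\Gamma)\subset 3\,\ZZ$, hence $n=3$ and $(1/3)\,\bar v=\pm1$; if $d\equiv 0\,(6)$, one writes $v=n_1v_1+n_2v_2$ with $v_1$ in the unimodular summand $E\oplus U_1\oplus U_2$ and $v_2\in A_2(-1)$ and shows $n_1\not\equiv0\,(3)$ (else $\ZZ\,v\oplus\ZZ\,h^2$ would fail to be saturated), whence $(v.\Gamma)=\ZZ$ and the class is trivial. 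You explicitly flag this verification as ``the main obstacle'' but do not carry it out, and your parenthetical (``the same $d$ arises from two different squares'') has the logic reversed --- it is the same square that arises from two different $d$'s. Supplying the saturation-versus-divisibility equivalence is the missing content; everything else in your outline is correct.
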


The definition of $\tilde\OO(\Gamma)$ will be recalled below.

\begin{proof} We apply Eichler's criterion, cf.\ \cite[Prop.\ 3.3]{Hulek}. If an even lattice $N$ is of the form $N\cong N'\oplus U^{\oplus 2}$, then
a primitive vector $v\in N$ with prescribed $(v)^2\in\ZZ$ and $(1/n)\,\bar v\in A_N$, with $n$ determined by $(v.N)=n\,\ZZ$, is unique up to
the action of $\tilde\OO(N)$. Apply this to $v\in \Gamma\cong A_2^\perp \cong E\oplus U^{\oplus 2}\oplus A_2(-1)$ and use that for any primitive $v\in \Gamma$, either $(v.\Gamma)=\ZZ$ or $=3\,\ZZ$. This follows from $[\bar\Gamma:\Gamma\oplus \ZZ\, h^2]=3$ and the unimodularity of $\bar\Gamma$.

(i) If $(v)^2\equiv 0\,(6)$, there are two cases: Assume first that $d\equiv2\,(6)$ or, equivalently, that $\ZZ\,v\oplus\ZZ\, h^2$  is not saturated.
Then, one finds an element of the form $\alpha\coloneqq (1/3)\, v+ t\, h^2\in \bar\Gamma$. As $(\alpha.w)\in\ZZ$ for
all $w\in\Gamma$, this shows $(v.\Gamma)\subset 3\,\ZZ$. Hence, $n=3$ and $(1/3)\, \bar v=\pm 1\in A_\Gamma\cong\ZZ/3\,\ZZ$.

Assume now that $d\equiv0\,(6)$ and write $v=n_1v_1+ n_2 v_2$ with $v_1\in E\oplus U_1\oplus U_2$ and $v_2\in A_2(-1)$, both primitive, and
$n_1,n_2\in\ZZ$. If $n_1\not\equiv0\,(3)$, then there exists a class $w$ in the unimodular lattice
$E\oplus U_1\oplus U_2\subset\Gamma$ with $(v.w)\not\in 3\, \ZZ$ and hence $(v.\Gamma)=\ZZ$. If $n_1\equiv 0\, (3)$, then
$n_2\not\equiv0\, (3)$, as $v$ is primitive. However, in this case $(1/3)\,(v\pm h^2)=(n_1/3) v_1+(1/3)\,(n_2 v_2\pm  h^2)\in \bar\Gamma$ and so $\ZZ\, v\oplus\ZZ\,h^2$ is not saturated, contradicting $d\equiv0\,(6)$.

\smallskip

(ii) If $(v)^2\equiv \pm2\,(6)$ and hence $(v)^2\not\equiv 0\,(3)$, then $(v.\Gamma)=\ZZ$, $n=1$, and $\bar v\in A_\Gamma$ is trivial.

Hence, in case (i) and (ii), if indeed $d$ and not only $(v)^2$ is fixed, then $(v)^2=(v')^2$ and
$(1/n)\,\bar v=(1/n)\,\bar v'\in A_\Gamma$  (up to sign).
\end{proof}

\begin{remark}\label{rem:epxlitchoices}
Due to the uniqueness, no information is lost when explicit classes $v\in \Gamma\cong A_2^\perp$ are chosen for any given $d$.
In the sequel, we will work with the following ones.

(i) For $d\equiv0\,(6)$, one may choose $v_d\coloneqq e_1-(d/6)\,f_1\in U_1\subset\Gamma$. Observe that indeed,
as explained in the general context above,  $(v_d)^2=-d/3$ and that the lattice $A_2\oplus\ZZ\, v_d$ is saturated
(use $A_2\subset U_2\oplus U_3$ and $v_d\in U_1$), i.e.
$$L_d\coloneqq L_{v_d}=A_2\oplus\ZZ\, v_d.$$ Similarly,
$$K_d\coloneqq K_{v_d}=\ZZ\, h^2\oplus\ZZ\, v_d,$$
which again shows $(v_d)^2=-d/3$. Their orthogonal
complement is
$$\Gamma_d\coloneqq L_d^\perp\cong K_d^\perp \cong E\oplus U_2\oplus A_2(-1)\oplus \ZZ\,( e_1+(d/6)\, f_1)$$
and their discriminant group $$A_{K_d^\perp}\cong A_{K_d}\cong \ZZ/3\ZZ\oplus \ZZ/(d/3)\ZZ$$ is cyclic if and only if $9\nmid d$.

(ii)  For $d\equiv 2\,(6)$, one sets $v_d\coloneqq 3\,(e_1-((d-2)/6)\, f_1)+\mu_1-\mu_2\in U_1\oplus A_2(-1)$. Then both inclusions
$$A_2\oplus\ZZ\, v_d\subset L_d\coloneqq L_{v_d}\text{ and } \ZZ\, h^2\oplus\ZZ\, v_d\subset K_d\coloneqq K_{v_d}$$
are of index three, for example $v_d-\lambda_1+\lambda_2$  and $v_d-h^2$ are divisible by $3$. Use $\lambda_1=e_4-f_4$, $\lambda_2=e_3+f_3+f_4$, $\mu_1=e_3-f_3$, and $\mu_2=-e_3-e_4-f_4$, the latter corresponding to $(1,-1,0),(0,1,-1)\in\ZZ^{\oplus 3}$.
In this case, see \cite{AddCub,HassComp,TanVar}, $$\Gamma_d\coloneqq L_d^\perp\cong K_d^\perp\cong E\oplus U_2\oplus
(\ZZ^{\oplus 3}, (~.~)_A)\text{ with }A\coloneqq\left(\begin{matrix}-2&1&0\\
1&-2&1\\
0&1&(d-2)/3\end{matrix}\right)$$
and $L_d$ and $K_d$ are given by the matrices $-A$ and
$$\left(\begin{matrix}-3&1\\
1&-(d+1)/3\end{matrix}\right),$$
respectively.
The discriminant groups for $d\equiv2\, (6)$ are cyclic, indeed $A_{K_d^\perp}\cong A_{K_d}\cong\ZZ/d\ZZ$.
\end{remark}

In addition to the orthogonal group
\begin{equation}\label{eqn:tildeOO}
\tilde \OO(\Gamma)\coloneqq\{~ g\in\OO(\bar\Gamma)\mid g(h^2)=h^2~\},
\end{equation}
which we will also think of as $\tilde \OO(\Gamma)=\{~g\in \OO(\Gamma)\mid \bar g\equiv {\rm id} \text{ on } A_\Gamma~\}$,
we need to consider
$$\begin{array}{l}
\tilde \OO(\Gamma,K_d)\coloneqq\{~g\in\tilde\OO(\Gamma)\mid g(K_d)=K_d, \text{i.e. } 
g(v_d)=\pm v_d~\}\\
\phantom{HH}\bigcup\\
~~~ \tilde\OO(\Gamma,v_d)\coloneqq\{~g\in\tilde\OO(\Gamma)\mid  g|_{K_d}={\rm id}, \text{i.e. }
 g(v_d)=v_d ~ \}.
\end{array}$$
Observe that  $ \tilde\OO(\Gamma,v_d)$ can be identified with the subgroup of all $g\in\OO(\Gamma_d)$ with
trivial action on the discriminant group $A_{\Gamma_d}\cong A_{K_d}$. Also, by definition, $ \tilde{\rm O}(\Gamma,v_d)\subset \tilde\OO(\Gamma,K_d)$ is a subgroup of index one or two.
Note that the natural homomorphism  $\tilde\OO(\Gamma,K_d)\to \OO(K_d)$ is  neither surjective (let alone injective) nor is its image contained in 
the subgroup of transformations acting trivially on the discriminant $\tilde\OO(K_d)$.

\begin{lem}[Hassett]\label{lem:KDOO} The subgroup $\tilde\OO(\Gamma,v_d)\subset \tilde \OO(\Gamma,K_d)$ is of index at most two. More precisely, one distinguishes the following cases:
\begin{enumerate}
\item[{\rm (i)}]  If $d\equiv0\,(6)$, then $$\tilde\OO(\Gamma,v_d)\subset \tilde \OO(\Gamma,K_d)$$  has index two.
\item[{\rm (ii)}] If $d\equiv2\,(6)$, then $$\tilde\OO(\Gamma,v_d)= \tilde \OO(\Gamma,K_d).$$
\end{enumerate}
\end{lem}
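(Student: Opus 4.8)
The plan is to analyze the quotient $\tilde\OO(\Gamma,K_d)/\tilde\OO(\Gamma,v_d)$ by understanding the action of $\tilde\OO(\Gamma,K_d)$ on $K_d$, and more precisely on the primitive vector $v_d$ itself. By the definitions recalled just above, an element $g\in\tilde\OO(\Gamma,K_d)$ sends $v_d$ to $\pm v_d$, and $g\in\tilde\OO(\Gamma,v_d)$ precisely when the sign is $+$. Thus there is an exact sequence
\begin{equation*}
1\xymatrix@1@=15pt{\ar[r]&}\tilde\OO(\Gamma,v_d)\xymatrix@1@=15pt{\ar[r]&}\tilde\OO(\Gamma,K_d)\xymatrix@1@=15pt{\ar[r]&}\{\pm1\},
\end{equation*}
so the index is one or two, and it is exactly two if and only if the map $-v_d$ is realized by some $g\in\tilde\OO(\Gamma,K_d)$, i.e.\ by some $g\in\OO(\bar\Gamma)$ fixing $h^2$, acting trivially on $A_\Gamma$, and sending $v_d\mapsto -v_d$. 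So the entire problem reduces to deciding, in each residue class of $d$, whether such a reflection-type isometry exists.

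Next I would construct the candidate isometry directly using the explicit models of Remark~\ref{rem:epxlitchoices}. In case (i), $d\equiv0\,(6)$, we have $v_d=e_1-(d/6)f_1\in U_1$ with $\ZZ\,h^2\oplus\ZZ\,v_d=K_d$ and $A_2\oplus\ZZ\,v_d=L_d$ both saturated, and $h^2$ lies in $\bar\Gamma\ominus\Gamma$ while $v_d$ lies in the summand $U_1\subset\Gamma$. Then $-\id$ on $U_1$ extended by the identity on $E\oplus U_2\oplus A_2(-1)$ and on the ${\rm I}_{0,3}$-summand containing $h^2$ gives an isometry $g\in\OO(\bar\Gamma)$ with $g(h^2)=h^2$ and $g(v_d)=-v_d$; one checks it acts trivially on $A_\Gamma\cong\ZZ/3$ because the discriminant form of $A_\Gamma$ is supported on the $A_2(-1)$-summand, which $g$ fixes. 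Hence $g\in\tilde\OO(\Gamma,K_d)\setminus\tilde\OO(\Gamma,v_d)$, giving index two.

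In case (ii), $d\equiv2\,(6)$, I expect the opposite: no such $g$ exists, so the two groups coincide. The cleanest way to see this is via the discriminant form. Any $g\in\tilde\OO(\Gamma,K_d)$ restricts to $\OO(\Gamma_d)$ and, being in $\tilde\OO(\Gamma)$, acts trivially on $A_\Gamma$; a short glueing/discriminant argument (using $\Gamma\subset K_d\oplus\Gamma_d$ of index determined by $\mathrm{disc}(K_d)=d$, together with $A_{K_d}\cong\ZZ/d\ZZ$ from Remark~\ref{rem:epxlitchoices}(ii)) pins down the induced action of $g$ on $A_{K_d}\cong\ZZ/d\ZZ$. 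Since $K_d$ has signature $(0,2)$ and a cyclic discriminant group of order $d$ with $d$ odd in no residue—wait, $d\equiv2\,(6)$ so $d$ is even—the relevant point is rather that the isometry $-\id$ of $K_d$ acts as $-\id$ on $\ZZ/d\ZZ$, which is trivial on $A_{K_d}$ only if $d\mid 2$, hence never under our hypotheses except trivial cases; combined with the constraint that $g$ must act trivially on $A_\Gamma$ and compatibly under the glueing, this forces $g$ to act trivially, not by $-\id$, on $K_d/\langle\text{torsion}\rangle$ in the direction of $v_d$, i.e.\ $g(v_d)=v_d$. I would phrase this as: the only element of $\OO(K_d)$ that is compatible with an isometry of $\Gamma$ acting trivially on $A_\Gamma$ and sending $v_d\mapsto\pm v_d$ is the one with $g(v_d)=v_d$, because $-\id_{K_d}$ fails to act trivially on $A_{K_d}\cong\ZZ/d\ZZ$ when $d>2$ and the glueing forces agreement of the induced actions on the (anti-isometric) discriminant groups $A_{K_d}$ and $A_{\Gamma_d}$.

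The main obstacle will be case (ii): establishing rigorously that no isometry in $\tilde\OO(\Gamma,K_d)$ can flip the sign of $v_d$. The subtlety is that $g$ need not act on $K_d$ by $\pm\id$ on the nose—it sends $v_d\mapsto\pm v_d$ but may move $h^2$ around inside $K_d\otimes\QQ$ (though it must fix $h^2$ as an element of $\bar\Gamma$, which actually rigidifies this). The safe route is to track the action on $A_{K_d}$: fix a generator, express the discriminant-group images of $v_d$ and $h^2$ using the explicit matrices $\left(\begin{matrix}-3&1\\1&-(d+1)/3\end{matrix}\right)$ for $K_d$, and observe that $g(h^2)=h^2$ together with $g(v_d)=-v_d$ would force the action on $A_{K_d}$ to be nontrivial (multiplication by $-1$ on the part generated by the image of $v_d$), contradicting triviality on $A_\Gamma\cong A_{\Gamma_d}$ via the standard anti-isometry $A_{K_d}\cong A_{\Gamma_d}$. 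Getting the bookkeeping of this anti-isometry exactly right—signs and generators—is the one genuinely fiddly computation, but it is finite and explicit given Remark~\ref{rem:epxlitchoices}.
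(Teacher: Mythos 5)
Your reduction to the existence of a $g\in\tilde\OO(\Gamma,K_d)$ with $g(v_d)=-v_d$, and your treatment of case (i), are fine and coincide with the paper's argument: the paper takes exactly the isometry that is $-\id$ on $U_1$ and the identity on $E\oplus U_2\oplus{\rm I}_{0,3}$ (note that in the $\bar\Gamma$-model the relevant $A_2(-1)$ sits inside ${\rm I}_{0,3}$ as $(h^2)^\perp$, so there is no separate summand to worry about).

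Case (ii) is where there is a genuine gap. Your contradiction rests on the claim that any $g\in\tilde\OO(\Gamma,K_d)$ acts trivially on $A_{\Gamma_d}$, which you justify by writing ``triviality on $A_\Gamma\cong A_{\Gamma_d}$''. That identification is false: $A_\Gamma\cong\ZZ/3\ZZ$ while $A_{\Gamma_d}\cong A_{K_d}\cong\ZZ/d\ZZ$, and triviality on $A_\Gamma$ does not propagate to $A_{\Gamma_d}$. Indeed, your own element from case (i) ($-\id$ on $U_1$) lies in $\tilde\OO(\Gamma)$ but acts by $-1$ on the $\ZZ/(d/3)$-summand of $A_{\Gamma_d}$ generated by $e_1+(d/6)f_1$, so the implication you need fails in general. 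Worse, triviality of the action on $A_{\Gamma_d}$ is (via the identification $\tilde\OO(\Gamma,v_d)\cong\{g\in\OO(\Gamma_d)\mid \bar g=\id \text{ on } A_{\Gamma_d}\}$ recalled before the lemma) essentially equivalent to the statement $g\in\tilde\OO(\Gamma,v_d)$ that you are trying to prove, so assuming it is circular. The actual obstruction is more elementary and does not involve discriminant groups at all: for $d\equiv2\,(6)$ the lattice $K_d$ strictly contains $\ZZ\,h^2\oplus\ZZ\,v_d$ with index three, with $(1/3)(v_d-h^2)\in K_d$. A hypothetical $g$ with $g(h^2)=h^2$ and $g(v_d)=-v_d$ would send this integral class to $(1/3)(-v_d-h^2)$, whose ${\rm I}_{0,3}$-component $\frac{1}{3}(-2,1,-2)$ is not integral, so it does not lie in $\bar\Gamma$, let alone $K_d$. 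In other words, ${\rm diag}(1,-1)$ on $\ZZ\,h^2\oplus\ZZ\,v_d$ simply does not extend to an isometry of the overlattice $K_d$ -- there is no well-defined action on $A_{K_d}$ to analyze. This one-line check is the paper's proof of (ii), and it is what your ``fiddly computation'' should be replaced by.
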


\begin{proof}
(i) According to Lemma \ref{lem:HassettKv}, $d\equiv 0\, (6)$ if and only if $\ZZ\, h^2\oplus \ZZ\, v_d=K_d$, which is contained in ${\rm I}_{0,3}\oplus U_1$.
Let $g\in \tilde\OO(\Gamma)$ be the orthogonal transformation defined by $g={\rm id}$ on $E\oplus U_2\oplus {\rm I}_{0,3}$ and by $g=-{\rm id}$
on $U_1$. Then $g$ is an element in $ \tilde \OO(\Gamma,K_d)\setminus\tilde\OO(\Gamma,v_d)$.

(ii) Now, $d\equiv 2\, (6)$ if and only if $\ZZ\, h^2\oplus \ZZ\, v_d\subset K_d$  has index three
and then $v_d=3\,(e_1-((d-2)/6)\, f_1)+\mu_1-\mu_2$ with $\mu_1=(1,-1,0),\mu_2=(0,1,-1)\in A_2(-1)\subset {\rm I}_{0,3}$ and $h^2=(1,1,1)$.
Now observe that $(1/3)\,(v_d-h^2)\in K_d$,  but $(1/3)\,(-v_d-h^2)\not\in K_d$. 
\end{proof}


\subsection{}  It turns out that certain geometric properties of cubic fourfolds
are encoded by lattice-theoretic properties of  Noether--Lefschetz vectors $v\in\Gamma$.
The following ones are relevant for our purposes. It is a matter of choice, whether they are read
as  conditions on $d$ or on the primitive $v\in\Gamma$. For $d\in \ZZ$ one considers the conditions:\\
$$\begin{array}{ccl}
(\ast)&  \Leftrightarrow&\text{There exists an }L_d.\\[1ex]
(\ast\ast')& \Leftrightarrow&\text{There exists an } L_d \text{ and an embedding }U(n)\,\hookrightarrow L_d\text{ for some }n\ne0.\\[1ex]
(\ast\ast)& \Leftrightarrow&\text{There exists an } L_d\text{ and a primitive embedding }U\,\hookrightarrow L_d.\\[1ex]
(\ast\!\ast\!\ast)& \Leftrightarrow&\text{There exists an } L_d \text{ and a primitive embedding }U\,\hookrightarrow L_d\text{ with }\lambda_1\in U.
\end{array}$$
\medskip

\begin{remark}
\vskip-0.3cm
\smallskip
(i) The following implications trivially hold
$$(\ast\!\ast\!\ast)\Rightarrow(\ast\ast)\Rightarrow(\ast\ast')\Rightarrow(\ast).$$

(ii) Each of the conditions in fact splits in two, distinguishing between $d\equiv 0\, (6)$ and $d\equiv 2\, (6)$. We shall
write accordingly $(\ast)_0$, $(\ast)_2$, $(\ast\ast')_0$,   $(\ast\ast')_2$, etc.
\end{remark}


\begin{lem}\label{lem:OOLamL}
Condition $(\ast\ast)$ holds if and only if  there exists an isomorphism of lattices
$$\varepsilon\colonÊ\Gamma_d\congpf\Lambda_d.$$
In this case, one also has an  isomorphism of groups
$$\tilde\OO(\Gamma,v_d)\cong\tilde\OO(\Lambda_d).$$

\end{lem}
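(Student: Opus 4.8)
The plan is to route everything through the single equivalence
$$(\ast\ast)\ \Longleftrightarrow\ L_d\cong U\oplus\ZZ(d).$$
Indeed, a primitive embedding $U\hookrightarrow L_d$ of the unimodular lattice $U$ automatically splits off as an orthogonal direct summand, so it forces $L_d\cong U\oplus\ZZ\,w$ with $\ZZ\,w$ of rank one; comparing signatures $(2,1)=(1,1)+(1,0)$ shows $w^2>0$, and $|{\rm disc}(L_d)|=|w^2|$ (as $U$ is unimodular) then gives $w^2=d$. Conversely, $U\oplus\ZZ(d)$ trivially contains $U$ as a primitive sublattice. So the lattice assertion of the lemma reduces to the equivalence $L_d\cong U\oplus\ZZ(d)\Leftrightarrow\Gamma_d\cong\Lambda_d$.

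For the implication $L_d\cong U\oplus\ZZ(d)\Rightarrow\Gamma_d\cong\Lambda_d$, note that $L_d=L_{v_d}\subset\widetilde\Lambda$ is saturated by construction, hence the distinguished copy of $U$ is primitive in $\widetilde\Lambda$. Being unimodular it splits off: $\widetilde\Lambda=U\oplus U^\perp$, where $U^\perp$ is an even unimodular lattice of signature $(3,19)$, hence $U^\perp\cong\Lambda$. The generator $w$ of the $\ZZ(d)$-summand lies in $U^\perp$, is primitive there, and satisfies $w^2=d$; moreover $\Gamma_d=L_d^\perp$ is precisely the orthogonal complement of $w$ computed inside $U^\perp\cong\Lambda$. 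Since all primitive vectors of square $d$ in $\Lambda$ form a single $\OO(\Lambda)$-orbit (cf.\ \cite[Cor.\ 14.1.10]{HuyK3}), this gives $\Gamma_d\cong\Lambda_d$.

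For the converse $\Gamma_d\cong\Lambda_d\Rightarrow L_d\cong U\oplus\ZZ(d)$, recall that since $\widetilde\Lambda$ is unimodular and $\Gamma_d=L_d^\perp$, one has $A_{L_d}\cong A_{\Gamma_d}$ with $q_{L_d}=-q_{\Gamma_d}$. From $\Lambda_d\cong E\oplus U^{\oplus2}\oplus\ZZ(-d)$ the form $q_{\Lambda_d}=q_{\Gamma_d}$ is the discriminant form of $\ZZ(-d)$, so $q_{L_d}$ is the discriminant form of $\ZZ(d)$, which coincides with that of $U\oplus\ZZ(d)$. Thus $L_d$ and $U\oplus\ZZ(d)$ are even lattices sharing the same signature $(2,1)$ and discriminant form, i.e.\ they lie in the same genus; moreover the hypothesis forces $A_{\Gamma_d}\cong A_{\Lambda_d}\cong\ZZ/d\ZZ$ to be cyclic, so the discriminant group of $L_d$ has length one while $\rk(L_d)=3\ge 2+1$. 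Nikulin's uniqueness theorem (an even indefinite lattice with $\rk\ge 2+\ell(A)$ is determined by its signature and discriminant form) then yields $L_d\cong U\oplus\ZZ(d)$, and hence $(\ast\ast)$ holds.

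For the group statement, use the identification recalled just before the lemma: restriction to $\Gamma_d=v_d^\perp$ identifies $\tilde\OO(\Gamma,v_d)$ with the subgroup $\tilde\OO(\Gamma_d)\subset\OO(\Gamma_d)$ of isometries acting trivially on $A_{\Gamma_d}$. An isometry $\varepsilon\colon\Gamma_d\to\Lambda_d$ induces an isomorphism $A_{\Gamma_d}\cong A_{\Lambda_d}$, so conjugation by $\varepsilon$ carries $\tilde\OO(\Gamma_d)$ isomorphically onto $\tilde\OO(\Lambda_d)$, whence $\tilde\OO(\Gamma,v_d)\cong\tilde\OO(\Lambda_d)$. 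The only non-formal step is the converse implication above: it rests on Nikulin's genus-uniqueness for the rank-three indefinite lattice $L_d$, which applies precisely because the hypothesis $\Gamma_d\cong\Lambda_d$ makes $A_{\Gamma_d}$, hence $A_{L_d}$, cyclic --- note that for $d\equiv0\,(6)$ with $9\mid d$ the group $A_{K_d}\cong A_{\Gamma_d}$ has length two, so there neither side of the equivalence can hold. One must also keep careful track of the signs of discriminant forms, the comparison being governed by the identity $q_{L_d}=-q_{\Gamma_d}$.
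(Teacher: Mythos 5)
Your proof is correct, and the forward direction (split off the unimodular $U$, identify $U^\perp\cong\Lambda$, recognize $\Gamma_d$ as the complement of a primitive vector of square $d$ and invoke Eichler) is essentially the paper's argument, as is the treatment of the group isomorphism via the identification of $\tilde\OO(\Gamma,v_d)$ with the isometries of $\Gamma_d$ acting trivially on $A_{\Gamma_d}$. Where you genuinely diverge is in the converse. The paper argues on the corank-three side: it regards $\Gamma_d\cong\Lambda_d$ as giving two primitive embeddings $\Lambda_d\hookrightarrow\widetilde\Lambda$ (one with complement $L_d$, one with complement containing $U_4$), and quotes the uniqueness of such a primitive embedding up to $\OO(\widetilde\Lambda)$ to conclude $U\subset L_d$. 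You instead work directly with the rank-three lattice $L_d$: you compute its discriminant form from $q_{L_d}=-q_{\Gamma_d}$, place it in the genus of $U\oplus\ZZ(d)$, and apply Nikulin's uniqueness criterion $\rk\geq \ell(A)+2$, which is applicable precisely because $A_{\Gamma_d}\cong\ZZ/d\ZZ$ is cyclic. Both arguments ultimately rest on Nikulin's theory, but yours has the merit of isolating exactly where the hypothesis enters --- the cyclicity of the discriminant group --- which transparently explains the failure for $9\mid d$ and matches the numerical condition $(\ref{eqn:A2prim})$; the paper's version is shorter because it reduces to a single quotable embedding-uniqueness statement. One small point of care you handled correctly but that is worth flagging: the identities $L_d=\Gamma_d^\perp$ and $q_{L_d}=-q_{\Gamma_d}$ require $L_d$ to be saturated in $\widetilde\Lambda$, which holds by its definition as a saturation.
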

\begin{proof}
Assume that there exists a (primitive) hyperbolic plane $U\,\hookrightarrow L_d$. As the composition with the inclusion
$L_d\subset \widetilde\Lambda$ can  be identified with $U_4\,\hookrightarrow \widetilde\Lambda$ up to the action of $\OO(\widetilde\Lambda)$, 
see \cite[Thm.\ 14.1.12]{HuyK3}, one has $U^\perp\cong\Lambda$. Hence, $\Gamma_d=L_d^\perp \subset U^\perp\cong\Lambda$ is a primitive sublattice of corank one, signature $(2,19)$,
discriminant $d$, and is, therefore, isomorphic to $\Lambda_d$. Conversely, if $L_d^\perp= \Gamma_d\cong\Lambda_d\subset \Lambda\subset\widetilde\Lambda$,
then $U_4\subset L_d$. Here, one again uses that up to $\OO(\widetilde\Lambda)$, there exists only one primitive embedding $\Lambda_d\,\hookrightarrow\widetilde\Lambda$. 

For the isomorphism between the two orthogonal groups, just recall that they are both described as the subgroup of all orthogonal
transformations of $\Gamma_d\cong\Lambda_d$ acting trivially on the discriminant $A_{\Gamma_d}\cong A_{\Lambda_d}\cong\ZZ/d\ZZ$.
\end{proof}

\begin{remark}
As any isometric embedding $U\,\hookrightarrow L_d$ splits, see \cite[Ex.\ 14.0.3]{HuyK3}, one concludes that for $d$ satisfying $(\ast\ast)_0$
and $(\ast\ast)_2$, respectively, that
\begin{eqnarray*}
(\ast\ast)_0\colon&A_2\oplus \ZZ\,v_d\cong  L_d\cong U\oplus \ZZ(d)\text{ and } (v_d)^2=-(1/3)\, d\\
(\ast\ast)_2\colon&ÊA_2\oplus \ZZ\,v_d\,\hookrightarrow  L_d\cong U\oplus \ZZ(d)\text{ index three and } (v_d)^2=-3\,d.
\end{eqnarray*}

\end{remark}

\begin{remark}
For a numerical description of these conditions one needs the following classical facts determining  which numbers are represented by  $A_2$, see \cite{Cox,Kneser}.
(i) For a given even, positive integer $d$ there exists a vector $w\in A_2$ with $(w)^2=d$ if and only if the prime factorization of $d/2$ satisfies
\begin{equation}\label{eqn:A2nonprim}
\xymatrix{\frac{d}{2}=\prod p^{n_p} Ê\text{ with }n_p\equiv0\,(2)\text{ for all } p\equiv2\,(3).}
\end{equation}
(ii) For a given even, positive integer $d$ there exists a primitive vector $w\in A_2$ with $(w)^2=d$ if and only if 
\begin{equation}\label{eqn:A2prim}
\xymatrix{\frac{d}{2}=\prod p^{n_p} Ê\text{ with }n_p=0\text{ for all } p\equiv2\,(3)\text{ and }n_3\leq1.}
\end{equation}
\end{remark}

\begin{prop} Numerically, $(\ast)$, $(\ast\ast')$, $(\ast\ast)$, and  $(\ast\!\ast\!\ast)$  are described by:
%
$$\begin{array}{rccl}
  {\rm (i)}& (\ast) & \Leftrightarrow &d\equiv0,2\, (6).\\
{\rm (ii)}& (\ast\ast') &  \Leftrightarrow &\exists\ w\in A_2\colon (w)^2=d ~~ \Leftrightarrow~~(\ref{eqn:A2nonprim}).\\
{\rm (iii)}&(\ast\ast)& \Leftrightarrow&\exists\ w\in A_2 ~{\rm primitive}\colon (w)^2=d~~ \Leftrightarrow ~(\ref{eqn:A2prim}) ~~\Leftrightarrow~~\exists\, a,n\in\ZZ\colon
d=\frac{2n^2+2n+2}{a}.\\
{\rm (iv)}&(\ast\!\ast\!\ast)&\Leftrightarrow&\exists\, a,n\in\ZZ\colon d=\frac{2n^2+2n+2}{a^2}.
\end{array}$$

%
\end{prop}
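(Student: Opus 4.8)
The plan is to prove the four equivalences one at a time, exploiting the explicit models of $L_d$ recorded in Remark~\ref{rem:epxlitchoices}, the splitting principle for hyperbolic planes, and the classical number-theoretic facts (\ref{eqn:A2nonprim}) and (\ref{eqn:A2prim}) recalled just above.

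For (i), recall from Lemma~\ref{lem:HassettKv} that an $L_d$ exists (equivalently a Noether--Lefschetz vector $v\in\Gamma$ with the prescribed discriminant exists) exactly when $d\equiv0,2\,(6)$, and one checks both residue classes are actually realized using the explicit vectors $v_d$ of Remark~\ref{rem:epxlitchoices}; this is essentially a restatement. For (ii), I would argue that $U(n)\hookrightarrow L_d$ for some $n\neq0$ forces an isotropic-free splitting: writing $L_d\cong A_2\oplus\ZZ\,v_d$ in case $d\equiv0\,(6)$ (where $(v_d)^2=-d/3$) or as the index-three overlattice in case $d\equiv2\,(6)$, one computes that a hyperbolic summand $U(n)$ sits inside $L_d$ iff $A_2$ represents $d$, i.e.\ iff (\ref{eqn:A2nonprim}) holds. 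The cleanest route is: an embedding $U(n)\hookrightarrow L_d$ produces, after intersecting with the $A_2^\perp$-direction and using that $L_d$ has signature $(2,1)$, a vector in (a finite-index rescaling of) $A_2$ of square $d$; conversely a vector $w\in A_2$ with $(w)^2=d$ together with $v_d$ spans a rank-two sublattice isometric to $U(n)$ for a suitable $n$.

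For (iii), the key is that a \emph{primitive} hyperbolic plane $U\hookrightarrow L_d$ always splits off as a direct summand (\cite[Ex.\ 14.0.3]{HuyK3}), so $(\ast\ast)$ holds iff $L_d\cong U\oplus\ZZ(d)$. Matching this against the models of $L_d$: in the $d\equiv0\,(6)$ case $L_d\cong U\oplus\ZZ(d)$ holds iff $A_2$ represents $d/3\cdot 3 = d$... more precisely iff $A_2\oplus\ZZ(-d/3)\cong U\oplus\ZZ(d)$, which by discriminant-form and Eichler-type reasoning is equivalent to $A_2$ primitively representing $d$; in the $d\equiv2\,(6)$ case one uses the index-three description and the matrix $-A$ of Remark~\ref{rem:epxlitchoices} to see the same conclusion. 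Then (\ref{eqn:A2prim}) is exactly the numerical translation, and the final parametric form $d=(2n^2+2n+2)/a$ comes from writing a primitive $w=x\lambda_1+y\lambda_2\in A_2$, so $(w)^2=2x^2-2xy+2y^2 = 2((x-y)^2+(x-y)y+y^2)$ which after completing the square is of the shape $2n^2+2n+2$ up to the $\gcd$-factor $a$ (the $a$ accounting for non-primitive $w$ of the same square, or equivalently $d$ dividing such a value); the identity $2m^2+2m+2 = (2m+1)^2 + 1 + \dots$ and standard manipulations make this precise.

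For (iv), condition $(\ast\!\ast\!\ast)$ additionally demands $\lambda_1\in U$ for the primitive hyperbolic plane $U\subset L_d$. Since $\lambda_1\in A_2\subset L_d$ is a fixed primitive vector with $(\lambda_1)^2=2$, asking for a primitive $U\subset L_d$ containing $\lambda_1$ is asking for an isotropic $e\in L_d$ with $(e.\lambda_1)=1$ and $e$ primitive; the span $\langle\lambda_1,e\rangle$ is then automatically a primitive $U$. Translating via the models of $L_d$, such an $e$ exists iff a certain quadratic form represents the relevant value with the extra divisibility by $a^2$ rather than $a$ — concretely, iff $d=(2n^2+2n+2)/a^2$; the squaring of $a$ reflects that $\lambda_1$ being \emph{inside} $U$ (rather than merely $U$ existing in $L_d$) pins down the sublattice spanned by $\lambda_1$ and the $v_d$-direction more rigidly. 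I expect the main obstacle to be precisely this last step: keeping careful track, in the $d\equiv2\,(6)$ index-three case, of which overlattice vectors are integral and of the exact divisibility condition that the constraint $\lambda_1\in U$ imposes, so as to get $a^2$ and not $a$. The rest is bookkeeping with the explicit Gram matrices of Remark~\ref{rem:epxlitchoices} and the represented-numbers formulas (\ref{eqn:A2nonprim})--(\ref{eqn:A2prim}).
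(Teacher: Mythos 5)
Your overall strategy for (i)--(iii) is the paper's: reduce each condition to (primitive) representation of $d$ by $A_2$ via isotropic vectors in $L_d$. But two steps as written would fail.

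First, in (ii) the converse is wrong as stated: a vector $w\in A_2$ with $(w)^2=d$ together with $v_d$ spans the lattice with Gram matrix ${\rm diag}(d,-d/3)$ (resp.\ ${\rm diag}(d,-3d)$ for $d\equiv2\,(6)$), whose discriminant is $-d^2/3$ (resp.\ $-3d^2$) and hence never $-n^2$; indeed $a^2d-b^2d/3=0$ forces $a=b=0$, so this sublattice contains no isotropic vectors at all and is not isometric to any $U(n)$. The repair, which is what the paper does, is to use that condition (\ref{eqn:A2nonprim}) is insensitive to square factors and to powers of $3$, so one may instead choose $w$ with $(w)^2=d/3$ (resp.\ $3d$), making $e\coloneqq w+v_d$ isotropic, and then manufacture the second isotropic vector as $f\coloneqq m\,w'-((w')^2/2)\,e$ for a suitable $w'\in A_2$ with $m=(w.w')\neq0$. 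The forward direction needs the same invariance spelled out: from $e=w_0+a\,v_d$ one only gets $(w_0)^2=a^2d/3$, not $d$.

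Second, in (iii) the central equivalence --- $L_d\cong U\oplus\ZZ(d)$ if and only if $A_2$ primitively represents $d$ --- is precisely the content to be proven, and ``discriminant-form and Eichler-type reasoning'' does not supply it: Eichler's criterion in the form used elsewhere in the paper requires two hyperbolic summands, which the rank-three lattice $L_d$ cannot contain. The paper argues directly: writing the isotropic generator as $e=w_0+a\,v_d$, one shows $w_0$ is primitive in $A_2$ (if $w_0=p\,w_1$, pairing with the second isotropic generator $f=w_0'+a'\,v_d$ gives $1=(e.f)\equiv0\,(p)$), and in the converse one must treat the case $(w.A_2)=3\,\ZZ$ separately by passing to $e=3\,e'$. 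Finally, note that the paper does not prove the parametric descriptions $d=(2n^2+2n+2)/a$ and $(2n^2+2n+2)/a^2$, nor (iv), referring instead to Hassett and Addington; your reduction of $\lambda_1\in U$ to the existence of an isotropic $e$ with $(e.\lambda_1)=1$ (so that $f\coloneqq\lambda_1-e$ completes the hyperbolic plane) is a correct first step, but, as you concede, the divisibility-by-$a^2$ analysis is left open, so this part remains a sketch rather than a proof.
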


\begin{proof}
The first assertion follows from Lemma \ref{lem:HassettKv}. 
\smallskip

To prove (ii), one has to distinguish between the two cases $d\equiv 0\,(6)$ and $d\equiv 2\,(6)$.
Assume first that $(\ast\ast')_0$ holds. Then $L_d=A_2\oplus\ZZ\,v_d$, which contains the isotropic vector $e\in U(n)\subset  L_d$. Writing  $e=w_0+a\, v_d$ for some $w_0\in A_2$ and $a\in \ZZ$, one has $(w_0)^2=a^2d/3$. Hence, $a^2d/6$ satisfies (\ref{eqn:A2nonprim}) and, therefore, $d/2$ does. The latter then yields the existence
of some $w\in A_2$ with $(w)^2=d$. Assume now we are in case $(\ast\ast')_2$, then the standard basis vector $e\in U\subset L_d$ itself might not be contained in $A_2\oplus\ZZ\, v_d$, but $3\,e$ is and replacing
$e$ by $3\,e$ and $(1/3)$ by $3$, one can argue as before.

Conversely, if $d/2$ satisfies (\ref{eqn:A2nonprim}), then we can pick $w\in A_2$ with
$(w)^2=d/3$  for $d\equiv 0\,(6)$ and with $(w)^2=3\,d$ for $d\equiv 2\,(6)$. Then $e\coloneqq w+v_d$ is isotropic. 
Furthermore,  there exists $w'\in A_2$ with $m\coloneqq(e.w')=(w.w')\ne0$. Then $f\coloneqq m \, w'-((w')^2/2)\,e$ satisfies $(f)^2=0$ and $(e.f)=(e.w')^2\eqqcolon n$, which yields an embedding $U(n)\,\hookrightarrow A_2\oplus\ZZ\, v_d\subset L_d$ proving $(\ast\ast')$.

\smallskip

Turning to (iii) and using the notation in (ii), observe that in case $(\ast\ast)_0$, which implies $(\ast\ast')_0$, the class $w_0$ has to be primitive. Indeed, if $w_0=p\,w_1$
for some prime $p$, then $p\mid a$ or $p\mid d/3$. On the other hand, writing $f=w_0'+a'\, v_d$ yields
the contradiction $1=(e.f)=p\,(w_1.w_0')+aa'd/3\equiv0\,(p)$. Hence, $a^2d/6$ satisfies (\ref{eqn:A2prim}) and, therefore, $d/2$ does, i.e.\ there exists
a primitive $w\in A_2$ with $(w)^2=d/2$. The argument for $(\ast\ast)_2$ is similar: If $e=w_0+a\, v_d$,  one argues as before. If  not,
then $3\, e=w_0+a\, v_d$ and if $w_0=p\, w_1$, then $p\ne 3$. All other primes are excluded as before.

For the converse in this situation, we use the arguments above and pick a primitive $w\in A_2$ with $(w)^2=d/3$ or $=3d$, respectively. 
As $A_{A_2}\cong \ZZ/3\ZZ$, either $(w.A_2)=\ZZ$ or $=3\,\ZZ$. If $(w)^2=d/3$, then the former holds (because $3^2\nmid d$) and, therefore,
$w'$ above can be chosen such that $m=1$. Hence, there exists  $U\,\hookrightarrow L_d$. If $(w.A_2)=3\,\ZZ$, so in particular $(w)^2=3\, d$
and  $d\equiv2\,(6)$, then the class  $e\coloneqq w\pm v_d$ is of the form
$e=3\, e'$ with $e'\in L_d$. Therefore, the two classes $e'$ and $f'\coloneqq w'-((w')^2/2)\,e'$, where  $w'\in A_2$ is chosen such that
$(w.w')=3$, define an embedding $U\,\hookrightarrow L_d$.

\smallskip As we will not use the presentation of $d$ as $(2n^2+2n+2)/a$ and $(2n^2+2n+2)/a^2$, respectively, we leave the proof 
of the other equivalences to the reader,
see \cite[Prop.\ 6.1.3]{HassComp} and \cite[Sec.\ 3]{AddCub}.
\end{proof}
The following table lists the first special discriminants, highlighting the difference between the four conditions.
\begin{center}
 \begin{tabular}[t]{|c|c|c|c|c|c|c|c|c|c|c|c|c|c|} \hline
 \raisebox{-0.03cm}{$(\ast\!\ast\!\ast)$}&\raisebox{-0.03cm}{}&\raisebox{-0.03cm}{}&\raisebox{-0.03cm}{14}&\raisebox{-0.03cm}{}&\raisebox{-0.03cm}{} &\raisebox{-0.03cm}{}&\raisebox{-0.03cm}{26}& \raisebox{-0.03cm}{}&\raisebox{-0.03cm}{}&\raisebox{-0.03cm}{}&\raisebox{-0.03cm}{38}&\raisebox{-0.03cm}{42} 
 \\
  [1pt]\hline \raisebox{-0.03cm}{$(\ast\ast)$} &\raisebox{-0.03cm}{}&\raisebox{-0.03cm}{}&\raisebox{-0.03cm}{14}&\raisebox{-0.03cm}{}&\raisebox{-0.03cm}{}&\raisebox{-0.03cm}{}&\raisebox{-0.03cm}{26}&\raisebox{-0.03cm}{}&\raisebox{-0.03cm}{}&\raisebox{-0.03cm}{}&\raisebox{-0.03cm}{38}&\raisebox{-0.03cm}{42} 
  \\
  [1pt]\hline
  \raisebox{-0.03cm}{$(\ast\ast')$}&\raisebox{-0.03cm}{\!8}&\raisebox{-0.03cm}{}&\raisebox{-0.03cm}{14}&\raisebox{-0.03cm}{18}&\raisebox{-0.03cm}{} &\raisebox{-0.03cm}{24}&\raisebox{-0.03cm}{26}& \raisebox{-0.03cm}{}&\raisebox{-0.03cm}{32}&\raisebox{-0.03cm}{}&\raisebox{-0.03cm}{38}&\raisebox{-0.03cm}{42} 
  \\
    [1pt]\hline
 \raisebox{-0.03cm}{$(\ast)$}&\raisebox{-0.03cm}{ \!8 }&\raisebox{-0.03cm}{12}&\raisebox{-0.03cm}{14}&\raisebox{-0.03cm}{18}&\raisebox{-0.03cm}{20} &\raisebox{-0.03cm}{24}&\raisebox{-0.03cm}{26}& \raisebox{-0.03cm}{30}&\raisebox{-0.03cm}{32}&\raisebox{-0.03cm}{36}&\raisebox{-0.03cm}{38}&\raisebox{-0.03cm}{42} 
 \\
[1pt]\hline
 \end{tabular}
\end{center}

\begin{center}
 \begin{tabular}[t]{|c|c|c|c|c|c|c|c|c|c|c|c|c|c|} \hline
 \raisebox{-0.03cm}{$(\ast\!\ast\!\ast)$}&\raisebox{-0.03cm}{}&\raisebox{-0.03cm}{}&\raisebox{-0.03cm}{}&\raisebox{-0.03cm}{}&\raisebox{-0.03cm}{} &\raisebox{-0.03cm}{}&\raisebox{-0.03cm}{62}& \raisebox{-0.03cm}{}&\raisebox{-0.03cm}{}&\raisebox{-0.03cm}{}&\raisebox{-0.03cm}{}&\raisebox{-0.03cm}{}
 \\
  [1pt]\hline \raisebox{-0.03cm}{$(\ast\ast)$} &\raisebox{-0.03cm}{}&\raisebox{-0.03cm}{}&\raisebox{-0.03cm}{}&\raisebox{-0.03cm}{}&\raisebox{-0.03cm}{}&\raisebox{-0.03cm}{}&\raisebox{-0.03cm}{62}&\raisebox{-0.03cm}{}&\raisebox{-0.03cm}{}&\raisebox{-0.03cm}{}&\raisebox{-0.03cm}{74}&\raisebox{-0.03cm}{78} 
  \\
  [1pt]\hline
  \raisebox{-0.03cm}{$(\ast\ast')$}&\raisebox{-0.03cm}{}&\raisebox{-0.03cm}{}&\raisebox{-0.03cm}{50}&\raisebox{-0.03cm}{}&\raisebox{-0.03cm}{} &\raisebox{-0.03cm}{}&\raisebox{-0.03cm}{62}& \raisebox{-0.03cm}{}&\raisebox{-0.03cm}{68}&\raisebox{-0.03cm}{}&\raisebox{-0.03cm}{74}&\raisebox{-0.03cm}{78} 
  \\
    [1pt]\hline
 \raisebox{-0.03cm}{($\ast)$}&\raisebox{-0.03cm}{44}
 &\raisebox{-0.03cm}{48}&\raisebox{-0.03cm}{50}&\raisebox{-0.03cm}{54}&\raisebox{-0.03cm}{56} &\raisebox{-0.03cm}{60}&\raisebox{-0.03cm}{62}& \raisebox{-0.03cm}{66}&\raisebox{-0.03cm}{68}&\raisebox{-0.03cm}{72}&\raisebox{-0.03cm}{74}&\raisebox{-0.03cm}{78}
 \\
[1pt]\hline
 \end{tabular}
\end{center}

\subsection{} 
In the theory of K3 surfaces, there are good reasons to pass from the K3 lattice $\Lambda\cong H^2(S,\ZZ)$
to the Mukai lattice $\widetilde\Lambda \cong \widetilde H(S,\ZZ)\cong H^2(S,\ZZ)\oplus U_4$,
see \cite[Ch.\ 16]{HuyK3} for a survey and references. A similar extension
of lattices, though slightly more technical due to the non-triviality of the canonical bundle,
turns out to be useful for cubics and their comparison with K3 surfaces.

We have already constructed and fixed an isomorphism $\Gamma\cong E\oplus U_1\oplus U_2\oplus A_2(-1)\cong A_2^\perp$,
where  $A_2(-1)\oplus A_2\,\hookrightarrow U_3\oplus U_4$. On the cubic side, one also finds a natural sublattice
isomorphic to $U_3\oplus U_4$, namely $H^{\ast\ne4}(X,\ZZ)$. However, the distinguished $A_2(-1)\subset\Gamma$ sits in $H^4(X,\ZZ)$,
so this has to be modified. Moreover, we will embed $A_2$ into rational cohomology $H^*(X,\QQ)$
and the intersection product on $H^*(X,\QQ)$ is modified by more than a mere sign.

\begin{definition}
The \emph{Mukai pairing} on $H^*(X,\QQ)$ is defined as
\begin{equation}\label{eqn:Mukaipcu}
(\alpha.\alpha')\coloneqq - \int e^{\frac{{\rm c}_1(X)}{2}}\cdot \alpha^*\cdot \alpha'.
\end{equation}
\end{definition}
Here,
$(\alpha_0+\alpha_2+\alpha_4+\alpha_6+\alpha_8)^*\coloneqq \alpha_0-\alpha_2+\alpha_4-\alpha_6+\alpha_8$ and
$$e^{\frac{{\rm c}_1(X)}{2}}=e^{\frac{3h}{2}}=1+\frac{3}{2}h+\frac{9}{8}h^2+\frac{27}{48}h^3+\frac{81}{384}h^4.$$
\emph{Warning:} Unlike the Mukai pairing for K3 surfaces, the pairing (\ref{eqn:Mukaipcu}) is not symmetric.
\begin{definition}
The \emph{Mukai vector} of a coherent sheaf $E\in{\rm Coh}(X)$, or a complex $E\in\Db(X)$,  or simply a class $E\in K_{\rm top}(X)$
is defined as $$v(E)\coloneqq{\rm ch}(E)\cdot\sqrt{{\rm td}(X)}.$$
\end{definition}
One easily computes  $$\sqrt{{\rm td}(X)}=1+\frac{3}{4}h+\frac{11}{32}h^2+\frac{15}{128} h^3+\frac{121}{6144}h^4.$$
 Using the general fact $\sqrt{{\rm td}}^*=e^{-\frac{{\rm c}_1(X)}{2}}\cdot\sqrt{{\rm td}}$ and the Grothendieck--Riemann--Roch formula, one
 expresses the Euler--Poincar\'e pairing of two coherent sheaves as
 \begin{equation}\label{eqn:chiPoincub}
 \chi(E,E')=-(v(E).v(E')).
 \end{equation}
 Note that the left hand side is not symmetric, as $\omega_X$ is not trivial. This confirms the observation that (\ref{eqn:Mukaipcu}) is not symmetric.
 
 \begin{ex} For our purposes the following classes are of importance:
 $$w_0\coloneqq v(\ko_X)=\sqrt{{\rm td}(X)},~~Êw_1\coloneqq v(\ko_X(1))=e^h\cdot \sqrt{{\rm td}(X)},$$
 $$\text{ and }w_2\coloneqq v(\ko_X(2))=e^{2h}\cdot \sqrt{{\rm td}(X)}.$$
 In a sense to be made more precise, these classes are responsible for $(~.~)$ not being symmetric. Explicitly, they are
$$w_0=1+\frac{3}{4}h+\frac{11}{32}h^2+\frac{15}{128} h^3+\frac{121}{6144}h^4,~~w_1=1+\frac{7}{4}h+\frac{51}{32}h^2+\frac{385}{384} h^3+\frac{2921}{6144}h^4,$$
$$\text{ and } ~~w_2=1+\frac{11}{4}h+\frac{132}{32}h^2+\frac{1397}{384} h^3+\frac{16025}{6144}h^4.$$
 \end{ex}

In addition to the classes $w_0,w_1,w_2$, one also needs  the following ones
$$v(\lambda_1)\coloneqq 3+\frac{5}{4}h-\frac{7}{32} h^2-\frac{77}{384} h^3+\frac{41}{2048}h^4.$$
 $$v(\lambda_2)\coloneqq-3-\frac{1}{4}h+\frac{15}{32} h^2+\frac{1}{384} h^3-\frac{153}{2048}h^4.$$

\begin{remark}
The notation suggests that the $v(\lambda_i)$, $i=1,2$, are Mukai vectors of some natural (complexes of) sheaves. This is almost true, as we explain next.
Consider an arbitrary line $L\subset X$ and the two natural sheaves $\ko_L(i)$, $i=1,2$, on $X$. Their Mukai vectors
are $$u_i\coloneqq v(\ko_L(i))=
\begin{cases}\frac{1}{3}h^3+\frac{5}{12}h^4&\text{ if }i=1\\
\frac{1}{3}h^3+\frac{9}{12}h^4&\text{ if }i=2.
\end{cases}$$
Under the right orthogonal projection $H^*(X,\QQ)\to \{w_0,w_1,w_2\}^\perp$ they are mapped to $\lambda_i$.
Explicitly, 
\begin{equation}\label{eqn:imlambda}
v(\lambda_1)=u_1-w_1+4w_0\text{ and }v(\lambda_2)=u_2-w_2+4w_1-6w_0.
\end{equation}
Here, one uses $(u_i.u_j)=0$ for all $i,j$ and $$\begin{array}{rclll}
(w_i.w_j)&=&\chi(\ko_X(i),\ko_X(j))&=&\chi(X,\ko_X(j-i)),\\
(w_i.u_j)&=&\chi(\ko_X(i),\ko_L(j))&=&\chi(\PP^1,\ko_{\PP^1}(j-i)),\\
(u_i,w_j)&=&\chi(\ko_L(i),\ko_X(j))&=&\chi(\PP^1,\ko_{\PP^1}(i-j-3)),\\
\end{array}$$
\end{remark}

\begin{lem}\label{lem:symoncompl}
If $H^*(X,\QQ)$ is considered with the negative Mukai pairing, then 
 $$A_2\,\hookrightarrow H^*(X,\QQ),~\lambda_i\mapsto v(\lambda_i)$$ defines an isometric embedding. Furthermore,
\begin{enumerate}
\item[{\rm (i)}] $v(\lambda_1),v(\lambda_2)\in\{w_0,w_1,w_2\}^\perp$.
\item[{\rm (ii)}] $w_0,w_1,w_2,v(\lambda_1),v(\lambda_2)\in\QQ[h]$ are linearly independent.
\item[{\rm (iii)}] $\{w_0,w_1,w_2,v(\lambda_1),v(\lambda_2)\}^\perp=H^4(X,\QQ)_{\rm pr}={}^\perp\{w_0,w_1,w_2,v(\lambda_1),v(\lambda_2)\}$, on which the
Mukai pairing coincides with the intersection product (up to sign).
\item[{\rm (iv)}] The Mukai pairing $(~.~)$ is symmetric on the right orthogonal complement $$\{w_0,w_1,w_2\}^\perp\subset H^*(X,\QQ).$$
\end{enumerate}
\end{lem}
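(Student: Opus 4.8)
The plan is to verify the four assertions essentially by explicit computation in $\QQ[h]$, reducing everything to the intersection numbers of the five explicit polynomials $w_0,w_1,w_2,v(\lambda_1),v(\lambda_2)$ recorded above. First I would compute the Gram matrix of $\{w_0,w_1,w_2\}$ with respect to the Mukai pairing via $(w_i.w_j)=\chi(X,\ko_X(j-i))$, and likewise pair these with $v(\lambda_1),v(\lambda_2)$ using the relations (\ref{eqn:imlambda}) together with $(u_i.u_j)=0$ and the Euler-characteristic formulas for $(w_i.u_j)$, $(u_i.w_j)$ listed in the preceding remark; this computation simultaneously establishes (i), since it shows $(w_k.v(\lambda_i))=0$ for $k=0,1,2$ and $i=1,2$. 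For (ii), linear independence in $\QQ[h]$ is immediate from the triangular shape of the five polynomials once one inspects, say, the coefficients of $1,h,h^2,h^3,h^4$ (the $5\times 5$ coefficient matrix has nonzero determinant), so this step is purely mechanical.

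Next I would treat the isometry claim: the pairing restricted to $\ZZ v(\lambda_1)\oplus\ZZ v(\lambda_2)$ must be computed and shown to equal $-\begin{pmatrix}2&-1\\-1&2\end{pmatrix}$, i.e.\ $\begin{pmatrix}2&-1\\-1&2\end{pmatrix}$ after the sign change to the negative Mukai pairing. Using (\ref{eqn:imlambda}) and the fact that $v(\lambda_i)$ is the right-orthogonal projection of $u_i$ onto $\{w_0,w_1,w_2\}^\perp$, one gets $(v(\lambda_i).v(\lambda_j))=(u_i.v(\lambda_j))$ because the correction terms lie in the span of the $w_k$'s which are right-orthogonal to $v(\lambda_j)$; then $(u_i.v(\lambda_j))$ expands via (\ref{eqn:imlambda}) into a combination of $(u_i.u_j)=0$ and $(u_i.w_k)=\chi(\PP^1,\ko_{\PP^1}(i-k-3))$, all computable by Riemann--Roch on $\PP^1$. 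One should double-check symmetry here: although $(~.~)$ is not symmetric globally, on the sublattice spanned by $v(\lambda_1),v(\lambda_2)$ it must come out symmetric, and (iv) will explain why.

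For (iv), the conceptual point is that the Mukai pairing fails to be symmetric only because of the twist $e^{{\rm c}_1(X)/2}$, and the asymmetry is controlled by the classes $w_0,w_1,w_2$: concretely, for $\alpha,\beta\in H^*(X,\QQ)$ one has $(\alpha.\beta)-(\beta.\alpha)=-\int(e^{3h/2}-e^{-3h/2})\cdot(\text{product of components})$, an expression that only involves the odd-degree part of $e^{3h/2}$, i.e.\ multiplication by $\tfrac{3}{2}h+\tfrac{27}{48}h^3$; one checks that the resulting bilinear form on $H^*(X,\QQ)$ has radical (on either side) containing, and in fact equal to the orthogonal complement of, $\langle w_0,w_1,w_2\rangle$ — equivalently, that $\chi(E,E')=\chi(E',E)$ whenever $E$ or $E'$ is right-orthogonal to all of $\ko_X,\ko_X(1),\ko_X(2)$. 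Since $\sqrt{\td(X)}\in\QQ[h]$ is invertible in the ring $\QQ[h]/(h^5)$ and $1,e^h,e^{2h}$ together with the asymmetry correction span the relevant low-degree piece, one deduces symmetry on $\{w_0,w_1,w_2\}^\perp$. Granting (iv), assertion (iii) follows: $H^4(X,\QQ)_{\rm pr}$ is by definition $h^2$-primitive middle cohomology, it is clearly right-orthogonal (hence by (iv) also left-orthogonal) to $w_0,w_1,w_2$ since those lie in the subring $\QQ[h]$, and it is orthogonal to $v(\lambda_1),v(\lambda_2)$ because the degree-$4$ parts of the latter are multiples of $h^2$; a dimension count ($\dim H^*(X,\QQ)=1+0+23+0+1=25$, minus $5$ for the span of the $w$'s and $v(\lambda_i)$'s, leaves $20=\dim H^4(X,\QQ)_{\rm pr}$... careful: $\dim H^4=23$, $\dim H^4_{\rm pr}=22$, and one must check the five classes span a complement to $H^4_{\rm pr}$ inside $H^*$, i.e.\ their projections to $\QQ h\oplus\QQ h^3\oplus\QQ h^4\oplus H^0\oplus H^8$ are a basis, which again is the triangularity from (ii)) finishes the identification, and the last clause — that the Mukai pairing there equals $\pm$ the intersection form — is immediate since on $H^4(X,\QQ)$ the twist $e^{3h/2}$ contributes only its degree-zero term $1$ and $\alpha^*=\alpha$, so $(\alpha.\beta)=-\int\alpha\cdot\beta$.

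The main obstacle I anticipate is purely bookkeeping: getting all the Euler characteristics $\chi(X,\ko_X(m))$ for $m\in\{-2,-1,0,1,2\}$ and $\chi(\PP^1,\ko_{\PP^1}(m))$ right (including the cases where higher cohomology contributes, e.g.\ $\chi(X,\ko_X(-1))$, $\chi(X,\ko_X(-2))$ via Serre duality on the cubic fourfold with $\omega_X=\ko_X(-3)$), and then confirming that these numerical inputs make the $v(\lambda_i)$ pair up to the $A_2$ matrix exactly — any sign slip in the $e^{3h/2}$ twist or in $\alpha^*$ propagates everywhere. The genuinely non-formal step is (iv): one must argue cleanly that right-orthogonality to the three explicit classes $w_0,w_1,w_2$ is exactly the condition killing the antisymmetric part of $(~.~)$, rather than merely a sufficient condition; this is where I would be most careful, phrasing it as: the antisymmetrization of $(~.~)$ is a pairing whose associated map $H^*(X,\QQ)\to H^*(X,\QQ)^\vee$ has rank $\le 3$ with image spanned by the functionals $\alpha\mapsto(w_i.\alpha)$, so its left kernel is precisely $\{w_0,w_1,w_2\}^\perp$.
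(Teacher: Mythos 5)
Your strategy for the main isometry claim and for (i), (ii) is the paper's own (explicit computation together with the observation that $v(\lambda_i)$ is the right orthogonal projection of $u_i$), and those parts go through; note only that $\dim H^{2*}(X,\QQ)=1+1+23+1+1=27$, not $25$ (you set $b_2=b_6=0$), though your parenthetical correction recovers the right count because the five classes span $\QQ[h]$, which is a complement of $H^4(X,\QQ)_{\rm pr}$. Also, in the reduction $(v(\lambda_i).v(\lambda_j))=(u_i.v(\lambda_j))$ the correction terms sit in the \emph{first} slot, so what you actually need is $(w_k.v(\lambda_j))=0$, which is precisely (i); the phrase ``the $w_k$'s are right-orthogonal to $v(\lambda_j)$'' would mean $(v(\lambda_j).w_k)=0$, and that is false.

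The genuine gap is in your argument for (iv). The antisymmetrization $a(\alpha,\beta)=(\alpha.\beta)-(\beta.\alpha)=-\int(e^{3h/2}-e^{-3h/2})\,\alpha^*\beta$ has kernel $\{\alpha\mid (3h+\tfrac{9}{8}h^3)\cdot\alpha=0\}=H^4(X,\QQ)_{\rm pr}\oplus H^8(X,\QQ)$, of dimension $23$; hence $a$ has rank $4$, not $\le 3$, and its kernel is \emph{not} $\{w_0,w_1,w_2\}^\perp$ (which has dimension $24$, contains $v(\lambda_1)\notin\ker a$, and does not contain the point class $\tfrac13 h^4\in\ker a$). Your ``equivalent'' formulation --- symmetry as soon as \emph{one} of the two arguments is right-orthogonal to the $\ko_X(i)$ --- is also false: by (i) one has $(w_0.v(\lambda_1))=0$, yet $(v(\lambda_1).w_0)=(u_1.w_0)-(w_1.w_0)+4\,(w_0.w_0)=\chi(\PP^1,\ko_{\PP^1}(-2))-\chi(X,\ko_X(-1))+4\,\chi(X,\ko_X)=-1-0+4=3\ne0$. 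What is true, and what the paper uses, is only that $\{w_0,w_1,w_2\}^\perp$ is an \emph{isotropic subspace} for $a$, and this is deduced from (iii), not the other way around: prove (iii) directly (both the left and the right orthogonal complement of the five classes equal $H^4(X,\QQ)_{\rm pr}$, since the five classes lie in $\QQ[h]$, pairing a primitive degree-four class against $\QQ[h]$ on either side reduces to $\int h^2\cdot\gamma=0$, and the Gram matrix of the five classes is nondegenerate); this yields $\{w_0,w_1,w_2\}^\perp=H^4(X,\QQ)_{\rm pr}\oplus\langle v(\lambda_1),v(\lambda_2)\rangle$, on which the pairing is symmetric because it is symmetric on each summand ($\pm$ the intersection form, resp.\ the $A_2$ Gram matrix) and the two summands are orthogonal to each other on both sides. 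So the logical order must be reversed: (iii) cannot be ``granted (iv)''; (iv) is the corollary of (iii).
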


\begin{proof}
The first assertion can be verified by a computation or using (\ref{eqn:imlambda}). Similarly, (i) follows from the observation that $v(\lambda_i)$ is the orthogonal projection of $u_i$ and (ii) is again proven by a computation. Finally, (ii) implies (iii) and (iv) can be deduced from (iii).
\end{proof}

\begin{cor}
The lattices $A_2^\perp\cong\Gamma\cong H^4(X,\ZZ)_{\rm pr}\subset H^*(X,\QQ)$ and $A_2\cong \ZZ\,v(\lambda_1)\oplus\ZZ\, v(\lambda_2)\subset H^*(X,\QQ)$
are orthogonal with respect to the Mukai pairing (\ref{eqn:Mukaipcu}). The induced embedding of their direct sum $A_2^\perp\oplus A_2$
extends to 
\begin{equation}\label{eqn:tildeLH}
A_2^\perp\oplus A_2\subset\widetilde\Lambda\,\hookrightarrow H^*(X,\QQ).
\end{equation}
\vskip-0.7cm\qed
\end{cor}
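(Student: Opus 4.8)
The plan is to deduce the corollary from Lemma~\ref{lem:symoncompl} together with the index-three inclusion $A_2\oplus A_2^\perp\subset\widetilde\Lambda$ recorded above; apart from the computation already hidden in Lemma~\ref{lem:symoncompl}, the only remaining task is to extend an isometric embedding across a finite-index overlattice, which is purely formal.

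First I would assemble the embedding $A_2^\perp\oplus A_2\hookrightarrow H^*(X,\QQ)$. For the $A_2^\perp$-factor, note that since $\beta^\ast=\beta$ for $\beta\in H^4(X,\QQ)$ and $e^{{\rm c}_1(X)/2}$ contributes only in degree $0$ to $\int e^{{\rm c}_1(X)/2}\,\beta^\ast\beta'$, the Mukai pairing restricts on $H^4(X,\QQ)$ to $(-1)$ times the intersection form; together with $\Gamma\cong H^4(X,\ZZ)_{\rm pr}(-1)$ and the already fixed isomorphism $\Gamma\cong A_2^\perp$, this makes $A_2^\perp\cong\Gamma\cong H^4(X,\ZZ)_{\rm pr}\subset H^*(X,\QQ)$ an isometric embedding for the Mukai pairing. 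For the $A_2$-factor I would simply quote Lemma~\ref{lem:symoncompl}: $\lambda_i\mapsto v(\lambda_i)$ is an isometric embedding with image inside $\{w_0,w_1,w_2\}^\perp$ (part~(i)), a subspace on which, by part~(iv), the Mukai pairing is symmetric. Orthogonality of the two sublattices is then immediate from part~(iii), according to which $H^4(X,\QQ)_{\rm pr}$ --- hence its sublattice $H^4(X,\ZZ)_{\rm pr}$ --- is simultaneously the left and the right orthogonal complement of $\{w_0,w_1,w_2,v(\lambda_1),v(\lambda_2)\}$, so every class of $H^4(X,\ZZ)_{\rm pr}$ pairs trivially with $v(\lambda_1)$ and $v(\lambda_2)$ on either side. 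Combining these, I obtain an isometric embedding $\iota\colon A_2^\perp\oplus A_2\hookrightarrow H^*(X,\QQ)$ of the orthogonal direct sum, whose image lies in $\{w_0,w_1,w_2\}^\perp$ and is therefore an even lattice.

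Next I would extend $\iota$ over $\widetilde\Lambda$. Recall that $A_2\oplus A_2^\perp\subset\widetilde\Lambda$ has index three, with $\widetilde\Lambda$ generated over $A_2\oplus A_2^\perp$ by $(1/3)(\mu_1-\mu_2-\lambda_1+\lambda_2)=e_3+f_4$. Since $\widetilde\Lambda\subset(A_2^\perp\oplus A_2)\otimes\QQ$, I would extend $\iota$ $\QQ$-linearly and restrict to $\widetilde\Lambda$; concretely, the extra generator is sent to the rational class $(1/3)\bigl(\iota(\mu_1)-\iota(\mu_2)-v(\lambda_1)+v(\lambda_2)\bigr)\in H^*(X,\QQ)$. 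This restriction is automatically injective and isometric --- both are tested on the rational span, where they already hold --- and its image is again an even lattice because $\widetilde\Lambda$ is even and the extension preserves the Mukai pairing. This produces the asserted chain $A_2^\perp\oplus A_2\subset\widetilde\Lambda\hookrightarrow H^*(X,\QQ)$.

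The one place demanding care --- more bookkeeping than obstacle --- is the sign tracking: one must check that the Mukai pairing~(\ref{eqn:Mukaipcu}), despite its leading minus and its asymmetry, restricts on the symmetric subspace $\{w_0,w_1,w_2\}^\perp$ to forms whose signatures force $\Gamma\oplus A_2\cong\widetilde\Lambda$, i.e.\ $(H^4(X,\ZZ)_{\rm pr},(~.~))\cong\Gamma$ and $(\langle v(\lambda_1),v(\lambda_2)\rangle,(~.~))\cong A_2$ rather than $A_2(-1)$. Both facts are already contained in Lemma~\ref{lem:symoncompl}, and can if desired be confirmed by the short computation $(v(\lambda_1))^2=(v(\lambda_2))^2=2$ and $(v(\lambda_1).v(\lambda_2))=-1$; granting them, the corollary follows.
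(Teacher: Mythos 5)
Your proof is correct and follows exactly the route the paper intends for this unproved corollary: orthogonality and the isometry types of the two factors are read off from Lemma~\ref{lem:symoncompl} (with the Mukai pairing reducing to minus the intersection form on $H^4$ for degree reasons), and the extension over $\widetilde\Lambda$ is the formal $\QQ$-linear extension across the index-three overlattice generated by $(1/3)(\mu_1-\mu_2-\lambda_1+\lambda_2)$. Your explicit sign check $(v(\lambda_1))^2=(v(\lambda_2))^2=2$, $(v(\lambda_1).v(\lambda_2))=-1$ is a worthwhile addition, since the "negative Mukai pairing" phrasing in Lemma~\ref{lem:symoncompl} is easy to misread against the corollary's claim that the span of the $v(\lambda_i)$ is $A_2$ rather than $A_2(-1)$.
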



A more conceptual understanding of these calculations is provided by the discussion in \cite{AT}.
In particular, cohomology with rational coefficients $H^*(X,\QQ)$ is replaced by integral
topological K-theory. Denote by $K_{\rm top}(X)$ the topological K-theory of all complex
vector bundles. Traditionally, the Chern character is used to identify $K_{\rm top}(X)\otimes\QQ$ with
$H^*(X,\QQ)=H^{2*}(X,\QQ)$. For our purposes the Mukai vector is better suited
$$v\colon K_{\rm top}(X)\,\hookrightarrow K_{\rm top}(X)\otimes\QQ\congpf H^*(X,\QQ).$$
Note that the torsion freeness of $K_{\rm top}(X)$ follows from the torsion freeness of $H^*(X,\ZZ)$ and  the Atiyah--Hirzebruch spectral sequence. Then $K_{\rm top}(X)$ is equipped with a non-degenerate but non-symmetric
linear form with values in $\QQ$. Due to (\ref{eqn:chiPoincub}), it takes values in $\ZZ$ on the image
of the highly non-injective map $K(X)\to K_{\rm top}(X)$. Clearly,
the classes $[\ko_X(i)]$, $i=0,1,2$, and $[\ko_L(i)]$, $i=1,2$, are all contained in the image.
We shall be interested in the right orthogonal complement of the former three classes and
introduce the notation:
$$K_{\rm top}'(X)\coloneqq \left\{\, [\ko_X], [\ko_X(1)],[\ko_X(2)]\,\right\}^\perp\subset K_{\rm top}(X).$$

\begin{prop}[Addington--Thomas]\label{prop:ATKtop}
The restriction of the Mukai pairing $(~.~)=-\chi(~,~)$ to $K_{\rm top}'(X)$ is symmetric and integral
Moreover, as abstract lattices 
$$\widetilde\Lambda\cong K_{\rm top}'(X).$$
\end{prop}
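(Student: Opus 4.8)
The plan is to prove the two assertions of Proposition~\ref{prop:ATKtop} separately: first the integrality and symmetry of the restricted pairing, then the abstract identification of the lattice with $\widetilde\Lambda$. For the integrality, recall that by (\ref{eqn:chiPoincub}) the Mukai pairing already takes integral values on the image of $K(X)\to K_{\rm top}(X)$, but $K_{\rm top}(X)$ itself is larger. The point is that $K_{\rm top}'(X)\otimes\QQ$ is, via $v$ and the Chern character, exactly the subspace $\{w_0,w_1,w_2\}^\perp\subset H^*(X,\QQ)$ studied in Lemma~\ref{lem:symoncompl}(iv), on which the Mukai pairing is symmetric; hence symmetry on $K_{\rm top}'(X)$ is immediate once we know $K_{\rm top}'(X)$ spans this subspace rationally. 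For integrality I would argue that $K_{\rm top}'(X)$, being the right orthogonal complement of three classes that pair integrally with everything, is a primitive (saturated) sublattice of the unimodular lattice $K_{\rm top}(X)$ — here one uses that $K_{\rm top}(X)$ carries a unimodular form, which follows from Poincar\'e duality and torsion-freeness via the Atiyah--Hirzebruch spectral sequence as already noted in the text. Then the induced form on $K_{\rm top}'(X)$ is again unimodular (orthogonal complement of a unimodular sublattice spanned by $[\ko_X(i)]$, or more carefully: the Gram matrix of the $w_i$ has determinant $\pm1$, so the complement is unimodular), and in particular integral.

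Next, to pin down the signature and rank: $K_{\rm top}(X)\cong H^*(X,\ZZ)$ as abelian groups has rank $\sum b_i(X)=1+0+1+0+23+0+1+0+1=27$, wait — the even Betti numbers of a smooth cubic fourfold are $1,1,23,1,1$ in degrees $0,2,4,6,8$, giving total rank $27$; removing the rank-$3$ span of $w_0,w_1,w_2$ leaves rank $24$, which matches $\rk\widetilde\Lambda=24$. For the signature, the intersection form on $H^4(X,\ZZ)$ has signature $(21,2)$, the pairing on $(H^0\oplus H^8)(X,\ZZ)$ together with $(H^2\oplus H^6)(X,\ZZ)$ contributes hyperbolic planes, and after the sign change built into the Mukai pairing one computes the signature of $K_{\rm top}'(X)$ to be $(4,20)$. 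I would verify this by decomposing $H^*(X,\QQ)$ orthogonally (with the Mukai form) as $\QQ\langle w_0,w_1,w_2\rangle \oplus \QQ\langle v(\lambda_1),v(\lambda_2)\rangle \oplus H^4(X,\QQ)_{\rm pr}$, using Lemma~\ref{lem:symoncompl}(iii): the middle summand is $A_2(-1)_\QQ$ with signature $(0,2)$ and $H^4(X,\QQ)_{\rm pr}$ (after sign change) has signature $(4,18)$, totalling $(4,20)$. An even, unimodular lattice of signature $(4,20)$ is unique by the classification of indefinite unimodular lattices, so it must be isomorphic to $\widetilde\Lambda\cong E\oplus U^{\oplus 4}$ — provided we also check \emph{evenness}.

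The evenness of $K_{\rm top}'(X)$ is the subtle point, since $K_{\rm top}(X)\cong H^*(X,\ZZ)$ contains $H^4(X,\ZZ)\cong {\rm I}_{21,2}$, which is \emph{odd}. The resolution is that the odd part is exactly carried by the sublattice $\ZZ h^2\subset H^4$ (where $(h^2)^2=3$), which after the sign change and inclusion of the other factors lives in the span of the $w_i$ and $v(\lambda_j)$; more precisely $H^4(X,\ZZ)_{\rm pr}\cong\Gamma$ with $\Gamma=E\oplus U^{\oplus2}\oplus A_2(-1)$ is even, and the classes $w_i$ absorbing the odd behaviour are orthogonal to $K_{\rm top}'(X)$. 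So I would show $K_{\rm top}'(X)$ sits inside the even lattice $\Gamma \oplus (\text{even complement})$; concretely, using the embedding (\ref{eqn:tildeLH}) and the identification of $K_{\rm top}'(X)\otimes\QQ$ with $\widetilde\Lambda\otimes\QQ$, one checks the integral structure agrees. The cleanest route: exhibit $\widetilde\Lambda = A_2^\perp\oplus A_2 + (\text{glue})$ as constructed earlier sits inside $K_{\rm top}'(X)$ as a finite-index sublattice, then compare discriminants — both are unimodular (the earlier computation that $A_2\oplus A_2^\perp\subset\widetilde\Lambda$ has index $3$, and $[\bar\Gamma:\Gamma\oplus\ZZ h^2]=3$, the two factors of $3$ cancelling) — forcing equality.

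\textbf{Main obstacle.} The hard part will be controlling the integral structure, i.e.\ proving evenness and unimodularity of $K_{\rm top}'(X)$ rather than merely of $K_{\rm top}'(X)\otimes\QQ$; the rational statements are all routine linear algebra given Lemma~\ref{lem:symoncompl}, but showing that the lattice $K_{\rm top}'(X)$ — defined as a right-orthogonal complement in an integral K-theory lattice — is exactly the even unimodular one, with no index subtlety, requires carefully tracking how the ``$3$'s'' in $[\bar\Gamma:\Gamma\oplus\ZZ h^2]$, in the index-three embedding $A_2\oplus A_2^\perp\subset\widetilde\Lambda$, and in the non-integral Mukai vectors $u_i=v(\ko_L(i))$ interact. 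I expect this is where one genuinely needs the Grothendieck--Riemann--Roch bookkeeping of \cite{AT} rather than a soft argument, so I would ultimately defer the detailed verification to that reference.
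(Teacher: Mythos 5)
Your overall strategy is genuinely different from the one sketched in the paper, and most of it is sound. The paper's sketch goes the other way around: it builds an isometric embedding of $\widetilde\Lambda$ into $K_{\rm top}'(X)$ of finite index --- $H^4(X,\ZZ)_{\rm pr}$ via lifts of vanishing spheres, $A_2$ via the projected classes $p[\ko_L(i)]$, and the index-three glue class $\frac{1}{3}(\mu_1-\mu_2-\lambda_1+\lambda_2)$ --- and then concludes from the unimodularity of $\widetilde\Lambda$ that the embedding is onto. You instead want to prove unimodularity of $K_{\rm top}'(X)$ directly, as the right orthogonal complement of the triple $w_0,w_1,w_2$ (whose Gram matrix is triangular with $\pm1$ on the diagonal, hence unimodular) inside the unimodular lattice $(K_{\rm top}(X),-\chi)$, and then invoke the classification of indefinite unimodular lattices in signature $(4,20)$. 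The unimodularity-of-the-complement argument is correct, provided you handle the asymmetry of the pairing with care, and it is a real gain: integrality and unimodularity come for free, exactly where the paper has to work.

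There are two issues. First, a computational one: in your orthogonal decomposition the summand spanned by $v(\lambda_1),v(\lambda_2)$ has signature $(2,0)$, not $(0,2)$ (it is a copy of $A_2$ in the convention of Lemma \ref{lem:symoncompl}), and $H^4(X,\QQ)_{\rm pr}$ after the sign change has signature $(2,20)$, not $(4,18)$; your total $(4,20)$ is nevertheless correct. Second, and more seriously, the classification argument stands or falls with evenness, and that is precisely where the content of the proposition sits: ${\rm I}_{4,20}$ is also unimodular of signature $(4,20)$, and the ambient $K_{\rm top}(X)$ is odd, so ``the odd part is carried by $\ZZ\, h^2$'' is a heuristic, not a proof. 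The way to close this gap without the full Riemann--Roch bookkeeping is to import the one genuinely geometric input of the paper's sketch, namely that the integral lattice $H^4(X,\ZZ)_{\rm pr}\cong\Gamma$ (not merely its rational span) lifts isometrically into $K_{\rm top}'(X)$ via vanishing spheres. Then $\Gamma\oplus A_2$, which is even of discriminant $9$, sits inside your unimodular $K_{\rm top}'(X)$ with index necessarily equal to $3$; since this index is odd and the pairing is integral, every $\kappa\in K_{\rm top}'(X)$ satisfies $9(\kappa)^2=(3\kappa)^2\in2\ZZ$, hence $(\kappa)^2\in2\ZZ$, and evenness follows. This would complete your route and, pleasantly, avoid having to exhibit the integral glue class at all. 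As written, however, the proposal supplies neither the integral lift of $H^4(X,\ZZ)_{\rm pr}$ nor any evenness argument, so the deferral to [AT] is covering the essential step rather than a routine verification.
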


\begin{proof} 
Note that $v\colon K_{\rm top}'(X)\otimes\QQ\congpf\{w_0,w_1,w_2\}^\perp$. Hence, Lemma \ref{lem:symoncompl} implies
the first assertion. The original proof \cite{AT}  of the second assertion uses derived categories.
Here is a sketch of a more direct, purely topological argument. Consider the right orthogonal projection $p\colon K_{\rm top}(X)\twoheadrightarrow
K_{\rm top}'(X)$. It really is defined
over $\ZZ$, as $(w_i)^2=1$.  Analogously to (\ref{eqn:imlambda}), one has $p[\ko_L(1)]=[\ko_L(1)]-[\ko_X(1)]+4[\ko_X]$
and $p[\ko_L(2)]=[\ko_L(2)]-[\ko_X(2)]+4[\ko_X(1)]-6[\ko_X]$. Hence, $\lambda_i\mapsto p[\ko_L(i)]$ defines
an isometric embedding $A_2\,\hookrightarrow K_{\rm top}'(X)$.

First, $H^4(X,\ZZ)_{\rm pr}\subset H^*(X,\QQ)$ is contained in $v(K_{\rm top}'(X))$. Indeed, $H^4(X,\ZZ)_{\rm pr}$ is spanned by
 classes of all vanishing spheres and those lift to $K_{\rm top}(X)$. 
 After fixing an isometry $E\oplus U^{\oplus 2}\oplus A_2(-1)\cong A_2^\perp\cong\Gamma\cong H^4(X,\ZZ)_{\rm pr}\subset K_{\rm top}'(X)$, this yields an isometric
embedding 
$\Gamma\oplus A_2\,\hookrightarrow K_{\rm top}'(X)$ and allows one to view $\mu_1,\mu_2\in A_2(-1)$ as classes in
$K_{\rm top}'(X)$.\\
Second, one needs to show that the class $(1/3)(\mu_1-\mu_2-\lambda_1+\lambda_2)\in (A_2(-1)\oplus A_2)\otimes\QQ\subset K_{\rm top}(X)\otimes\QQ$ is integral, i.e.\ contained in $K_{\rm top}(X)$. This presumably can be achieved algebraically on some particular cubic fourfold.\footnote{This mysterious class would need to satisfy the two equations
$(\lambda_1.\alpha)=-1$ and $(\lambda_2.\alpha)=1$.} Hence, the embedding in step one extends to an isometric embedding $\widetilde \Lambda\,\hookrightarrow K_{\rm top}'(X)$ of finite index. The unimodularity of $\widetilde\Lambda$ then implies the second assertion.
\end{proof}

\subsection{}\label{sec:HodgeAssoK3C}
 We now endow the various lattices considered above with  natural Hodge structures. Let us first briefly recall the
well known theory for K3 surfaces, see \cite[Ch.\ 16]{HuyK3} for further details and references.

For any complex K3 surface $S$ its second cohomology $H^2(S,\ZZ)$, which as a lattice is isomorphic to $\Lambda$,
comes with a natural Hodge structure of weight two given by the $(2,0)$-part $H^{2,0}(S)$. The full Hodge structure is then determined by
additionally requiring  $H^{1,1}(S)\perp H^{2,0}(S)$ with respect to the intersection pairing.

The global Torelli theorem for complex K3 surfaces asserts that two K3 surfaces $S$ and $S'$ are isomorphic if and only
if there exists a Hodge isometry $H^2(S,\ZZ)\cong H^2(S',\ZZ)$, i.e.\ an isomorphism of integral Hodge structures 
that is compatible  with the intersection pairing:
$$S\cong S'\Leftrightarrow\exists\,  H^2(S,\ZZ)\cong H^2(S',\ZZ)\text{ Hodge isometry. }$$
Let $(S,L)$ be a polarized K3 surface. Then the primitive cohomology $H^2(S,\ZZ)_{L\text{\rm -pr}}\subset H^2(S,\ZZ)$ is endowed
with the induced structure. Its $(2,0)$-part is again $H^{2,0}(S)$ and its $(1,1)$-part is the primitive
part of $H^{1,1}(S)$, i.e.\ the kernel of $(L.~)\colon H^{1,1}(S)\to\CC$. The polarized version of the global Torelli theorem is the
the statement that two polarized K3 surfaces $(S,L)$ and $(S',L')$ are isomorphic if and only if there exists a
Hodge isometry $H^2(S,\ZZ)\cong H^2(S',\ZZ)$ inducing $H^2(S,\ZZ)_{L\text{\rm -pr}}\cong H^2(S',\ZZ)_{L\text{\rm -pr}}$:
$$(S,L)\cong(S',L)\Leftrightarrow\exists\, H^2(S;\ZZ)\cong H^2(S',\ZZ),\, L\mapsto L', \text{ Hodge isometry }\\$$
The result will be stated again in moduli theoretic terms in Theorem \ref{thm:GTK3}.
\smallskip

\emph{Warning:} A Hodge isometry $H^2(S,\ZZ)_{L\text{\rm -pr}}\cong H^2(S',\ZZ)_{L\text{\rm -pr}}$ does not necessarily extend
to a Hodge isometry between the full cohomology. Hence, in general, the existence of a Hodge isometry between the primitive
Hodge structures of two polarized K3 surfaces does not imply that $(S,L)$ and $(S',L')$ are isomorphic. In fact, even
the unpolarized K3 surfaces $S$ and $S'$ may be non-isomorphic.

\smallskip

Next comes the Mukai Hodge structure $\widetilde H(S,\ZZ)$.
The underlying lattice is $H^*(S,\ZZ)$ with the sign change in $U_4=(H^0\oplus H^4)(S,\ZZ)$.
The Hodge structure of weight two is again given by the $(2,0)$-part being $\widetilde H^{2,0}(S)\coloneqq H^{2,0}(S)$ and the condition
that $\widetilde H^{1,1}(S)\perp H^{2,0}(S)$ with respect to the Mukai pairing. In particular, $U\cong U_4=(H^0\oplus H^4)(S,\ZZ)$ is contained
in $\widetilde H^{1,1}(S,\ZZ)$. The derived global Torelli theorem is the statement that for two projective K3 surfaces $S$ and $S'$ there
exists an exact, $\CC$-linear equivalence $\Db(S)\cong \Db(S')$ between their bounded derived categories of coherent sheaves if and only
of there exists a Hodge isometry $\widetilde H(S,\ZZ)\cong \widetilde H(S',\ZZ)$:
$$\Db(S)\cong \Db(S')\Leftrightarrow \exists\, \widetilde H(S,\ZZ)\cong\widetilde H(S',\ZZ)\text{ Hodge isometry.}$$

A twisted K3 surface $(S,\alpha)$ consists of a K3 surface $S$ together with a Brauer class $\alpha\in {\rm Br}(S)\cong H^2(S,\ko_S^\ast)$ (we work in the 
analytic topology). Choosing a lift $B\in H^2(S,\QQ)$ of $\alpha$ under the natural morphism
$H^2(S,\QQ)\to {\rm Br}(S)$ induced by the exponential sequence allows one to introduce
a natural Hodge structure $\widetilde H(S,\alpha,\ZZ)$ of weight two associated with $(S,\alpha)$.
As a lattice, this is just $\widetilde H(S,\ZZ)$, but the $(2,0)$-part is now given by
$\widetilde H^{2,0}(S,\alpha)\coloneqq \CC\,(\sigma+\sigma\wedge B)$, where
$0\ne\sigma\in H^{2,0}(S)$. This defines a Hodge structure by requiring, as before, that
$\widetilde H^{1,1}(S,\alpha)\perp \widetilde H^{2,0}(S,\alpha)$ with respect to the Mukai pairing.
Although the definition depends on the choice of $B$, the Hodge structures
induced by two different lifts $B$ and $B'$ of the same Brauer class $\alpha$ are Hodge isometric albeit not canonically, see \cite{HuyStel}.

The twisted version of the derived global Torelli theorem is the statement that the bounded derived categories
of twisted coherent sheaves on $(S,\alpha)$ and $(S',\alpha')$ are equivalent if and only
if there exists a Hodge isometry $\widetilde H(S,\alpha,\ZZ)\cong \widetilde H(S',\alpha',\ZZ)$ preserving the natural orientation
of the four positive directions, cf.\ \cite[Ch.\ 16.4]{HuyK3} and \cite{Rein}:
 $$\Db(S,\alpha)\cong \Db(S',\alpha')\Leftrightarrow \exists\,\widetilde H(S,\alpha,\ZZ)\cong\widetilde H(S',\alpha',\ZZ)\text{ oriented Hodge isometry.}$$
Next consider $H^4(X,\ZZ)$ and  $H^4(X,\ZZ)_{\rm pr}$  of a smooth cubic fourfold $X$. These are Hodge structures
of weight four determined by the one-dimensional $H^{3,1}(X)$ and the condition that $H^{3,1}(X)\perp H^{2,2}(X)$ with respect
to the intersection product. 

The global Torelli theorem for smooth cubic fourfolds, which we will state again as Theorem \ref{thm:GT3} in moduli theoretic terms,
is the statement that two smooth cubic fourfolds $X$ and $X'$ are isomorphic (as abstract complex varieties) if and only
if there exists a Hodge isometry $H^4(X,\ZZ)_{\rm pr}\cong H^4(X',\ZZ)_{\rm pr}$:
$$X\cong X'\Leftrightarrow \exists\, H^4(X,\ZZ)_{\rm pr}\cong H^4(X',\ZZ)_{\rm pr}\text{ Hodge isometry}.$$
 Note that any such Hodge isometry can be extended
to a Hodge isometry $H^4(X,\ZZ)\cong H^4(X',\ZZ)$ that maps $h_X^2$ to $\pm h_{X'}^2$. The situation here is easier compared to the case
of polarized K3 surfaces as the discriminant of $H^4(X,\ZZ)_{\rm pr}$ is just $\ZZ/3\ZZ$.\footnote{We will
encounter yet another Torelli theorem in Section
\ref{sec:GTHK4}.}

\smallskip

To relate $H^4(X,\ZZ)$ of a cubic fourfolds  to K3 surfaces one has to change the sign of the intersection product, so that
as abstract lattices $H^4(X,\ZZ)\cong\bar\Gamma$ and $H^4(X,\ZZ)_{\rm pr}\cong\Gamma$  (with an implicit sign change), and Tate shift the Hodge structure to obtain
$H^4(X,\ZZ)(1)$ and $H^4(X,\ZZ)_{\rm pr}(1)$, which are now Hodge structures of weight two.

\begin{definition} 
The integral Hodge structure $\widetilde H(X,\ZZ)$ of K3 type associated with a smooth cubic fourfold $X$ is
 the lattice $$\widetilde H(X,\ZZ)\coloneqq K_{\rm top}'(X)$$ 
with the Hodge structure of weight two given by $\widetilde H^{2,0}(X)\coloneqq v^{-1}(H^{3,1}(X))$ and the 
requirement that $\widetilde H^{1,1}(X)$ and $\widetilde H^{2,0}(X)$ are orthogonal with respect to the Mukai pairing on
$K_{\rm top}(X)$.
\end{definition}

The Mukai vector $K_{\rm top}(X)\otimes\QQ\congpf H^*(X,\QQ)$ induces an isometry
$$\widetilde H(X,\ZZ)= K'_{\rm top}(X)\cong\widetilde\Lambda\subset H^*(X,\QQ)$$
with $\widetilde\Lambda\subset H^*(X,\QQ)$ provided by (\ref{eqn:tildeLH}).
Observe that there is a natural isometric inclusion of Hodge structures
$$H^4(X,\ZZ)_{\rm pr}(1)\subset \widetilde H(X,\ZZ).$$
Moreover, the sublattice $A_2$ is algebraic, i.e.\ $A_2\subset\widetilde H^{1,1}(X,\ZZ)$, and its orthogonal
Hodge structure is $A_2^\perp\cong H^4(X,\ZZ)_{\rm pr}(1)$. Also note that according
to Remark \ref{rem:l1perp} $\lambda_1^\perp\subset \widetilde H(X,\ZZ)$
is a sub Hodge structure with underlying lattice  isomorphic to $\Lambda\oplus\ZZ(-2)$.

\begin{remark}\label{rem:KuznH}
Once the Kuznetsov category $\ka_X\subset \Db(X)$ is  introduced, one also writes
$\widetilde H(\ka_X,\ZZ)=\widetilde H(X,\ZZ)$. The notation $\widetilde H(X,\ZZ)$ is analogous to the notation $\widetilde H(S,\ZZ)$ for
K3 surfaces and the Hodge structure plays a similar role. In fact, as a consequence of the above discussion we know that as lattices
$\widetilde H(X,\ZZ)\cong \widetilde H(S,\ZZ)$ and the analogy goes further:
For a K3 surface, the algebraic part naturally contains a  hyperbolic plane:
$$U\cong (H^0\oplus H^4)(S,\ZZ)\,\hookrightarrow \widetilde H^{1,1}(S,\ZZ).$$
Similarly, for a smooth cubic fourfold the algebraic part naturally contains a copy of $A_2$:
$$v\colon A_2\cong \ZZ\, p[\ko_L(1)]\oplus\ZZ\, p[\ko_L(2)] \,\hookrightarrow \widetilde H^{1,1}(X,\ZZ).$$
Here, $p\colon K_{\rm top}(X)\to K'_{\rm top}(X)$ is the projection as in the proof of Proposition \ref{prop:ATKtop}, so the composition maps $\lambda\mapsto p[\ko_L(i)]\mapsto v(\lambda_i)$.
Their respective orthogonal complements are
$$H^2(S,\ZZ)=U^\perp\,\hookrightarrow \widetilde H(S,\ZZ)~\text{ and }~ H^4(X,\ZZ)_{\rm pr}(1)=A_2^\perp\,\hookrightarrow \widetilde H(X,\ZZ),$$
in terms of which the global Torelli theorem is formulated in both instances.
Also, $e_4-f_4=(1,0,-1)\in\widetilde H^{1,1}(S,\ZZ)$ and $v(\lambda_1)\in\widetilde H^{1,1}(X,\ZZ)$ are both algebraic classes
satisfying $(e_4-f_4)^2=2=(v(\lambda_1))^2$. Their orthogonal complements are isometric.
\end{remark}

\begin{definition}\label{def:assoK3C} Let $(S,L)$ be a polarized K3 surface and $X$ a smooth cubic fourfold.
\begin{enumerate}
\item[{\rm (i)}] We say $(S,L)$ and $X$ are \emph{associated}, $(S,L)\sim X$,
if there exists an isometric embedding of Hodge structures
\begin{equation}\label{eqn:HassettAsso}
H^2(S,\ZZ)_{L\text{\rm -pr}}\,\hookrightarrow H^4(X,\ZZ)_{\rm pr}(1).
\end{equation}
\item[{\rm (ii)}]  We say $S$ and $X$ are \emph{associated}, $S\sim X$,
if there exists a Hodge isometry
$$\widetilde H(S,\ZZ)\cong \widetilde H(X,\ZZ).$$
\item[{\rm (iii)}] For $\alpha\in{\rm Br}(S)$ we say that the twisted K3 surface $(S,\alpha)$ and $X$ are
 \emph{associated}, $(S,\alpha)\sim X$,
if there exists a Hodge isometry
$$\widetilde H(S,\alpha,\ZZ)\cong \widetilde H(X,\ZZ).$$
\end{enumerate}
\end{definition}
First observe the immediate implication:
$$(S,L)\sim X~\Rightarrow S\sim X.$$
Indeed, any isometric embedding (\ref{eqn:HassettAsso}) can be extended
to an isometry $\widetilde H(S,\ZZ)\cong \widetilde H(X,\ZZ)$. This follows from the existence of the hyperbolic plane $U\subset H^2(S,\ZZ)_{L\text{\rm -pr}}^\perp$, cf.\ \cite[Rem.\ 14.1.13]{HuyK3}.

 As an aside, observe that a K3 surface $S$ that is associated with a cubic fourfold
in any sense is necessarily projective. Indeed, if for example $S\sim X$, then $\widetilde H^{1,1}(S,\ZZ)\cong \widetilde H^{1,1}(X,\ZZ)$ contains
the positive plane $A_2$ and, therefore, $H^{1,1}(S,\ZZ)$ contains at least one class of positive square.

The key to link $S\sim X$, $(S,L)$, and $(S,\alpha)\sim X$ to the properties $(\ast\ast)$ and $(\ast\ast')$ is the following result
in \cite{AT} generalized to the twisted case in \cite{HuyComp}.
\begin{prop}[Addington--Thomas, Huybrechts]\label{prop:UUn}
Assume $X$ is a smooth cubic fourfold.
\begin{enumerate}
\item[{\rm (i)}] There exists a K3 surface $S$ with $S\sim X$ if and only if there exists a (primitive)
 embedding $U\,\hookrightarrow \widetilde H^{1,1}(X,\ZZ)$.
\item[{\rm (ii)}] There exists a twisted K3 surface $(S,\alpha)$ with $(S,\alpha)\sim X$ if and only if there exists an
embedding $U(n)\,\hookrightarrow \widetilde H^{1,1}(X,\ZZ)$ for some $n\ne 0$.
\end{enumerate}
\end{prop}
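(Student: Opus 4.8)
The plan is to translate each of the two statements into a purely lattice-theoretic question about the Mukai Hodge structure $\widetilde H(X,\ZZ)$ and then invoke the derived (twisted) global Torelli theorem for K3 surfaces recalled above. For (i), the forward direction is immediate: if $S\sim X$, then by definition there is a Hodge isometry $\widetilde H(S,\ZZ)\cong\widetilde H(X,\ZZ)$, and under it the primitive hyperbolic plane $U\cong(H^0\oplus H^4)(S,\ZZ)\subset\widetilde H^{1,1}(S,\ZZ)$ of Remark \ref{rem:KuznH} maps to a primitive embedding $U\,\hookrightarrow\widetilde H^{1,1}(X,\ZZ)$. For the converse, suppose $U\,\hookrightarrow\widetilde H^{1,1}(X,\ZZ)$ is a primitive embedding. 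Since $U$ is unimodular, the embedding splits, $\widetilde H(X,\ZZ)\cong U\oplus U^\perp$ as lattices-with-Hodge-structure, and $U^\perp$ carries a weight-two Hodge structure with $(U^\perp)^{2,0}=\widetilde H^{2,0}(X)$ one-dimensional. The lattice $U^\perp$ has signature $(2,19)$ and, being the orthogonal complement of a split $U$ in $\widetilde\Lambda\cong E\oplus U^{\oplus 4}$, is isomorphic to $E\oplus U^{\oplus 3}\cong\Lambda$; it is moreover an even lattice. By the surjectivity of the period map for K3 surfaces, there is a K3 surface $S$ (automatically projective, since $\widetilde H^{1,1}(X,\ZZ)\supset A_2$ contains a positive class, cf.\ the remark preceding the Proposition) with a Hodge isometry $H^2(S,\ZZ)\cong U^\perp$. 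Adjoining the hyperbolic plane and matching it to the chosen $U\subset\widetilde H(X,\ZZ)$ yields a Hodge isometry $\widetilde H(S,\ZZ)\cong\widetilde H(X,\ZZ)$, i.e.\ $S\sim X$.

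For (ii) the strategy is the same, but the splitting argument must be replaced by the twisted analogue. If $(S,\alpha)\sim X$, fix a $B$-field lift of $\alpha$; then $\widetilde H(S,\alpha,\ZZ)\cong\widetilde H(X,\ZZ)$ and the sublattice $H^2(S,\ZZ)\subset\widetilde H(S,\ZZ)=\widetilde H(S,\alpha,\ZZ)$ contains a copy of $U$ spanned by $e_4$ and a class of the form $f_4+n'e_4$; a suitable such $U$ is not orthogonal to the twisted $(2,0)$-part in general, but one checks as in \cite{HuyComp} that the algebraic part $\widetilde H^{1,1}(S,\alpha,\ZZ)$ nevertheless contains $U(n)$ for the integer $n=(\alpha$-order$)^2$ coming from the denominator of $B$, and this transports to an embedding $U(n)\,\hookrightarrow\widetilde H^{1,1}(X,\ZZ)$. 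Conversely, given $U(n)\,\hookrightarrow\widetilde H^{1,1}(X,\ZZ)$ with $n\ne 0$, one wants to realize $\widetilde H(X,\ZZ)$ as the twisted Mukai Hodge structure of some $(S,\alpha)$. The embedding $U(n)\,\hookrightarrow\widetilde H(X,\ZZ)$ need not split, but its saturation does; using the two isotropic generators $e,f$ of $U(n)$ and passing to $e^\perp/\ZZ e$ — or, more concretely, choosing a primitive isotropic $e\in\widetilde H^{1,1}(X,\ZZ)$ inside the saturation and looking at $e^\perp/\ZZ e$ with the induced weight-two Hodge structure — produces a K3-type Hodge structure on a lattice isomorphic to $\Lambda$. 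By surjectivity of the period map this is $H^2(S,\ZZ)$ for some K3 surface $S$, and the discrepancy between the honest Mukai Hodge structure $\widetilde H(S,\ZZ)$ and $\widetilde H(X,\ZZ)$ is encoded precisely by a $B$-field class $B\in H^2(S,\QQ)$ whose denominator divides $n$; the associated $\alpha=\exp(B)\in\Br(S)$ then satisfies $(S,\alpha)\sim X$.

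The main obstacle is the converse direction of (ii): extracting, from a (possibly non-split, possibly non-primitive) isometric embedding $U(n)\,\hookrightarrow\widetilde H^{1,1}(X,\ZZ)$, a genuine K3 surface together with a Brauer class whose twisted period matches $X$. One must control how the non-unimodularity of $U(n)$ interacts with the overlattice $\widetilde\Lambda$ — i.e.\ verify that $e^\perp/\ZZ e$ really is isomorphic to $\Lambda$ and not some other lattice in its genus — and one must match the Hodge-theoretic data: the line $\widetilde H^{2,0}(X)$ has to project to a genuine $H^{2,0}(S)$ under $e^\perp\to e^\perp/\ZZ e$, and the recovered $B$-field must be shown to have the right order. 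All of this is carried out in \cite{AT} for the untwisted case and in \cite{HuyComp} for the twisted refinement, so in the write-up I would state the lattice-embedding reductions carefully and then cite those references for the delicate period-matching step, rather than reproduce it.
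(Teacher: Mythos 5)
Your argument is correct and follows essentially the same route as the paper: split off the unimodular $U$ to get $U^\perp\cong\Lambda$, invoke surjectivity of the (twisted) period map, and extend back to a Hodge isometry of Mukai lattices, with the delicate converse of (ii) reduced to the twisted surjectivity statement exactly as the paper does. The only slip is the identification $n=(\operatorname{ord}\alpha)^2$ in the forward direction of (ii): the relevant $n$ is the minimal integer with $n\,(1,B,B^2/2)\in\widetilde H(S,\ZZ)$, which need not be the square of the order of $\alpha$ --- but since only the existence of some $n\ne 0$ is required, this does not affect the proof.
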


\begin{proof}
Any Hodge isometry $\widetilde H(S,\ZZ)\cong\widetilde H(X,\ZZ)$ yields a hyperbolic plane
$U\cong (H^0\oplus H^4)(S,\ZZ)\subset\widetilde H^{1,1}(S,\ZZ)\cong \widetilde H^{1,1}(X,\ZZ)$. Conversely, if
$U\subset \widetilde H^{1,1}(X,\ZZ)\subset\widetilde H(X,\ZZ)$, then as a lattice $U^\perp\cong\Lambda$. Moreover,
the Hodge structure of $\widetilde H(X,\ZZ)$ induces
a Hodge structure on $U^\perp\cong\Lambda$ which due to the surjectivity of the period map \cite[Thm.\ 7.4.1]{HuyK3} is Hodge isometric
to $H^2(S,\ZZ)$ for some K3 surface $S$. However, as before,
$U^\perp\cong H^2(S,\ZZ)$ extends to $\widetilde H(X,\ZZ)\cong \widetilde H(S,\ZZ)$.
This proves (i). 

For (ii), again one direction is easy, as $\widetilde H^{1,1}(S,\ZZ)$ contains the B-field shift
of $(H^0\oplus H^4)(S,\ZZ)$, cf.\ \cite[Ch.\ 14]{HuyK3}.
 More precisely,  $\widetilde H^{1,1}(S,\alpha,\ZZ)=
(\exp(B)\,\widetilde H^{1,1}(S,\QQ))\cap \widetilde H(S,\ZZ)$, which clearly contains the lattice
$(\langle1,B,B^2/2\rangle\cap \widetilde H(S,\ZZ))\oplus H^4(S,\ZZ)\cong U(n)$, where $n$ is minimal with $n\,(1,B,B^2)\in \widetilde H(S,\ZZ)$.
The other direction needs a surjectivity statement for twisted K3 surfaces which is an easy
consequence of the surjectivity of the untwisted period map.
\end{proof}

\begin{prop}
Assume a smooth cubic fourfold $X$ is associated with some K3 surface $S$, so $S\sim X$. Then there exists a polarized
K3 surface $(S',L')\sim X$:
$$\widetilde H(S,\ZZ)\cong\widetilde H(X,\ZZ)~~\Rightarrow~~H^2(S',\ZZ)_{L'\text{\rm -pr}}\hookrightarrow H^4(X,\ZZ)_{\rm pr}.$$
\end{prop}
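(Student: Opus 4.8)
The plan is to turn the Hodge isometry $\widetilde H(S,\ZZ)\cong\widetilde H(X,\ZZ)$ into a Noether--Lefschetz vector $v\in\Gamma$ whose discriminant $d$ satisfies $(\ast\ast)$, and then to produce $(S',L')$ from the surjectivity of the period map. By Definition \ref{def:assoK3C}(ii) the hypothesis $S\sim X$ gives a Hodge isometry $\widetilde H(S,\ZZ)\cong\widetilde H(X,\ZZ)$, so Proposition \ref{prop:UUn}(i) yields a primitive embedding $U\,\hookrightarrow\widetilde H^{1,1}(X,\ZZ)$. Feeding this $U$, together with the distinguished algebraic copy $A_2\subset\widetilde H^{1,1}(X,\ZZ)$ of Remark \ref{rem:KuznH}, into Lemma \ref{lem:UA2AT} produces an isometric embedding $U'\,\hookrightarrow\overline{A_2+U}$ with $\rk(A_2+U')=3$. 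Since $A_2$ and $U$ are of type $(1,1)$, so is $M\coloneqq\overline{A_2+U'}=\overline{A_2+U}$, which is therefore a primitively embedded rank-three sublattice of $\widetilde H^{1,1}(X,\ZZ)$ of signature $(2,1)$ (it contains the positive-definite $A_2$ and a negative direction from $U'$).

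The next step is to recognise $M$ as some $L_v$. Let $v$ generate the rank-one negative-definite lattice $M\cap\Gamma=M\cap A_2^\perp$. As $M$ and $\Gamma$ are saturated in $\widetilde\Lambda$, the vector $v$ is primitive in $\Gamma$, and with $(v)^2<0$ it is a Noether--Lefschetz vector; it is algebraic. Since $A_2\oplus\ZZ v\subset M$ has finite index and $M$ is saturated, $L_v=\overline{A_2\oplus\ZZ v}=M$, and $U'\subset L_v$ is a primitive sublattice (any isometric embedding of the unimodular lattice $U$ being primitive). Hence, with $d\coloneqq{\rm disc}(L_v)$ (so $d\equiv 0,2\ (6)$ by Lemma \ref{lem:HassettKv}), the lattice $L_v$, which by Proposition \ref{prop:HassettEichler} is isometric to $L_d$, admits a primitive embedding of a hyperbolic plane; i.e.\ condition $(\ast\ast)$ holds for $d$.

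It remains to transport Hodge structures. Because $v$ and $A_2$ are algebraic, $\widetilde H^{2,0}(X)\subset L_v^{\perp}\otimes\CC$, so $\Gamma_v\coloneqq L_v^{\perp}\subset\widetilde H(X,\ZZ)$ inherits a weight-two Hodge structure of K3 type with $\widetilde H^{2,0}(X)$ as $(2,0)$-part, and by construction $\Gamma_v\subset A_2^\perp=H^4(X,\ZZ)_{\rm pr}(1)$ as Hodge structures. Running the argument proving Lemma \ref{lem:OOLamL} for $L_v$ (with the primitive $U'\subset L_v$ in the role of $U\subset L_d$) gives a lattice isometry $\Gamma_v\cong\Lambda_d$. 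Fixing a primitive $\ell\in\Lambda$ with $(\ell)^2=d$, so that $\ell^\perp\cong\Lambda_d$, one transports the Hodge structure onto $\ell^\perp\subset\Lambda$ and declares $\ell$ of type $(1,1)$, obtaining a K3-type Hodge structure on $\Lambda$ whose period line $\sigma$ satisfies $(\sigma)^2=0$ and $(\sigma.\bar\sigma)>0$. By the surjectivity of the period map \cite[Thm.\ 7.4.1]{HuyK3} there is a K3 surface $S'$ with a Hodge isometry $H^2(S',\ZZ)\cong\Lambda$ sending $\ell$ to a class $L'\in\NS(S')$ with $(L')^2=d>0$; after acting by $\pm{\rm id}$ and a Weyl-group element one may assume $L'$ is at least big and nef (and, on a general such $X$, ample), and then $H^2(S',\ZZ)_{L'\text{-pr}}\cong\Lambda_d\cong\Gamma_v\,\hookrightarrow H^4(X,\ZZ)_{\rm pr}(1)$ is the embedding of Hodge structures required in Definition \ref{def:assoK3C}(i), i.e.\ $(S',L')\sim X$.

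The serious lattice-theoretic content is carried entirely by Lemma \ref{lem:UA2AT}, which is assumed, so the remaining work is bookkeeping; the one point deserving real care is the ampleness of $L'$. A priori the period line $\sigma$ constructed above could lie on a Heegner divisor of the degree-$d$ polarized period domain, where $L'$ is only big and nef rather than ample; this is handled by the standard surjectivity statement for quasi-polarized K3 surfaces together with reflecting $L'$ into the closure of the ample cone via the chamber decomposition of the positive cone, and I would not spell it out. A second, purely routine, check is that the chain of lattice isometries $L_v=M$, $\Gamma_v\cong\Lambda_d\cong\ell^\perp$, and $H^2(S',\ZZ)\cong\Lambda$ is arranged so that $\widetilde H^{2,0}(X)$ is carried precisely onto the period of $(S',L')$, which makes the final isometry a Hodge isometry.
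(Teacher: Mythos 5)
Your proof is correct and follows essentially the same route as the paper: the whole weight is carried by Lemma \ref{lem:UA2AT} applied to $A_2$ and the hyperbolic plane $U$ coming from the Hodge isometry, followed by the surjectivity of the period map. The paper is more direct at the end --- it realizes $U'^\perp$ with its induced Hodge structure as $H^2(S',\ZZ)$ and then takes $H^2(S',\ZZ)\cap A_2^\perp$ as the polarized primitive lattice --- whereas you detour through the Noether--Lefschetz vector $v$, the condition $(\ast\ast)$, and the identification $\Gamma_v\cong\Lambda_d$; this costs extra bookkeeping but buys nothing, since the isometry $\Gamma_v\cong\Lambda_d$ is not actually needed to produce the embedding $H^2(S',\ZZ)_{L'\text{-pr}}\hookrightarrow H^4(X,\ZZ)_{\rm pr}(1)$. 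Two small points: the asserted equality $\overline{A_2+U'}=\overline{A_2+U}$ is false when $\rk(A_2+U)=4$ (the lemma does not assume $\rk(A_2+U)=3$), but this is harmless since you only ever use $M=\overline{A_2+U'}$; and your proposed fix for the ampleness of $L'$ by Weyl reflections is insufficient --- a big and nef class orthogonal to an algebraic $(-2)$-class stays on a wall under the Weyl group. The correct argument, used in the paper's proof of Corollary \ref{cor:numchar}, is that such a $(-2)$-class would map to a class of square $2$ in $H^{2,2}(X,\ZZ)_{\rm pr}$, which is excluded for smooth $X$ by Voisin, so $L'$ is automatically ample.
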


\begin{proof} 
Assume $S\sim X$. Then there exists a Hodge isometry $\widetilde H(S,\ZZ)\cong\widetilde H(X,\ZZ)$.
On  the left hand side, one finds $U\cong (H^0\oplus H^4)(S,\ZZ)\subset \widetilde H^{1,1}(S,\ZZ)$ and, on the right hand side, $A_2\subset\widetilde H^{1,1}(X,\ZZ)$.
Consider the saturation of the  sum of both as a lattice $\overline{U+A_2}\subset \widetilde H^{1,1}(S,\ZZ)$. According to
 Lemma \ref{lem:UA2AT}, there exists another hyperbolic plane $U'\subset \overline{U+A_2}$ with $\rk(U'+A_2)=3$.
 Using the surjectivity of the period map, one finds another K3 surface $S'$ and a Hodge isometry
 \begin{equation}\label{eqn:newHI}
 \widetilde H(S',\ZZ)\cong
 \widetilde H(S,\ZZ)\cong \widetilde H(X,\ZZ)
 \end{equation} inducing $H^2(S',\ZZ)\cong U'^\perp$. But then $H^2(S',\ZZ)\cap A_2^\perp\subset H^2(S',\ZZ)$ is of corank one
 and we can assume it to be of the form $H^2(S',\ZZ)_{L'\text{\rm-pr}}$. However, being contained in $A_2^\perp$ implies that under (\ref{eqn:newHI})
 $H^2(S',\ZZ)_{L'\text{\rm -pr}}$ embeds into $H^4(X,\ZZ)_{\rm pr}(1)$, which ensures $(S',L')\sim X$.
\end{proof}

\begin{cor}
A smooth cubic fourfold $X$ is associated with some polarized K3 surface, $(S,L)\sim X$, if and only if there exists an 
isometric  embedding $U\,\hookrightarrow \widetilde H^{1,1}(X,\ZZ)$.\qed
\end{cor}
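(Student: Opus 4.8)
The plan is to deduce this corollary by combining the preceding Proposition with the already-established chain of implications, so that no new geometric input is required. First I would record what needs to be shown: the equivalence between the existence of a polarized K3 surface $(S,L)$ with $(S,L)\sim X$ and the existence of an isometric embedding $U\,\hookrightarrow \widetilde H^{1,1}(X,\ZZ)$.

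For the forward direction, assume $(S,L)\sim X$. By the immediate implication recorded right after Definition \ref{def:assoK3C}, namely $(S,L)\sim X \Rightarrow S\sim X$, we obtain a Hodge isometry $\widetilde H(S,\ZZ)\cong \widetilde H(X,\ZZ)$. Then Proposition \ref{prop:UUn}(i) (in its easy direction) produces a hyperbolic plane $U\cong (H^0\oplus H^4)(S,\ZZ)\subset \widetilde H^{1,1}(S,\ZZ)\cong \widetilde H^{1,1}(X,\ZZ)$, which is exactly the asserted embedding. Alternatively, one can shortcut this by invoking Proposition \ref{prop:UUn}(i) directly: $(S,L)\sim X \Rightarrow S\sim X \Rightarrow$ existence of $U\,\hookrightarrow \widetilde H^{1,1}(X,\ZZ)$.

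For the reverse direction, assume there is an isometric embedding $U\,\hookrightarrow \widetilde H^{1,1}(X,\ZZ)$. Since any isometric embedding of $U$ into the Mukai lattice is automatically primitive (it splits off as a direct summand, as used repeatedly above via \cite[Ex.\ 14.0.3]{HuyK3} and \cite[Thm.\ 14.1.12]{HuyK3}), Proposition \ref{prop:UUn}(i) yields a K3 surface $S$ with $S\sim X$. Now apply the preceding Proposition: from $S\sim X$ one extracts a polarized K3 surface $(S',L')$ with $(S',L')\sim X$. This closes the loop and proves the equivalence.

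The only point requiring a little care — and the place where I would be most careful to cite correctly rather than hand-wave — is the primitivity issue in the reverse direction: Proposition \ref{prop:UUn}(i) is stated with ``(primitive)'' in parentheses, so one must note that an isometric embedding $U\,\hookrightarrow \widetilde H^{1,1}(X,\ZZ)\subset \widetilde H(X,\ZZ)$ is necessarily primitive because $U$ is unimodular and hence splits as a direct summand of any even lattice containing it isometrically. Everything else is a formal concatenation of results already in hand, so there is no genuine obstacle; the corollary is essentially a bookkeeping statement packaging Proposition \ref{prop:UUn}(i) together with the preceding Proposition.
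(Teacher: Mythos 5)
Your proof is correct and follows exactly the route the paper intends for this corollary (which is why it carries an immediate \qed): the forward direction is the observation $(S,L)\sim X\Rightarrow S\sim X$ combined with Proposition \ref{prop:UUn}(i), and the reverse direction is Proposition \ref{prop:UUn}(i) followed by the preceding proposition producing $(S',L')\sim X$ from $S\sim X$. Your remark that any isometric embedding of the unimodular lattice $U$ is automatically primitive (hence the parenthetical ``(primitive)'' in Proposition \ref{prop:UUn}(i) is no obstacle) is accurate and is the right point to flag.
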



\section{Period domains and moduli spaces}
The comparison of the Hodge theory of K3 surfaces and cubic fourfolds is now considered in families. Via period maps, this leads to
an algebraic correspondence between the moduli space of polarized K3 surfaces of certain degrees and the moduli space of cubic fourfolds.
The approach has been initiated by Hassett \cite{HassComp} and has turned out to be very valuable indeed.

\subsection{} Here is a very brief reminder on some results, mostly due to Borel and Baily--Borel, on arithmetic quotients of orthogonal type.
Let $(N, (~.~))$ be a lattice of signature $(2,n_-)$ and set $V\coloneqq N\otimes\RR$. Then the  period domain $D_N$
associated with $N$ is the Grass\-mannian of positive, oriented planes $W\subset V$, which alternatively can be described
as
\begin{eqnarray*}
D_N&\cong&\{~Êx\mid (x)^2=0,~(x.\bar x)>0~Ê\}\subset\PP(N\otimes \CC)\\
&\cong&\OO(2,n_-)/(\OO(2)\times\OO(n_-)).
\end{eqnarray*}
By definition, the period domain $D_N$ associated with $N$ has the structure of a complex manifold. This is turned into an algebraic
statement by the following fundamental result \cite{BB}. It uses the fact that under the assumption on the signature of $N$ the orthogonal
group $\OO(N)$ acts properly discontinuously on $D_N$.

\begin{thm}[Baily--Borel]\label{thm:BB}
Assume $G\subset\OO(N)$ is a torsion free subgroup of finite index. Then the quotient
$$G\setminus D_N$$
has the structure of a smooth, quasi-projective complex variety.
\end{thm}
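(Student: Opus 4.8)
The plan is to treat the Baily--Borel theorem as a black box for the full construction of the compactification and its projectivity, and to organize the proof around showing that $G\setminus D_N$ is a quasi-projective \emph{smooth} variety when $G$ is torsion free of finite index. First I would recall the basic structure: since $N$ has signature $(2,n_-)$, the domain $D_N$ is one of the two connected components of $\{\,x\in\PP(N\otimes\CC)\mid (x)^2=0,\ (x.\bar x)>0\,\}$, a Hermitian symmetric domain of type IV, and the isometry group acts on it. By a classical lemma (see e.g. \cite[Prop.\ 14.1.16]{HuyK3} or Minkowski's theorem on finite subgroups of $\mathrm{GL}_m(\ZZ)$), $\OO(N)$ is a discrete subgroup of the real Lie group $\OO(2,n_-)$ acting properly discontinuously on $D_N$, hence so does any subgroup $G$; this is the input explicitly flagged before the statement.

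The main step is then the following: for a torsion free discrete group $G$ acting properly discontinuously on a complex manifold, the quotient $G\setminus D_N$ is again a complex manifold and the projection $D_N\to G\setminus D_N$ is a covering map. Torsion freeness is exactly what guarantees that the action is \emph{free} — any element fixing a point would generate a finite cyclic stabilizer, hence would be torsion — so there are no orbifold singularities and one gets an honest smooth complex manifold by transporting charts. At this stage $G\setminus D_N$ is a smooth complex analytic space; what remains is algebraicity. For this I would invoke Baily--Borel directly: they construct a normal projective variety $\overline{G\setminus D_N}$, the Satake--Baily--Borel compactification, containing $G\setminus D_N$ as a Zariski-open subset, by exhibiting a graded ring of automorphic forms of sufficiently high weight whose $\mathrm{Proj}$ realizes the compactification and separates points and tangent directions on $D_N$. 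Being a Zariski-open subset of a projective variety, $G\setminus D_N$ is quasi-projective; combined with the smoothness established above, it is a smooth quasi-projective complex variety. (Alternatively, once one knows $G\setminus D_N$ is a normal quasi-projective variety, smoothness follows from the local analytic model being a manifold, since a normal variety that is analytically smooth is smooth.)

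I would present the proof as: (1) discreteness and proper discontinuity of the $G$-action; (2) freeness of the action from torsion freeness, yielding a smooth complex manifold quotient; (3) the Baily--Borel construction of the projective compactification, using finite index of $G$ to ensure cofiniteness of the volume and hence a rich enough ring of modular forms; (4) conclude quasi-projectivity and smoothness. The hard part — and the part I would not reprove — is step (3), the actual Baily--Borel theorem: producing enough automorphic forms and showing that $\mathrm{Proj}$ of their ring is a compactification with the right properties is a substantial piece of analysis (Poincar\'e/Eisenstein series, Koecher principle, Fourier--Jacobi expansions). The genuinely elementary content, and essentially all one needs to verify here given the cited references, is that torsion freeness upgrades ``properly discontinuous action'' to ``free action,'' which is what removes the singularities and makes ``variety'' rather than ``stack'' or ``orbifold'' the correct statement.
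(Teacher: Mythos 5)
Your proposal is correct and matches the paper's treatment: the paper likewise observes that proper discontinuity forces finite stabilizers, so torsion freeness makes the action free and the quotient smooth, and it defers the genuinely hard part --- the Zariski-open embedding into a projective variety --- to the cited work of Baily--Borel. Your extra commentary on the ring of automorphic forms is a reasonable gloss on that black box but is not reproved in the paper either.
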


As $G$ acts properly discontinuously as well, the stabilizers are finite and hence trivial. This already proves the
smoothness of the quotient $G\setminus D_N$. The difficult part of the theorem is to find a Zariski open embedding into a complex projective
variety. 

Finite index subgroups $G\subset\OO(N)$ with torsion are relevant, too. In this situation, one uses Minkowski's theorem stating
that the map $\pi_p\colon{\rm Gl}(n,\ZZ)\to {\rm Gl}(n,\FF_p)$, $p\geq 3$, is injective on finite subgroups or, equivalently, that
its kernel is torsion free. Hence, for every finite index subgroup $G\subset\OO(N)$ there exists
a normal and torsion free subgroup $G_0\coloneqq G\cap{\rm Ker}(\pi_p)\subset G$ of finite index.

\begin{cor}
Assume $G\subset \OO(N)$ is a subgroup of finite index. Then the quotient $G\setminus D_N$
has the structure of a normal, quasi-projective complex variety with finite quotient singularities.\qed
\end{cor}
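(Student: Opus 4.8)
The plan is to reduce to the torsion-free case already handled by Theorem~\ref{thm:BB} and then pass to a further finite quotient. Fix a prime $p\geq 3$, choose a $\ZZ$-basis of $N$ in order to view $\OO(N)$ as a subgroup of ${\rm Gl}(r,\ZZ)$ with $r\coloneqq\rk(N)=2+n_-$, and put $G_0\coloneqq G\cap{\rm Ker}(\pi_p)$ for the reduction homomorphism $\pi_p\colon{\rm Gl}(r,\ZZ)\to{\rm Gl}(r,\FF_p)$. By Minkowski's theorem ${\rm Ker}(\pi_p)$ is torsion free and normal in ${\rm Gl}(r,\ZZ)$, so $G_0$ is a normal, torsion-free, finite-index subgroup of $G$, hence also of $\OO(N)$. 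The finite group $\Delta\coloneqq G/G_0$ then acts on $Y\coloneqq G_0\setminus D_N$, and $G\setminus D_N=\Delta\setminus Y$ as topological spaces.

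By Theorem~\ref{thm:BB}, $Y$ is a smooth, quasi-projective complex variety, so it remains to realize $\Delta\setminus Y$ as such a variety and to describe its singularities. For the first point the key input is the classical affine-orbit lemma: on a quasi-projective variety every finite set of points, in particular every $\Delta$-orbit, lies in a $\Delta$-invariant affine open subset. Covering $Y$ by such opens $U=\Spec A$, the invariant rings $A^\Delta$ are finitely generated by Noether's finiteness theorem, and the affine schemes $\Spec(A^\Delta)$ glue to a geometric quotient $\Delta\setminus Y$ in the category of varieties. Averaging an ample line bundle on $Y$ over $\Delta$ and descending it along the finite morphism $Y\to\Delta\setminus Y$ produces an ample line bundle downstairs, so $\Delta\setminus Y$ is quasi-projective.

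For the local structure, the quotient is normal because the ring of invariants $A^\Delta$ of a normal domain under a finite group is again a normal domain. Since $Y$ is smooth, a finite stabilizer $H\coloneqq{\rm Stab}_\Delta(y)$ of a point $y\in Y$ acts, after analytic linearization near $y$, linearly on the tangent space $T_yY$, and near the image of $y$ the quotient $\Delta\setminus Y$ is analytically isomorphic to the germ of $T_yY/H$ at the origin; this is precisely the statement that $\Delta\setminus Y$ has only finite quotient singularities. Note that $H$ coincides with the stabilizer of the $G$-action on $D_N$ at a lift of $y$, which is automatically finite since $\OO(N)$ acts properly discontinuously on $D_N$; hence the singular locus of $G\setminus D_N$ is the image of the non-free locus of the $G$-action.

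The step I expect to be the main obstacle is the first one: that a geometric quotient of a quasi-projective variety by a finite group exists within the category of quasi-projective varieties. This genuinely uses quasi-projectivity, through the affine-orbit lemma, and fails for arbitrary varieties. A minor additional check is that the variety structure so obtained on $\Delta\setminus Y$ carries the correct Hausdorff topology of $G\setminus D_N$, which holds because the gluing is $\Delta$-equivariant and compatible with the analytic topology.
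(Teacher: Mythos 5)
Your proof follows exactly the route the paper intends: pass to the normal, torsion-free, finite-index subgroup $G_0=G\cap{\rm Ker}(\pi_p)$ via Minkowski's theorem, apply Theorem~\ref{thm:BB} to $G_0\setminus D_N$, and then quotient by the finite group $G/G_0$. The paper leaves the finite-quotient step (existence of the quasi-projective quotient, normality, and the local description as $T_yY/H$) as standard; you have simply supplied those details correctly.
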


We remark that not only  these arithmetic quotients, but also holomorphic maps into them are algebraic. This is the
following remarkable GAGA style result, see \cite{Borel}.

\begin{thm}[Borel]\label{thm:BorelJDG}
Assume $G\subset\OO(N)$ is a torsion free subgroup of finite index. Then any holomorphic map
$\varphi\colon Z\to G\setminus D_N$  from a complex variety $Z$ is regular.
\end{thm}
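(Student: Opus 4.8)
The plan follows Borel's original argument in \cite{Borel}: reduce the statement to the algebraicity of holomorphic maps between projective varieties, the bridge being an extension theorem across a normal crossings boundary. Since regularity is a Zariski-local condition on the source, after passing to an affine open cover and applying Hironaka's resolution of singularities (together with a routine descent of regularity along proper modifications and normalization) one may assume that $Z$ is smooth and fix a smooth projective compactification $\bar Z\supset Z$ for which $\partial Z\coloneqq \bar Z\setminus Z$ is a simple normal crossings divisor. Recall also that the Baily--Borel construction underlying Theorem \ref{thm:BB} in fact produces a \emph{projective} compactification $(G\setminus D_N)^{\ast}\supset G\setminus D_N$, in which $G\setminus D_N$ is Zariski open, cf.\ \cite{BB}.

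The core of the argument is to extend $\varphi$ to a holomorphic map $\bar\varphi\colon \bar Z\to (G\setminus D_N)^{\ast}$. This is a local question near a point of $\partial Z$, where $Z$ is biholomorphic to $(\Delta^{\ast})^{k}\times\Delta^{n-k}$ and one must show that a holomorphic map $(\Delta^{\ast})^{k}\times\Delta^{n-k}\to G\setminus D_N$ extends holomorphically over the punctures with values in $(G\setminus D_N)^{\ast}$. Here the hypothesis that $G$ be torsion free is essential (after the harmless replacement of $G$ by a neat subgroup of finite index, if needed): it makes $D_N\to G\setminus D_N$ an unramified cover, so that the local monodromy of $\varphi$ around each branch of $\partial Z$ is defined and unipotent, and it controls the stabilizers of the boundary strata of $(G\setminus D_N)^{\ast}$ together with their unipotent radicals. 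One then invokes Borel's metric estimates: an invariant K\"ahler metric on $G\setminus D_N$ is complete, and its distance function grows like $-\log$ of the natural boundary coordinates of $(G\setminus D_N)^{\ast}$, while by the Schwarz--Pick--Ahlfors lemma $\varphi$ is distance non-increasing, up to a fixed constant, once each $\Delta^{\ast}$ carries its Poincar\'e metric. Comparing the two estimates bounds the growth of $\varphi$ toward the punctures logarithmically, and combined with the explicit description of neighbourhoods of the boundary strata of $(G\setminus D_N)^{\ast}$ as quotients of Siegel-domain neighbourhoods this yields the holomorphic extension $\bar\varphi$.

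Granting the extension, the conclusion is quick. Both $\bar Z$ and $(G\setminus D_N)^{\ast}$ are projective complex varieties and $\bar\varphi$ is holomorphic, so its graph is a closed analytic subset of the projective variety $\bar Z\times (G\setminus D_N)^{\ast}$, hence algebraic by Chow's theorem; equivalently, GAGA shows $\bar\varphi$ is a morphism of projective varieties. Restricting to the Zariski open $Z\subset\bar Z$, and using that $\varphi=\bar\varphi|_Z$ takes values in the Zariski open $G\setminus D_N\subset (G\setminus D_N)^{\ast}$, one gets that $\varphi\colon Z\to G\setminus D_N$ is a morphism of quasi-projective varieties, i.e.\ regular.

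The main obstacle is the extension step, and its difficulty is genuinely analytic rather than formal: one needs sharp control of the invariant metric near \emph{all} boundary components of the Baily--Borel compactification (which for $D_N$ of orthogonal type include both the $0$-dimensional and the $1$-dimensional cusps), and one must bootstrap the one-variable estimates over $\Delta^{\ast}$ into a statement over the higher-dimensional punctured polydisc. For this analytic core, which we do not reproduce, we refer to \cite{Borel}.
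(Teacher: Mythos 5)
The paper offers no proof of this statement at all --- it is quoted from Borel's 1972 paper and simply cited --- so there is nothing internal to compare your argument against; what you have written is a faithful outline of Borel's own proof (compactify the source with SNC boundary via Hironaka, extend over the punctured polydiscs into the Baily--Borel compactification using the Ahlfors--Schwarz lemma against the negatively curved invariant metric, then conclude by Chow/GAGA and restriction), with the analytic core correctly identified and deferred to the reference, which is exactly the level of detail the paper itself adopts. One small caution: your parenthetical replacement of $G$ by a neat subgroup of finite index is not as ``harmless'' as stated, since it changes the target quotient and forces you to lift $\varphi$ through a finite cover of $Z$ and then descend regularity --- precisely the subtlety the paper flags in Remark \ref{rem:BBnontf}; fortunately Borel's extension theorem is proved for torsion-free arithmetic groups, which is the hypothesis here, so no such replacement is needed. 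Likewise the reduction to smooth $Z$ via resolution requires the (routine but not free) argument that a holomorphic map whose composition with a proper modification is regular has algebraic graph and is therefore itself regular; you acknowledge this, and it is fine.
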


\begin{remark}\label{rem:BBnontf}
Often, the result is applied to holomorphic maps to singular quotients $G\setminus D_N$, i.e.\ in situations when $G$
is not necessarily torsion free. This is covered by the above only when $Z\to G\setminus D_N$ is induced by
a holomorphic map $Z'\to G_0\setminus D_N$, where $Z'\to Z$ is a finite quotient and $G_0\subset G$ is a 
normal, torsion free subgroup of finite index.
\end{remark}

\subsection{} 
We shall be interested in (at least) three different types of period domains: For polarized K3 surfaces and
for (special) smooth cubic fourfolds. These are the period domains associated with the lattices
$\Gamma$, $\Gamma_d$, and $\Lambda_d$:
$$D\subset \PP(\Gamma\otimes\CC),~ D_d\subset\PP(\Gamma_d\otimes\CC),\text{ and } Q_d\subset\PP(\Lambda_d\otimes \CC).$$

These period domains are endowed with the natural action of the corresponding orthogonal groups $\OO(\Gamma)$,
$\OO(\Gamma_d)$, and $\OO(\Lambda_d)$ and we will be interested in the following quotients by
distinguished finite index subgroups of those:
$$\kc\coloneqq \tilde\OO(\Gamma)\setminus D=\OO(\Gamma)\setminus D,$$
$$\tilde{\!\kc}_d\coloneqq \tilde\OO(\Gamma,K_d)\setminus D_d, ~~~
\, \cctwotilde_d\coloneqq \tilde\OO(\Gamma,v_d)\setminus D_d, \text{ and }$$
$$ \km_d\coloneqqÊ\tilde\OO(\Lambda_d)\setminus Q_d.$$
For the first equality note that $\tilde\OO(\Gamma)\subset\OO(\Gamma)$ is of index two, but $-{\rm id}\in
\OO(\Gamma)\setminus  \tilde\OO(\Gamma)$ acts trivially on $D$. The subgroup $Ê\tilde\OO(\Lambda_d)\subset\OO(\Lambda_d)$
is defined analogously to (\ref{eqn:tildeOO}).

Due to Theorem \ref{thm:BB} and \ref{thm:BorelJDG}, see also Remark \ref{rem:BBnontf}, the induced maps
$\cctwotilde_d\twoheadrightarrow\,\tilde{\!\kc}_d\to\kc$
are regular morphisms between normal quasi-projective varieties.
The image in $\kc$ shall be denoted by $\kc_d$, so that
$$\cctwotilde_d\twoheadrightarrow\,\tilde{\!\kc}_d\twoheadrightarrow\kc_d\subset\kc.$$ The condition $(\ast)$
will in the sequel be interpreted as the condition that $\kc_d\ne\emptyset$.

\begin{cor}[Hassett] Assume $d$ satisfies $(\ast)$.
The naturally induced maps
$$\cctwotilde_d\twoheadrightarrow\,\tilde{\!\kc}_d\twoheadrightarrow\kc_d$$
are surjective, finite, and algebraic. 

Furthermore, $\,\tilde{\!\kc}_d\twoheadrightarrow\kc_d$ is the normalization of $\kc_d$
and  $\cctwotilde_d\to\,\tilde{\!\kc}_d$ is a finite morphism   between normal varieties, which is an isomorphism
if $d\equiv2\,(6)$ and of degree two if $d\equiv0\,(6)$.
\end{cor}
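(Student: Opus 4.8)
The statement is essentially a packaging of standard facts about arithmetic quotients together with the index computation of Lemma \ref{lem:KDOO}. I would proceed in four steps.

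First, I would establish surjectivity and finiteness of the two maps. Surjectivity of $\cctwotilde_d\twoheadrightarrow\,\tilde{\!\kc}_d$ is immediate since both are quotients of the same period domain $D_d$ by the groups $\tilde\OO(\Gamma,v_d)\subset\tilde\OO(\Gamma,K_d)$, and the first is contained in the second. Surjectivity of $\,\tilde{\!\kc}_d\twoheadrightarrow\kc_d$ is by definition of $\kc_d$ as the image. Finiteness of $\cctwotilde_d\to\,\tilde{\!\kc}_d$ follows because $\tilde\OO(\Gamma,v_d)\subset\tilde\OO(\Gamma,K_d)$ is of finite index — indeed of index one or two by Lemma \ref{lem:KDOO} — so the map is the quotient by a finite group acting on $\cctwotilde_d$, hence finite. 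For the finiteness of $\,\tilde{\!\kc}_d\to\kc$ one observes that a point of $D_d$ sits inside $D$ via the primitive embedding $\Gamma_d\,\hookrightarrow\Gamma$ extended by declaring the orthogonal complement to be of Hodge--Tate type; the fibres of $\,\tilde{\!\kc}_d\to\kc_d$ over a very general point correspond to the finitely many $\tilde\OO(\Gamma)$-conjugates of $\Gamma_d$ inside a given period, and finiteness follows from the discreteness of $\OO(\Gamma)$ together with properness of the action. Algebraicity of all maps is Theorems \ref{thm:BB} and \ref{thm:BorelJDG}, using Remark \ref{rem:BBnontf} to pass through torsion-free subgroups.

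Second, I would identify $\,\tilde{\!\kc}_d\to\kc_d$ with the normalization. Since $\,\tilde{\!\kc}_d$ is normal (it is an arithmetic quotient of orthogonal type, hence normal with finite quotient singularities by the Corollary to Theorem \ref{thm:BB}) and the map is finite and surjective onto $\kc_d$, it suffices to check that it is generically one-to-one, i.e. birational onto $\kc_d$. This amounts to showing that for a very general $x\in D_d$, the only elements of $\tilde\OO(\Gamma)$ sending the period of $x$ (viewed in $D$) to itself and preserving $\Gamma_d$ are those already in $\tilde\OO(\Gamma,K_d)$ — which holds because a very general such period has Picard lattice exactly $K_d$, so any stabilizing isometry must preserve $K_d^\perp=\Gamma_d$ and hence (acting trivially on the discriminant) lies in $\tilde\OO(\Gamma,K_d)$. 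By the universal property of normalization this produces a factorization through the normalization, and finiteness plus birationality plus normality of the source forces the map to be the normalization itself.

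Third, I would pin down the degree of $\cctwotilde_d\to\,\tilde{\!\kc}_d$. This is exactly the index $[\tilde\OO(\Gamma,K_d):\tilde\OO(\Gamma,v_d)]$ computed in Lemma \ref{lem:KDOO}: the index is $2$ when $d\equiv0\,(6)$ and $1$ when $d\equiv2\,(6)$. One must be slightly careful that the index of subgroups equals the degree of the map of quotients; this is true here because $-\id$ acts trivially on $D_d$ and lies in both groups, so there is no further collapsing, and the generic stabilizer is trivial by the very-general-period argument above. Hence the degree is $2$ in the first case and the map is an isomorphism (finite, birational, between normal varieties) in the second.

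\textbf{Main obstacle.} The only genuinely delicate point is the very-general-period transversality argument underpinning both the "normalization" claim and the degree computation: one needs that a very general $x\in D_d$ has Picard lattice (inside $\Gamma$) equal to precisely $K_d$, no larger, so that no unexpected isometries stabilize the period. This is a standard Noether--Lefschetz-type density statement — the locus in $D_d$ where the Picard rank jumps is a countable union of proper analytic subsets — but it is the step that requires actual care rather than formal manipulation; everything else is bookkeeping with Baily--Borel, Borel's regularity theorem, and Lemma \ref{lem:KDOO}.
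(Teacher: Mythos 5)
Your proposal follows essentially the same route as the paper: Lemma \ref{lem:KDOO} for the degree, the very-general-period argument for generic injectivity of $\,\tilde{\!\kc}_d\to\kc$, Baily--Borel and Borel's theorem for algebraicity, and ``finite $+$ birational $+$ normal source $=$ normalization''. Two corrections are in order. First, your claim that $-\id$ ``lies in both groups'' is false: $-\id$ fixes neither $h^2$ nor $v_d$, so it belongs to neither $\tilde\OO(\Gamma,K_d)$ nor $\tilde\OO(\Gamma,v_d)$. What you actually need for the degree-two statement when $d\equiv0\,(6)$ is that the nontrivial coset of $\tilde\OO(\Gamma,v_d)$ in $\tilde\OO(\Gamma,K_d)$ acts nontrivially on $\cctwotilde_d$; the paper settles this by exhibiting the explicit involution from Lemma \ref{lem:KDOO} ($g=\id$ on $E\oplus U_2\oplus {\rm I}_{0,3}$ and $g=-\id$ on $U_1$) and observing that it acts nontrivially, which is cleaner than your appeal to triviality of the generic stabilizer. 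Second, ``discreteness plus properness of the action'' only yields finite fibres, i.e.\ quasi-finiteness of $\,\tilde{\!\kc}_d\to\kc$; upgrading quasi-finite to finite (which you need before the universal property of normalization applies) is a genuine additional step, which the paper explicitly flags and defers to \cite{Brakkee,HassComp}. Neither point derails the argument, but both should be repaired.
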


\begin{proof}
Clearly, if $d$ satisfies $(\ast)_2$, then $\tilde\OO(\Gamma,K_d)=\tilde\OO(\Gamma,v_d)$  by Lemma \ref{lem:KDOO}
and, therefore, $\cctwotilde_d\cong\,\tilde{\!\kc}_d$. Otherwise, $\cctwotilde_d\twoheadrightarrow \,\tilde{\!\kc}_d $ is the quotient by
the involution $g\in \tilde\OO(\Gamma)$ defined by $g={\rm id}$ on $E\oplus U_2\oplus {\rm I}_{0,3}$ and $g=-{\rm id}$
on $U_1$, which indeed acts non-trivially on $\cctwotilde_d$. 

To prove that $\,\tilde{\!\kc}_d\twoheadrightarrow\kc_d$
is quasi-finite, use that $\,\tilde{\!\kc}_d\to\kc$ is algebraic with discrete and hence finite fibres.
For a very general $x\in D_d$ such that there does not exist any proper primitive sublattice $N\subset \Gamma_d$
with $x\in N\otimes\CC$, any $g\in \tilde\OO(\Gamma)$ with $g(x)=x$ also satisfies $g(\Gamma_d)=\Gamma_d$ and, therefore,
$g(K_d)=K_d$, i.e.\ $g\in \tilde\OO(\Gamma,K_d)$. This proves that $\,\tilde{\!\kc}_d\to\kc$ is generically injective.
Thus, once $\,\tilde{\!\kc}_d\to\kc$ is shown to be finite, and not only quasi-finite, it is the normalization of its image $\kc_d$.
We refer to \cite{Brakkee,HassComp} for more details on this point.
\end{proof}

\begin{remark}
Note that while the fibre of $\cctwotilde_d\to\tilde{\!\kc}_d$ consists of 
at most two points, the fibres of $\,\tilde{\!\kc}_d\to\kc_d$ may contain more points, depending on
the singularity type of the points in $\kc_d$. For fixed $d$, the cardinality of the fibres is bounded. However,
it is unbounded when $d$ is allowed to grow.
\end{remark}

Lemma \ref{lem:OOLamL} immediately yields the following result which eventually leads to the mysterious relation between
K3 surfaces and cubic fourfolds.

\begin{cor}\label{cor:piceps}
Assume $d$ satisfies $(\ast\ast)$. We choose an isomorphism $\varepsilon\colon\Gamma_d\congpf\Lambda_d$.
\begin{enumerate}
\item[{\rm (i)}] If $d$ satisfies $(\ast)_0$, then $\varepsilon$ naturally induces an isomorphism $\km_d\cong\cctwotilde_d$.
Therefore, $\km_d$ comes with a finite morphism
onto $\kc_d$ generically of degree two:
$$\Phi_\varepsilon\colon\xymatrix{\km_d\cong\cctwotilde_d\ar[r]^-{2:1}&\,\tilde{\!\kc}_d\ar[r]^-{{\rm norm}}&\kc_d\subset\kc.}$$
\item[{\rm(ii)}]
If  $d$ satisfies $(\ast)_2$, then $\varepsilon$ naturally induces  an isomorphism
$\km_d\cong\cctwotilde_d\cong\,\tilde{\!\kc}_d$. Therefore, $\km_d$ can be seen as the normalization of $\kc_d\subset\kc$:
$$\xymatrix{\Phi_\varepsilon\colon\km_d\cong\cctwotilde_d\cong\,\tilde{\!\kc}_d\ar[r]^-{{\rm norm}}&\kc_d\subset\kc.}$$\vskip-0.7cm\qed
\end{enumerate}
\end{cor}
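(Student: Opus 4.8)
The strategy is to establish the single new ingredient — that $\varepsilon$ induces an isomorphism $\km_d\cong\cctwotilde_d$ — and then read off both parts by feeding this into Lemma \ref{lem:KDOO} and the Corollary of Hassett above on the tower $\cctwotilde_d\twoheadrightarrow\,\tilde{\!\kc}_d\twoheadrightarrow\kc_d$. So I would first observe that, since $d$ satisfies $(\ast\ast)$, Lemma \ref{lem:OOLamL} supplies not only the isometry $\varepsilon\colon\Gamma_d\congpf\Lambda_d$ but also an isomorphism of groups $\tilde\OO(\Gamma,v_d)\congpf\tilde\OO(\Lambda_d)$; crucially, the proof of that lemma identifies both groups with the kernel of the discriminant representation of $\OO(\Gamma_d)\congpf\OO(\Lambda_d)$, the isomorphism between them being induced by conjugation with $\varepsilon$. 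Extending scalars, $\varepsilon$ induces a linear isomorphism $\PP(\Gamma_d\otimes\CC)\congpf\PP(\Lambda_d\otimes\CC)$ carrying the quadric $\{x\mid (x)^2=0,\,(x.\bar x)>0\}$ onto its counterpart, hence a biholomorphism $D_d\congpf Q_d$ of period domains which intertwines the two (now identified) group actions. Passing to quotients yields $\km_d=\tilde\OO(\Lambda_d)\setminus Q_d\congpf\tilde\OO(\Gamma,v_d)\setminus D_d=\cctwotilde_d$, which by Theorems \ref{thm:BB} and \ref{thm:BorelJDG} (and Remark \ref{rem:BBnontf}) is an isomorphism of quasi-projective varieties.

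With this in place I would distinguish the two cases via Lemma \ref{lem:KDOO}. If $d$ satisfies $(\ast)_0$, then $\tilde\OO(\Gamma,v_d)\subset\tilde\OO(\Gamma,K_d)$ has index two, so $\cctwotilde_d\twoheadrightarrow\,\tilde{\!\kc}_d$ is finite of degree two, and $\,\tilde{\!\kc}_d\twoheadrightarrow\kc_d$ is the normalization of $\kc_d\subset\kc$ by the Corollary of Hassett above; composing with the isomorphism of the first paragraph gives the asserted finite, generically two-to-one morphism $\Phi_\varepsilon\colon\km_d\cong\cctwotilde_d\twoheadrightarrow\,\tilde{\!\kc}_d\twoheadrightarrow\kc_d$, algebraic again by Theorems \ref{thm:BB} and \ref{thm:BorelJDG}. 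If instead $d$ satisfies $(\ast)_2$, then $\tilde\OO(\Gamma,v_d)=\tilde\OO(\Gamma,K_d)$, whence $\cctwotilde_d\cong\,\tilde{\!\kc}_d$, and the composite $\km_d\cong\cctwotilde_d\cong\,\tilde{\!\kc}_d\twoheadrightarrow\kc_d$ exhibits $\km_d$ as the normalization of $\kc_d$. All quantitative assertions — finiteness, surjectivity, the degree, the normalization property — are then simply quoted from the preceding Corollary.

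There is no serious obstacle here; the corollary is essentially a repackaging of Lemma \ref{lem:OOLamL}, Lemma \ref{lem:KDOO} and the Corollary of Hassett above. The one point that genuinely deserves care is the descent in the first paragraph: one must know that conjugation by $\varepsilon$ actually carries $\tilde\OO(\Gamma,v_d)$ onto $\tilde\OO(\Lambda_d)$, and not merely that $\varepsilon$ matches the two period domains. This is precisely what hypothesis $(\ast\ast)$ buys through Lemma \ref{lem:OOLamL}, whose proof rests on the uniqueness up to $\OO(\widetilde\Lambda)$ of the primitive embeddings $U\,\hookrightarrow L_d$ and $\Lambda_d\,\hookrightarrow\widetilde\Lambda$; without $(\ast\ast)$ there is no lattice isomorphism $\Gamma_d\cong\Lambda_d$ at all. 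Any residual sign or orientation ambiguity in the identification $D_d\cong Q_d$ is harmless, since $D_N$ is built intrinsically from the lattice and $-\id$ acts trivially on it.
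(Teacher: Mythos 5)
Your proposal is correct and follows the same route as the paper, which presents the corollary as an immediate consequence of Lemma \ref{lem:OOLamL} (giving $\Gamma_d\cong\Lambda_d$ and $\tilde\OO(\Gamma,v_d)\cong\tilde\OO(\Lambda_d)$, hence $\km_d\cong\cctwotilde_d$ after passing to equivariant quotients of the period domains), combined with the case distinction of Lemma \ref{lem:KDOO} and the finiteness/normalization statements of the preceding corollary. Your additional care about the descent being implemented by conjugation with $\varepsilon$, identified via the common description as the kernel of the discriminant representation, is exactly the content of the second half of the proof of Lemma \ref{lem:OOLamL}.
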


\begin{remark}\label{rem:finitequotMd}
As indicated by the notation, the morphism $\Phi_\varepsilon\colon\km_d\twoheadrightarrow \kc_d\subset\kc$,
which will be seen to link polarized K3 surfaces $(S,L)$ of degree $d$ with special cubic fourfolds $X$,
depends on the choice of $\varepsilon\colon\Gamma_d\congpf\Lambda_d$. There is no distinguished choice 
for $\varepsilon$ and, therefore, one should not expect to find a distinguished morphism $\km_d\to\kc_d$ 
that can be described by a geometric procedure associating a cubic fourfold $X$ to a polarized K3 surface $(S,L)$.\footnote{I wish to thank E.\ Brakkee and P.\ Magni for discussions concerning this point.}

To avoid any dependance on $\varepsilon$, one could think of defining a morphism from the finite quotient 
$$\pi_d\colon\km_d=  \tilde\OO(\Lambda_d)\setminus Q_d\twoheadrightarrow \bar\km_d\coloneqq\OO(\Lambda_d)\setminus Q_d$$ to some meaningful quotient of $\kc$. But, as the degree of $\pi_d$ grows with $d$,  there definitely is no reasonable
quotient of $\kc$ that would receive all of them. However, it seems plausible that a quotient  $\kc_d\to\bar{\,\kc_d}$ can be constructed
that allows for a morphism $\bar\km_d\twoheadrightarrow\bar{\,\kc_d}$. The derived point of view to be explained later will shed
more light on this.
\end{remark}

\subsection{} 

We start by recalling the central theorem in the theory of K3 surfaces: the global Torelli theorem. In the situation at hand, it
is due to Pjatecki{\u\i}-{\v{S}}apiro and {\v{S}}afarevi{\v{c}}, see \cite{HuyK3} for details, generalizations, and references.

Consider the coarse moduli space $M_d$ of polarized K3 surfaces $(S,L)$ with $(L)^2=d$, which can be constructed
as a quasi-projective variety either by (not quite) standard GIT methods, by using the theorem below, or as a Deligne--Mumford
stack. 

The period map associates with any $[(S,L)]\in M_d$ a point in $\km_d$. For this, choose an isometry
$H^2(S,\ZZ)\cong \Lambda$, called a marking, that maps ${\rm c}_1(L)$ to $\ell=e_2+(d/2)f_2$ and, therefore, induces
an isometry $H^2(S,\ZZ)_{L\text{\rm -pr}}\cong\Lambda_d$. Then the $(2,0)$-part $H^{2,0}(S)\subset H^2(S,\CC)\cong\Lambda\otimes\CC$ defines
a point  in the period domain $Q_d$. The image point in the quotient $\tilde\OO(\Lambda_d)\setminus Q_d$ is then independent
of the choice of any marking. This defines the period map $\kp\colon M_d\to \km_d$ which Hodge theory reveals to be holomorphic.
Note that both spaces, $M_d$ and $\km_d$, are quasi-projective varieties with quotient singularities.

\begin{thm}[Pjatecki{\u\i}-{\v{S}}apiro and {\v{S}}afarevi{\v{c}}]\label{thm:GTK3}
The period map is an algebraic, open embedding
\begin{equation}\label{thm:GTK3}
\kp\colon M_d\,\hookrightarrow \km_d =Ê\tilde\OO(\Lambda_d)\setminus Q_d.
\end{equation}
\end{thm}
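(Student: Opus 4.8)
The plan is to prove the statement in three stages: holomorphicity, injectivity, and openness.

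\smallskip

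\emph{Injectivity.} First I would use the polarized global Torelli theorem for K3 surfaces, recalled just before Definition~\ref{def:assoK3C}: two polarized K3 surfaces $(S,L)$ and $(S',L')$ with $(L)^2=(L')^2=d$ are isomorphic if and only if there is a Hodge isometry $H^2(S,\ZZ)\cong H^2(S',\ZZ)$ carrying $L$ to $L'$. Suppose $[(S,L)]$ and $[(S',L')]$ have the same period point in $\km_d=\tilde\OO(\Lambda_d)\setminus Q_d$. Choosing markings $H^2(S,\ZZ)\cong\Lambda\cong H^2(S',\ZZ)$ sending ${\rm c}_1(L)$ and ${\rm c}_1(L')$ to $\ell=e_2+(d/2)f_2$, the equality of period points means the two induced periods in $Q_d$ differ by an element $g\in\tilde\OO(\Lambda_d)$. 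Since $g$ acts trivially on the discriminant group $A_{\Lambda_d}$, it extends (using the unimodularity of $\Lambda$ and the fact that $\ell^\perp=\Lambda_d$ determines $\ell$ up to sign, cf.\ \cite[Cor.\ 14.1.10]{HuyK3}) to an isometry of $\Lambda$ fixing $\ell$; composing the markings with this extension produces a Hodge isometry $H^2(S,\ZZ)\cong H^2(S',\ZZ)$ taking $L$ to $L'$, whence $(S,L)\cong(S',L')$. This gives injectivity of $\kp$ on points; the same argument applied with $S=S'$ shows the period map has no infinitesimal ambiguity either, so $\kp$ is genuinely injective as a map of varieties.

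\smallskip

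\emph{Algebraicity and the local structure.} The period map $\kp\colon M_d\to\km_d$ is holomorphic by the classical theory of variations of Hodge structure (Griffiths). That it is algebraic then follows from Borel's extension theorem, Theorem~\ref{thm:BorelJDG}, together with Remark~\ref{rem:BBnontf}: pass to a torsion-free finite-index normal subgroup $G_0\subset\tilde\OO(\Lambda_d)$ and a corresponding finite cover of $M_d$ to see the lift is regular, hence $\kp$ itself is regular. Next I would invoke the local Torelli theorem for K3 surfaces: the period map is a local isomorphism of analytic germs, since the differential $H^1(S,T_S)\to\Hom(H^{2,0}(S),H^{1,1}(S))$ is an isomorphism and, in the polarized case, cuts down correctly to the tangent space of $Q_d$. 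Combined with injectivity, this shows $\kp$ is an injective local analytic isomorphism, hence an open immersion of analytic spaces; being also a morphism of varieties, it is an algebraic open embedding onto its image.

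\smallskip

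\emph{Main obstacle.} The genuinely hard input is the \emph{global} Torelli theorem itself --- the statement that a Hodge isometry of $H^2$ compatible with the polarization is induced by an isomorphism --- which is the theorem of Pjatecki\u\i-\v{S}apiro and \v{S}afarevi\v{c}. I would not reprove it; I would cite \cite{HuyK3} for the full argument, which ultimately rests on surjectivity of the period map, the Torelli theorem for Kummer surfaces, and a density/deformation argument. Once that is granted, the only subtlety left is bookkeeping with the arithmetic groups: one must check that $g\in\tilde\OO(\Lambda_d)$ really extends to an isometry of the full lattice $\Lambda$ fixing $\ell$, which is where the condition $\bar g\equiv\id$ on $A_{\Lambda_d}$ and the uniqueness of the embedding $\Lambda_d\hookrightarrow\Lambda$ (equivalently, of $\ell$ in its $\OO(\Lambda)$-orbit) are used. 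A secondary point worth a remark is that both $M_d$ and $\km_d$ carry only quotient singularities, so ``open embedding'' is meant in the sense of varieties with finite quotient singularities; the image is an open subvariety but in general a proper one, since $\km_d$ also parametrizes periods of non-algebraic or ``boundary'' type.
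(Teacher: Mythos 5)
The paper does not prove this theorem at all: it is recalled as a classical result of Pjatecki\u\i-\v{S}apiro and \v{S}afarevi\v{c} with a pointer to \cite{HuyK3}, so there is no argument in the text to compare yours against. Your outline is the standard and correct reduction: injectivity from the polarized global Torelli theorem (with the lattice-theoretic step that $g\in\tilde\OO(\Lambda_d)$, acting trivially on $A_{\Lambda_d}$, glues with the identity on $\ZZ\,\ell$ to an isometry of $\Lambda$ fixing $\ell$ --- this is exactly the point where $\tilde\OO$ rather than $\OO$ matters), openness from local Torelli, and algebraicity from Borel's Theorem~\ref{thm:BorelJDG} via Remark~\ref{rem:BBnontf}. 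Deferring the global Torelli theorem itself to the literature is entirely consistent with what the paper does, and you correctly identify it as the only genuinely hard input.

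Two small corrections. First, the sentence that applying the injectivity argument with $S=S'$ shows the period map ``has no infinitesimal ambiguity'' is not meaningful: injectivity is a statement about points, and the infinitesimal/local structure is exactly what local Torelli supplies in your next paragraph, so the remark is at best redundant. Second, the complement $\km_d\setminus M_d$ does not parametrize periods of ``non-algebraic'' type: every point of $Q_d$ is the period of a K3 surface carrying a big and nef line bundle of square $d$ (hence projective), and the complement consists precisely of those periods orthogonal to some $(-2)$-class in $\Lambda_d$, where the quasi-polarization fails to be ample. This is spelled out in the paper right after the theorem and made quantitative in Proposition~\ref{prop:imageperiodK3}, which counts the irreducible components of the complement.
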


\begin{remark}
Coming back to Remark \ref{rem:finitequotMd},
one might wonder how the image of $M_d$ under the finite quotient $\pi_d\colon\km_d\to\bar\km_d$, 
can be interpreted geometrically in terms of the polarized K3 surfaces $(S,L)$ parametrized by $M_d$. There is no completely satisfactory
answer to this, i.e.\ the image $\pi_d(M_d)$ is not known (and should probably not expected)
to be the coarse moduli space of a nice geometric moduli functor. The best one
can say is that for  $(S,L)\in M_d$ with $\rho(S)=1$, the fibre $\pi_d^{-1}(\pi_d(S,L))$ can be viewed as the set
of all Fourier--Mukai partners of $S$, which come with a unique polarization, cf.\ \cite{HulPlo,HuyFinite}.
\end{remark}

To understand the complement of the open embedding (\ref{thm:GTK3}), note first that
any $x\in Q_d$ is the period of some K3 surface $S$. This surface then comes with a natural line bundle $L$
(up to the action of the Weyl group) corresponding to $\ell=e_2+(d/2)f_2\in\Lambda$. Furthermore, $L$ is ample
(again, possibly after applying the Weyl group action)
if and only if there exists no $\delta\in\Lambda_d$ with $(\delta)^2=-2$ orthogonal to $x$, i.e.\
$x\in Q_d\setminus\bigcup \delta^\perp$ with $\delta\in\Delta_d\coloneqq\Delta(\Lambda_d)$, the set of all $(-2)$-classes in $\Lambda_d$.
Hence, the complement of $M_d\subset\km_d$ can be described as the quotient
\begin{equation}\label{eqn:compl}
\xymatrix{\tilde\OO(\Lambda_d)\setminus \bigcup\delta^\perp\subset\km_d.}
\end{equation}
Note that $\tilde\OO(\Lambda_d)$ acts on $\Delta_d$ and that  the quotient (\ref{eqn:compl}) really is a finite union.
In fact, it consists of at most two components due to the following result.\footnote{Thanks to O.\ Debarre for pointing this out to me.}

\begin{prop}\label{prop:imageperiodK3} The complement $\km_d\setminus M_d$ consists of either one or two irreducible
Noether--Lefschetz divisors depending on $d$:
\begin{enumerate}
\item[{\rm (i)}] If $d/2\not\equiv 1\, (4)$, then the complement (\ref{eqn:compl}) of $M_d\subset \km_d$ is irreducible.
\item[{\rm (ii)}] If $d/2\equiv 1\, (4)$, then the complement (\ref{eqn:compl}) of $M_d\subset \km_d$ has  of two irreducible components.
\end{enumerate}
\end{prop}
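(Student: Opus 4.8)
The plan is to describe $\km_d\setminus M_d$ as a quotient of the $(-2)$-hyperplanes in $Q_d$ and count the $\tilde\OO(\Lambda_d)$-orbits of $(-2)$-classes in $\Lambda_d$. By (\ref{eqn:compl}), the complement is the image of $\bigcup_{\delta\in\Delta_d}\delta^\perp$, and each $\delta^\perp\cap Q_d$ is an irreducible divisor in $Q_d$ (it is itself a period domain of signature $(2,18)$); its image in $\km_d$ is irreducible, and two such images coincide exactly when the corresponding $\delta$'s lie in the same $\tilde\OO(\Lambda_d)$-orbit. So the statement reduces to: the set $\Delta_d$ of $(-2)$-vectors in $\Lambda_d$ consists of one $\tilde\OO(\Lambda_d)$-orbit if $d/2\not\equiv 1\,(4)$ and of two orbits if $d/2\equiv 1\,(4)$.

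For the orbit count I would again invoke Eichler's criterion, exactly as in the proof of Proposition \ref{prop:HassettEichler}: since $\Lambda_d\cong E\oplus U^{\oplus 2}\oplus\ZZ(-d)$ contains $U^{\oplus 2}$, a primitive vector $\delta$ with fixed $(\delta)^2=-2$ is determined up to $\tilde\OO(\Lambda_d)$ by the element $(1/n)\bar\delta\in A_{\Lambda_d}\cong\ZZ/d\ZZ$, where $n$ is the divisibility of $\delta$ (i.e.\ $(\delta.\Lambda_d)=n\ZZ$). Write $\delta=a\,w+b\,g$ with $w\in E\oplus U^{\oplus 2}$ primitive and $g$ a generator of $\ZZ(-d)$; then $(\delta)^2=(w)^2-db^2=-2$. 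The divisibility $n$ divides $d$ and divides $b$, and the only constraint is that $n\mid d$ and that $-2\equiv (w)^2\equiv 0\,(n)$ up to the $\ZZ(-d)$ contribution — concretely $n\in\{1,2\}$, with $n=2$ possible only when $4\mid d$ and $d/2$ is odd, i.e.\ $d/2\equiv 1\,(4)$ (one needs $\delta=2\delta'$ for some $\delta'$ with $(\delta')^2=-1/2\cdot 2$, forcing $(w)^2-db^2=-2$ with $w=2w'$, hence $4(w')^2-db^2=-2$, solvable with $b$ odd precisely when $d/2\equiv 1\,(4)$). For each admissible divisibility the value $(1/n)\bar\delta\in\ZZ/d\ZZ$ turns out to be determined up to sign (and sign is realized inside $\tilde\OO(\Lambda_d)$, e.g.\ by an element acting as $-\id$ on a copy of $U$ containing part of $\delta$), so admissible divisibilities are in bijection with orbits: one orbit when only $n=1$ occurs, two when both $n=1$ and $n=2$ occur.

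Finally I would need to check that distinct orbits really give distinct divisors in $\km_d$ and that there are no further orbits collapsing — but this is automatic, since the divisibility $n$ is an $\OO(\Lambda_d)$-invariant of $\delta$, hence the two orbits (when present) have genuinely different generic behaviour and their images in $\km_d$ are distinct irreducible divisors; conversely the Eichler argument shows these are \emph{all} the orbits, so the complement has exactly the asserted number of components.

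\textbf{The main obstacle} is the careful bookkeeping in the divisibility computation: one must verify precisely for which $d$ a primitive $(-2)$-class of divisibility $2$ exists in $\Lambda_d\cong E\oplus U^{\oplus 2}\oplus\ZZ(-d)$, and confirm that the residue $(1/n)\bar\delta$ is pinned down up to a sign realized by $\tilde\OO(\Lambda_d)$ — it is exactly the interplay between the $4\mid d$ condition and the parity of $d/2$ that produces the dichotomy $d/2\equiv 1\,(4)$ versus $d/2\not\equiv 1\,(4)$, and getting this congruence analysis right (rather than, say, the apparently natural $4\mid d$) is where care is needed. Standard references for the orbit classification of $(-2)$-vectors in lattices of this shape are \cite[Sec.\ 14]{HuyK3}.
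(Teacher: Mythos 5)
Your proposal takes essentially the same route as the paper: reduce to counting $\tilde\OO(\Lambda_d)$-orbits of $(-2)$-classes in $\Lambda_d$ via Eichler's criterion, note that the divisibility is $n=1$ or $2$, and show that the $n=2$ orbit exists exactly when $d/2\equiv1\,(4)$. Two local slips should be repaired, though neither affects the outcome. First, ``$4\mid d$ and $d/2$ is odd'' is self-contradictory; the condition your own computation produces is $d\equiv2\,(8)$, i.e.\ $d/2\equiv1\,(4)$. Second, divisibility $2$ does \emph{not} mean $\delta=2\delta'$ in $\Lambda_d$ (that would contradict primitivity); it means $(1/2)\delta\in\Lambda_d^*$, which in the splitting $\Lambda_d\cong (E\oplus U^{\oplus2})\oplus\ZZ(-d)$ amounts to the component in the unimodular summand being twice a lattice vector while the $\ZZ(-d)$-coefficient $b$ stays odd; with that reading your congruence $4(w')^2-db^2=-2$ with $b$ odd forces $d\equiv2\,(8)$, and conversely $b=1$, $(w')^2=(d-2)/4$ realizes such a class, so the existence direction is also covered. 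For comparison, the paper gets the same congruence by decomposing $\delta$ inside the full K3 lattice relative to the copy of $U_2$ containing $\ell$ and using unimodularity of $U_2^{\perp}$ to conclude $(\delta')^2\equiv0\,(8)$ — the two computations are equivalent, and in both cases the $n=2$ residue is $d/2\in\ZZ/d\ZZ$, so no sign ambiguity arises and the orbit (hence component) count is exactly one or two as claimed.
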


\begin{proof}
This is again an application of Eichler's criterion, see the proof of Proposition \ref{prop:HassettEichler}. For $\delta\in\Lambda_d$ with
$(\delta)^2=-2$, one has $(\delta.\Lambda_d)=n\,\ZZ$ with $n=1$ or $n=2$. In the first case, the residue class $(1/n)\,\bar\delta\in A_{\Lambda_d}\cong\ZZ/d\ZZ$ is trivial. In the second case,   $(1/2)\,\bar\delta\equiv0$ or $\equiv d/2\,(d)$ in $\ZZ/d\ZZ$.
However, the second case is only possible if $d/2\equiv1\, (4)$. Indeed, write $\delta=\delta'+\delta''\in U_2^\perp\oplus U_2$ with
$\delta''\in\ell^\perp\cap U_2=\ZZ\, (e_2-(d/2)f_2)$. Then $(1/2)\,\delta'+(1/2)\,\delta''+(m/2)\,\ell \in \Lambda$ for some $m\in \ZZ$.
Hence, $(1/2)\,\delta'\in\Lambda$ and, therefore, $-2=(\delta)^2\equiv (\delta'')^2\,(8)$. Combine this with
$(1/2)\,\delta''+(m/2)\,\ell\in U_2$, which implies $(\delta'')^2\equiv -m^2d\,(8)$.
\end{proof}
To be more explicit, one can write
$$M_d=\begin{cases}\km_d\setminus \delta_0^\perp&\text{if }Ê\frac{d}{2}\not\equiv 1\, (4)\\
\km_d\setminus(\delta_0^\perp\cup \delta_1^\perp)&\text{if }Ê\frac{d}{2}\equiv 1\, (4),\end{cases}$$
where $\delta_0,\delta_1$ are chosen explicitly as $\delta_0=e_1-f_1$ and $\delta_1=2e_1+\frac{d/2-1}{2}f_1+e_2-(d/2)\, f_2$.

\subsection{} 
We now switch to the cubic side. The moduli space $M$ of smooth cubic fourfolds can be constructed by means of standard
GIT methods as the quotient
$$M=|\ko_{\PP^5}(3)|_{\rm sm}/\!/{\rm PGl}(6).$$ As in the case of K3 surfaces, mapping a smooth cubic fourfold
$X$ to its period $H^{3,1}(X)\subset H^4(X,\CC)_{\rm pr}\cong \Gamma\otimes\CC$, which is a point
in the period domain $D\subset\PP(\Gamma\otimes\CC)$, defines a holomorphic map $\kp\colon M\to \kc$.
In analogy to the situation for K3 surfaces, the following global Torelli theorem has been proven \cite{VoisinGT,VoisinGTErr,LooPeriod,CharlesGT,HuyRen}.

\begin{thm}[Voisin, Looijenga,...,Charles, Huybrechts--Rennemo,...]\label{thm:GT3}
The period map is an algebraic, open embedding
$$\kp\colon M\,\hookrightarrow \kc=\OO(\Gamma)\setminus D.$$
\end{thm}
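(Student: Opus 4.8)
The statement to be proven is the global Torelli theorem for smooth cubic fourfolds: the period map $\kp\colon M\to \kc=\OO(\Gamma)\setminus D$ is an algebraic open embedding. Since the excerpt explicitly cites \cite{VoisinGT,VoisinGTErr,LooPeriod,CharlesGT,HuyRen}, I would present this as a survey of the architecture of the proof rather than reprove it from scratch; the plan below sketches how the ingredients fit together.

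First, algebraicity of $\kp$ is formal once holomorphy is known: $M$ is a quasi-projective variety (a GIT quotient $|\ko_{\PP^5}(3)|_{\rm sm}/\!/{\rm PGl}(6)$), the target $\kc$ is quasi-projective with finite quotient singularities by the Baily--Borel theorem (Theorem \ref{thm:BB}) and its corollary, and holomorphic maps to such quotients are regular by Borel's GAGA-type result (Theorem \ref{thm:BorelJDG}, together with Remark \ref{rem:BBnontf} to handle the torsion in $\OO(\Gamma)$). Holomorphy of the period map itself is the classical consequence of Griffiths transversality and the fact that $H^{3,1}(X)$ varies holomorphically with $X$; for a cubic fourfold transversality is vacuous since the period lies in degree four with a one-dimensional $(3,1)$-part, so $\kp$ lands in the open subset $D\subset\PP(\Gamma\otimes\CC)$ cut out by $(x)^2=0,(x.\bar x)>0$. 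So the real content is that $\kp$ is \emph{injective} and an \emph{immersion} with open image.

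The immersion (local Torelli) statement is the infinitesimal computation of Griffiths: the differential of $\kp$ at $[X]$ is identified, via the Kodaira--Spencer map $H^1(X,T_X)\to \Hom(H^{3,1}(X),H^{2,2}(X))$, with a multiplication map in the Jacobian ring of the defining cubic, and one checks this is injective (indeed it is the degree-$3$ piece of the Jacobian ring pairing with a generator). The injectivity of $\kp$ on points is the deep part and is exactly where the cited works enter: Voisin's original proof \cite{VoisinGT} (with the correction \cite{VoisinGTErr}) proceeds by a careful analysis of the variation of Hodge structure and the geometry of lines/surfaces on $X$, reconstructing $X$ from the Hodge structure $H^4(X,\ZZ)_{\rm pr}$; Looijenga \cite{LooPeriod} and later Charles \cite{CharlesGT} gave alternative arguments, and Huybrechts--Rennemo \cite{HuyRen} a further one. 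I would cite these for injectivity and not attempt to reproduce it. Finally, a priori $\kp$ is injective and an open immersion, hence an isomorphism onto an open subvariety of $\kc$; that this open subvariety is precisely the complement of the discriminant/Noether--Lefschetz loci corresponding to nodal cubics (the divisor $\kc_6$) and to $\kc_2$ is the analogue of the K3 statement, obtained by identifying which periods admit a $(-2)$- or relevant vanishing class, but the bare statement of Theorem \ref{thm:GT3} only asserts open embedding, so this last point is optional.

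The main obstacle is without question the \emph{injectivity of the period map on points}: local Torelli and algebraicity are comparatively soft (Jacobian ring computation plus Baily--Borel/Borel), whereas global injectivity required genuinely new geometric input in each of the cited papers — reconstructing the cubic fourfold, or at least its isomorphism class, from its polarized Hodge structure, which for cubic fourfolds has no classical precedent and relies either on the geometry of the Fano variety of lines $F(X)$ (an irreducible holomorphic symplectic fourfold, whose own Torelli theorem via Verbitsky feeds back into the cubic) or on a monodromy/specialization argument. I would therefore structure the writeup as: (1) holomorphy and algebraicity of $\kp$ [cite Theorems \ref{thm:BB}, \ref{thm:BorelJDG}]; (2) local Torelli via the Jacobian ring; (3) quote global injectivity from \cite{VoisinGT,VoisinGTErr,LooPeriod,CharlesGT,HuyRen}; (4) conclude $\kp$ is an open embedding, since an injective immersion between varieties of the same dimension onto a normal target is an open immersion.
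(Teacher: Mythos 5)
Your proposal is consistent with the paper, which states this theorem without proof and simply cites \cite{VoisinGT,VoisinGTErr,LooPeriod,CharlesGT,HuyRen} for it; your decision to quote global injectivity from those references while supplying only the soft ingredients (algebraicity via Theorems \ref{thm:BB} and \ref{thm:BorelJDG}, local Torelli via the Jacobian ring isomorphism $R^3\otimes R^0\to R^3$, and the dimension count $\dim M=\dim D=20$ for openness) is exactly the right level of detail here and the sketch is correct.
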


This central result is complemented by a result of Laza and Looijenga,  which can be seen as an analogue of Proposition \ref{prop:imageperiodK3}, see \cite{LazaCubics,LooPeriod}. First note that for $d=2$ and $d=6$ the lattice $K_d$ is given by the matrices $\left(\begin{matrix}-3&1\\1&-1\end{matrix}\right)$
and $\left(\begin{matrix}-3&0\\0&-2\end{matrix}\right)$, respectively, see Remark \ref{rem:epxlitchoices}. 
Hence, if a smooth cubic fourfold $X$ defined a point in $\kc_6$, then $H^{2,2}(X,\ZZ)_{\rm pr}$ would contain
a class $\delta$ with $(\delta)^2=2$ contradicting \cite[\S 4, Prop.\ 1]{VoisinGT}. In \cite{HassComp} one finds
an argument using limiting mixed Hodge structures to also exclude the case $[X]\in \kc_2$. So, $M\subset \kc\setminus(\kc_2\cup\kc_6)$.

\begin{thm}[Laza, Looijenga]
The period map identifies the moduli space $M$ of smooth cubic fourfolds with the
complement of $\kc_2\cup\kc_6$:
$$\kp\colon M\congpf\kc\setminus(\kc_2\cup\kc_6).$$
\end{thm}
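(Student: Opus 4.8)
The plan, following Laza and Looijenga, is to combine the global Torelli theorem (Theorem~\ref{thm:GT3}) with an analysis of the period map on a GIT compactification of $M$; I only sketch it here. By Theorem~\ref{thm:GT3} the period map $\kp\colon M\hookrightarrow\kc$ is an algebraic open embedding, and we have already recalled the inclusion $M\subset\kc\setminus(\kc_2\cup\kc_6)$: no \emph{smooth} cubic fourfold has period in $\kc_6$, by Voisin's analysis of $H^{2,2}(X,\ZZ)_{\rm pr}$ (\cite[\S4, Prop.~1]{VoisinGT}), nor in $\kc_2$, by Hassett's limiting mixed Hodge structure argument. So what is left is the reverse inclusion $\kc\setminus(\kc_2\cup\kc_6)\subseteq\kp(M)$; since $\kp$ is an open immersion, this set-theoretic equality upgrades at once to the asserted isomorphism of varieties, and we may concentrate on it.

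To prove the reverse inclusion I would extend the period map across the GIT boundary. Let $\overline M\coloneqq|\ko_{\PP^5}(3)|^{\rm ss}/\!/{\rm PGl}(6)\supset M$ be the GIT quotient, a projective variety. The decisive input is Laza's classification of GIT (semi)stability for cubic fourfolds: a cubic fourfold with at worst simple (ADE) singularities is stable, while the strictly semistable locus is small and explicit, its only closed orbit of interest being the chordal cubic ${\rm Sec}(v_2(\PP^2))$, singular along a Veronese surface. One then checks that for a one-parameter degeneration of smooth cubics to a cubic with ADE singularities the limiting mixed Hodge structure on $H^4_{\rm lim}$ is pure of K3 type and varies holomorphically; combined with Borel's GAGA theorem (Theorem~\ref{thm:BorelJDG} and Remark~\ref{rem:BBnontf}) on the open part, this extends $\kp$ to a morphism $\bar\kp\colon\overline M\to\kc^{\ast}$ into the Baily--Borel compactification of $\kc$. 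As $\overline M$ is projective and $\bar\kp$ restricts on $M$ to $\kp$, whose image is dense in $\kc^{\ast}$, the morphism $\bar\kp$ is surjective; hence $\kc\setminus\kp(M)$ is contained in $\bar\kp(\overline M\setminus M)$, the image of the GIT boundary.

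It then remains to show that $\bar\kp(\overline M\setminus M)\cap\kc\subseteq\kc_2\cup\kc_6$, which is a local vanishing-cohomology computation. A cubic acquiring a single node contributes one vanishing class $\delta\in H^4$ with $(\delta)^2=2$ and $(\delta.h^2)=0$, so its limiting period lies in $\delta^\perp$ and $\ZZ\,h^2\oplus\ZZ\,\delta$, of discriminant $6$, is already saturated; Lemma~\ref{lem:HassettKv}(i) then forces $d=6$, i.e.\ a point of $\kc_6$, and the same conclusion holds a fortiori for every ADE-singular cubic. The chordal cubic, whose transcendental data is governed by a degree-$2$ K3 surface, accounts for the divisor $\kc_2$ (the lattice $K_2$, with Gram matrix $\left(\begin{smallmatrix}-3&1\\1&-1\end{smallmatrix}\right)$), while any remaining strictly semistable point of the boundary maps either into $\kc_2$ or into the cusps $\kc^{\ast}\setminus\kc$. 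Combining this with the inclusion of the previous paragraph gives $\kc\setminus\kp(M)=\kc_2\cup\kc_6$, hence the theorem. The genuinely hard part of this programme is precisely the GIT classification of semistable cubic fourfolds together with the proof that the period map extends to a morphism on $\overline M$ with the stated image --- in particular the local study at the ADE locus (purity of the limiting Hodge structure and holomorphic dependence) and at the chordal cubic (the verification that its limiting period sweeps out exactly $\kc_2$). These are the essentially global ingredients that go beyond Torelli; Looijenga's alternative route obtains more, identifying $\overline M$ with a Looijenga-type compactification of $\kc$ attached to the arrangement $\{\kc_2,\kc_6\}$, from which the theorem likewise follows.
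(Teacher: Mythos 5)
The paper itself does not prove this theorem: it establishes only the easy inclusion $M\subset\kc\setminus(\kc_2\cup\kc_6)$ --- excluding $\kc_6$ because $H^{2,2}(X,\ZZ)_{\rm pr}$ of a smooth cubic contains no class of square $2$ (Voisin), and $\kc_2$ by Hassett's limiting mixed Hodge structure argument --- and then cites \cite{LazaCubics,LooPeriod} for the reverse inclusion. Your treatment of the easy inclusion agrees with the paper's, and your outline of the hard direction is a faithful summary of Laza's strategy (GIT on $\overline M$, stability of cubics with ADE singularities, limit Hodge structures of nodal degenerations landing on $\kc_6$, the chordal cubic ${\rm Sec}(v_2(\PP^2))$ accounting for $\kc_2$). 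But, as you acknowledge, what you write is a programme rather than a proof: the GIT classification of (semi)stable cubic fourfolds, the purity and holomorphic variation of the limit Hodge structure over the ADE locus, and the analysis at the chordal cubic are precisely the content of \cite{LazaCubics} and are asserted here, not established.

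One step is wrong as literally stated: the period map does \emph{not} extend to a morphism $\bar\kp\colon\overline M\to\kc^\ast$. Your own final paragraph shows why: the chordal cubic is a single point of $\overline M$ (a closed strictly semistable orbit), yet its limiting periods sweep out the entire divisor $\kc_2$, so $\bar\kp$ has genuine indeterminacy there. Laza resolves this by blowing up $\overline M$ along the chordal locus (equivalently, Looijenga modifies the target, replacing $\kc^\ast$ by the semitoric compactification attached to the arrangement $\{\kc_2\}$), and the properness/surjectivity argument must be run on that modification, not on $\overline M$ itself. With this correction your argument closes as described: properness gives $\kc\setminus\kp(M)\subset$ image of the boundary, and the boundary lands in $\kc_2\cup\kc_6$ (ADE degenerations on $\kc_6$, the exceptional divisor over the chordal cubic on $\kc_2$). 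Your discriminant computation for a node --- $\delta$ with $(\delta.h^2)=0$ and square $2$ in $H^4$, so $\ZZ h^2\oplus\ZZ\delta$ has discriminant $6$ and is saturated by Lemma~\ref{lem:HassettKv} --- correctly places nodal limits on $\kc_6$.
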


To complete the picture, we state the following result. We refrain from giving a proof, but refer to similar
results in the theory of K3 surfaces \cite[Prop.\ 6.2.9]{HuyK3}.

\begin{prop}
The union $\bigcup\kc_d\subset\kc$ of all $\kc_d$ with $d$ satisfying $(\ast\!\ast\!\ast)$ is analytically dense in $\kc$.
Consequently, the union of all $\kc_d$ for satisfying $(\ast\ast')$ (or $(\ast\ast)$ or $(\ast))$ is analytically dense.
\end{prop}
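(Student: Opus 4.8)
## Proof proposal

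The plan is to imitate the density argument for Noether--Lefschetz loci in the theory of K3 surfaces, cf.\ \cite[Prop.\ 6.2.9]{HuyK3}. The key point is that each $\kc_d$ is (the image of) a hypersurface cut out by a rational quadratic condition on the period, and a countable union of such hypersurfaces accumulates everywhere as soon as the $d$'s in question form an infinite set whose associated Heegner conditions are Zariski dense in the relevant arithmetic sense. So the real content is twofold: first, that each individual $\kc_d$ with $d$ satisfying $(\ast\!\ast\!\ast)$ is nonempty and of codimension one in $\kc$; second, that the set of such $d$ is infinite (indeed, as we will see, it is enough that it be unbounded). The nonemptiness and codimension statement has essentially been recorded already: for $d$ satisfying $(\ast)$ one has the surjection $\cctwotilde_d\twoheadrightarrow\,\tilde{\!\kc}_d\twoheadrightarrow\kc_d\subset\kc$ with $\kc_d\ne\emptyset$, and a dimension count using $\dim D=20$, $\dim D_d=19$ shows $\kc_d$ is a divisor.

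First I would pass to the period domain $D\subset\PP(\Gamma\otimes\CC)$ and describe the preimage of $\bigcup\kc_d$ explicitly. For a Noether--Lefschetz vector $v\in\Gamma$ with $(v)^2<0$, the locus $D_v\coloneqq\{x\in D\mid (x.v)=0\}$ is a hyperplane section of $D$, hence a period subdomain of codimension one, and its image in $\kc$ is exactly $\kc_{d}$ with $d={\rm disc}(K_v)$. Thus $$\bigcup_{d\ \text{satisfying}\ (\ast\!\ast\!\ast)}\kc_d = \tilde\OO(\Gamma)\setminus\!\!\bigcup_{v}D_v,$$ where $v$ ranges over primitive vectors in $\Gamma$ with $(v)^2<0$ and ${\rm disc}(K_v)$ satisfying $(\ast\!\ast\!\ast)$. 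Because the quotient map $D\to\kc$ is open and continuous, it suffices to show that $\bigcup_v D_v$ is analytically dense in $D$.

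Next I would establish density of $\bigcup_v D_v$ in $D$ by a direct accumulation argument at an arbitrary point $x_0\in D$. Fix a small polydisc neighborhood $B$ of $x_0$. The vectors $v\in\Gamma\otimes\RR$ of a fixed negative square $-2n$ whose orthogonal hyperplane meets $B$ form a set with nonempty interior in the relevant quadric of $\Gamma\otimes\RR$ (this is where one uses that the period domain parametrizes \emph{positive} planes, so that negative vectors orthogonal to points of $B$ sweep out an open region); by the fact that integral vectors of square $-2n$ become equidistributed, resp.\ simply by letting $n\to\infty$ while rescaling, one produces for infinitely many values of $(v)^2$ a primitive integral $v$ with $D_v\cap B\ne\emptyset$. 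The one genuinely arithmetic wrinkle is to arrange, in addition, that ${\rm disc}(K_v)$ satisfies $(\ast\!\ast\!\ast)$; here I would invoke Proposition~\ref{prop:HassettEichler} (Eichler's criterion) to replace any such $v$ by one in a controlled $\tilde\OO(\Gamma)$-orbit, together with the numerical characterization $(\ast\!\ast\!\ast)\Leftrightarrow \exists\, a,n\in\ZZ\colon d=(2n^2+2n+2)/a^2$, which shows that the $d$ satisfying $(\ast\!\ast\!\ast)$ are unbounded (e.g.\ take $a=1$, so every $d=2n^2+2n+2$ qualifies). Combining: in any neighborhood $B$ of any $x_0$ there is a primitive $v$ with $(v)^2<0$, with $D_v\cap B\ne\emptyset$, and with ${\rm disc}(K_v)$ of the form $(\ast\!\ast\!\ast)$ — giving density. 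The final sentence is then immediate from the chain of implications $(\ast\!\ast\!\ast)\Rightarrow(\ast\ast)\Rightarrow(\ast\ast')\Rightarrow(\ast)$, since the union over the larger families only grows.

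The main obstacle I anticipate is the simultaneous control of two constraints on $v$: a \emph{geometric/archimedean} one (that $D_v$ passes near the prescribed point $x_0$) and an \emph{arithmetic} one (that ${\rm disc}(K_v)$ lies in the sparse-looking but unbounded set defined by $(\ast\!\ast\!\ast)$). Decoupling these is the crux. The cleanest route is probably not to try to hit both at once for a single $v$, but rather: use a density/equidistribution statement for primitive integral vectors of square $-2n$ in the open cone of negative vectors orthogonal to $B$ (valid for all sufficiently large $n$), and then note that among the admissible squares one is free to restrict to $-2n$ with $n$ of the special form making $d={\rm disc}(K_v)$ satisfy $(\ast\!\ast\!\ast)$ — using Lemma~\ref{lem:HassettKv} and Remark~\ref{rem:epxlitchoices} to read off $d$ from $(v)^2$ and the divisibility of $v$ in $\Gamma$. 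Since we have the freedom to pick $v$ in a whole $\tilde\OO(\Gamma)$-orbit (Proposition~\ref{prop:HassettEichler}) while keeping $D_v\cap B\ne\emptyset$ after possibly shrinking to a smaller sub-neighborhood, the two conditions can be reconciled. This is exactly the kind of argument carried out in the K3 case, and I would refer the reader there for the equidistribution input rather than reprove it.
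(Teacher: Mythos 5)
The paper deliberately refrains from proving this proposition and only points to the analogous K3 statement in \cite[Prop.\ 6.2.9]{HuyK3}, so there is no in-text proof to measure you against; I assess your sketch on its own terms. Your architecture is the right one and matches what such a proof must look like: pass to the period domain, note that the preimage of $\bigcup\kc_d$ is the union of the hyperplane sections $D_v=v^\perp\cap D$ over primitive $v$ with ${\rm disc}(K_v)$ satisfying $(\ast\!\ast\!\ast)$, observe that for a neighbourhood $B$ of a given period point the vectors $w\in\Gamma\otimes\RR$ with $(w)^2<0$ and $w^\perp\cap B\ne\emptyset$ form a nonempty open cone $C_B$, and produce a primitive lattice vector in $C_B$ with admissible discriminant; the reduction, the openness of $C_B$, the unboundedness of the set $\{2n^2+2n+2\}$ of $(\ast\!\ast\!\ast)$-discriminants, and the final deduction of the $(\ast\ast)$, $(\ast\ast')$, $(\ast)$ cases from the chain of implications are all correct.

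The gap is in the crux, which you correctly isolate but do not close. Density of $\Gamma\otimes\QQ$ yields a primitive $v\in C_B\cap\Gamma$ with no control on $(v)^2$ or on the divisibility of $v$ in $\bar\Gamma$, hence none on $d={\rm disc}(K_v)$. Your first mechanism --- invoking Proposition \ref{prop:HassettEichler} to ``replace $v$ by one in a controlled $\tilde\OO(\Gamma)$-orbit while keeping $D_v\cap B\ne\emptyset$ after possibly shrinking'' --- is not an argument: applying $g\in\tilde\OO(\Gamma)$ moves $D_v$ to $D_{g(v)}$, which has no reason to still meet $B$, and shrinking $B$ only shrinks $C_B$; Eichler's criterion identifies the set of $v$ with given invariants as one orbit, but whether that orbit meets $C_B$ is precisely the question being begged. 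Your second mechanism, equidistribution of primitive vectors of square $-2n$ on the real quadric, is a genuine theorem in signature $(2,20)$ and would suffice, but it is heavy machinery and you would in addition have to control the divisibility of $v$, since for $d\equiv 2\,(6)$ one needs $(v.\Gamma)=3\ZZ$ on top of the value of $(v)^2$. The clean repair, which your sketch misses, is to aim only at $(v)^2\equiv\pm 2\,(6)$: by the remark following Lemma \ref{lem:HassettKv} the discriminant is then forced to be $d=-3\,(v)^2$ with no divisibility ambiguity, and $(v)^2=-2(3j^2+3j+1)$ gives $d=2(n^2+n+1)$ with $n=3j+1$, which satisfies $(\ast\!\ast\!\ast)$ with $a=1$. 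Such $v$ can then be produced in $C_B$ elementarily: take a primitive $u\in C_B\cap\Gamma$ with $(u)^2=-2s$, a hyperbolic plane $\langle e',f'\rangle\subset u^\perp\cap\Gamma$ (it exists by Nikulin's criterion), and set $v=Nu+(s-1)e'+f'$, which is primitive, lies in $C_B$ for $N\gg0$, and has $(v)^2=-2(sN^2-s+1)$; the equation $sN^2-s+1=3j^2+3j+1$ becomes the Pell-type equation $4sN^2-3(2j+1)^2=4s-3$, seeded by $(N,j)=(1,0)$ and propagated to infinitely many $N$ by the automorphisms of this indefinite binary form (with a minor adjustment when $3s$ is a square). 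Some arithmetic input of this kind is unavoidable, and it is exactly what is absent from your proposal.
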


 \begin{remark} On the level of moduli spaces, the theory of K3 surfaces is linked
 with the theory of cubic fourfolds in terms of the morphism $$\Phi_\varepsilon\colon M_d\subset\km_d\to \kc_d\subset\kc,$$
cf.\ Corollary \ref{cor:piceps}. Note that the image of a point $[(S,L)]\in M_d$ corresponding to a polarized K3 surface $(S,L)$ can a priori
be contained in the
boundary $\kc\setminus M=\kc_2\cup \kc_6$. However, unless $d=2$ or $d=6$, generically this is not the case and the map defines a rational
map $$\xymatrix{\Phi_\varepsilon\colon M_d\ar@{-->}[r]&M},$$ which is of degree one or two. 
\end{remark}

\subsection{} In Section \ref{sec:HodgeAssoK3C} we have linked Hodge theory of K3 surfaces and Hodge theory of cubic fourfolds. We will now
cast this in the framework of period maps and moduli spaces, i.e.\ in terms of the maps $\Phi_\varepsilon$.

\begin{prop}
A smooth cubic fourfold $X$ and a polarized K3 surface $(S,L)$ are associated, $(S,L)\sim X$, in the sense
of Definition \ref{def:assoK3C} if and only if $\Phi_\varepsilon[(S,L)]=[X]$ for some choice of $\varepsilon\colon\Gamma_d\congpf\Lambda_d$:
$$(S,L)\sim X~\Leftrightarrow ~Ê\exists\,Ê\varepsilon\colon\Phi_\varepsilon[(S,L)]=[X].$$
\end{prop}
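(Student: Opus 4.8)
The plan is to unwind both sides of the claimed equivalence into statements about the lattice $\widetilde\Lambda$ and to verify that each is a translation of the other through the isometry $\varepsilon\colon\Gamma_d\congpf\Lambda_d$ provided by Lemma \ref{lem:OOLamL}, together with the period descriptions of $M_d$ and $\kc$. First I would recall, from Section \ref{sec:HodgeAssoK3C} and in particular from the chain $(S,L)\sim X\Rightarrow S\sim X$ and Corollary \ref{cor:piceps}, that for a cubic $X$ lying in $\kc_d$ the period point $x\in D$ of $X$ has the property that its image in $\kc$ lies in $\kc_d$, i.e. after suitable marking $x\in D_d\subset D$. The definition $(S,L)\sim X$ asks for an isometric embedding of Hodge structures $H^2(S,\ZZ)_{L\text{-pr}}\,\hookrightarrow H^4(X,\ZZ)_{\rm pr}(1)$; since both sides have coranks that match up to a single hyperbolic plane by \cite[Rem.\ 14.1.13]{HuyK3}, such an embedding exists precisely when the two weight-two Hodge structures on $\Lambda_d$ (from $(S,L)$) and on $\Gamma_d$ (from $X$) agree after transporting one across $\varepsilon$. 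This is exactly the statement that the period point of $(S,L)$ in $Q_d$ and the period point of $X$ in $D_d$ correspond under the isomorphism $Q_d\cong D_d$ induced by $\varepsilon$, which is the content of the isomorphism $\km_d\cong\cctwotilde_d$ (resp.\ $\cong\,\tilde{\!\kc}_d$) in Corollary \ref{cor:piceps}.

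Concretely the key steps, in order, are: (1) Assume $(S,L)\sim X$. Then $S\sim X$, so $\widetilde H(S,\ZZ)\cong\widetilde H(X,\ZZ)$; chasing through the embedding (\ref{eqn:HassettAsso}) and the fact that $A_2\subset\widetilde H^{1,1}(X,\ZZ)$ is algebraic with $A_2^\perp\cong H^4(X,\ZZ)_{\rm pr}(1)$, one obtains that the Hodge structure on $H^4(X,\ZZ)_{\rm pr}(1)\cong\Gamma$ restricts, on the sublattice $\Gamma_d=L_d^\perp$ (equivalently $K_d^\perp$), to the one coming from $H^2(S,\ZZ)_{L\text{-pr}}\cong\Lambda_d$ under $\varepsilon$. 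Hence the period of $X$ in $D_d$ equals $\varepsilon_*$ of the period of $(S,L)$ in $Q_d$, so after passing to the quotients $\km_d\twoheadrightarrow\cctwotilde_d$ (or $\,\tilde{\!\kc}_d$) and composing with the normalization onto $\kc_d$, we get $\Phi_\varepsilon[(S,L)]=[X]$ for this $\varepsilon$. (2) Conversely, if $\Phi_\varepsilon[(S,L)]=[X]$ for some $\varepsilon\colon\Gamma_d\congpf\Lambda_d$, then unravelling the definition of $\Phi_\varepsilon$ from Corollary \ref{cor:piceps}, the period of $X$ lies in the $\tilde\OO(\Gamma)$-orbit of $\varepsilon_*$ of the period of $(S,L)$; since $\varepsilon$ identifies $\Gamma_d$ with $\Lambda_d$ compatibly with the discriminant groups and since $\Lambda_d=\ell^\perp\subset\Lambda$ embeds with a hyperbolic plane orthogonal complement, one lifts this identification of Hodge structures on $\Gamma_d$ to the isometric embedding (\ref{eqn:HassettAsso}), giving $(S,L)\sim X$. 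Throughout one has to be a little careful about the ambiguity by $\tilde\OO(\Gamma,K_d)/\tilde\OO(\Gamma,v_d)$ (index $\le 2$ by Lemma \ref{lem:KDOO}) and the finite fibres of the normalization $\,\tilde{\!\kc}_d\twoheadrightarrow\kc_d$, but these only affect \emph{which} $\varepsilon$ works, not the existential statement.

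The main obstacle, I expect, is step (2): extending an abstract Hodge-isometry $\Gamma_d\cong\Lambda_d$ (which a priori lives only on the primitive/orthogonal pieces, matching periods) to the honest \emph{embedding} of Hodge structures $H^2(S,\ZZ)_{L\text{-pr}}\,\hookrightarrow H^4(X,\ZZ)_{\rm pr}(1)$ demanded by Definition \ref{def:assoK3C}(i), rather than just an isometry of the smaller lattices. The point is that $H^2(S,\ZZ)_{L\text{-pr}}$ has rank $21$ while $H^4(X,\ZZ)_{\rm pr}(1)$ has rank $22$, so one needs to realize $\Lambda_d$ inside $\Gamma$ primitively and Hodge-compatibly; this is where one invokes uniqueness of primitive embeddings $\Lambda_d\,\hookrightarrow\widetilde\Lambda$ up to $\OO(\widetilde\Lambda)$ (as in the proof of Lemma \ref{lem:OOLamL}) to place the image of $\Lambda_d$ as $L_d^\perp$ with $\lambda_1\in L_d$ giving the ``missing'' direction, and then one checks that $\lambda_1$ stays algebraic, which it is since $A_2\subset\widetilde H^{1,1}(X,\ZZ)$. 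Once this lattice-embedding bookkeeping is done, the Hodge-theoretic statement follows because the period point determines the full weight-two Hodge structure and everything in sight is already forced to be of type $(1,1)$ outside the rank-$22$ transcendental part.
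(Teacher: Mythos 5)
Your overall strategy --- unwinding both sides into the statement that, after suitable markings, the period point of $X$ in $D_d$ and the period point of $(S,L)$ in $Q_d$ correspond under $\varepsilon$ up to the group actions --- is the same as the paper's. Your step (2) is handled essentially correctly: one composes the marking $H^2(S,\ZZ)_{L\text{\rm -pr}}\cong\Lambda_d$ with $\varepsilon^{-1}\colon\Lambda_d\congpf\Gamma_d\subset\Gamma\cong H^4(X,\ZZ)_{\rm pr}$, choosing the marking of $X$ that absorbs the $\tilde\OO(\Gamma)$-ambiguity in the definition of $\Phi_\varepsilon$. Note, however, that you have located the ``main obstacle'' in the wrong direction: this converse is the easy half, and the uniqueness of primitive embeddings $\Lambda_d\hookrightarrow\widetilde\Lambda$ plays no role in it --- matching period points under a composition of lattice isometries automatically produces a Hodge isometric embedding, since the period point determines the whole weight-two Hodge structure.

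The real content sits in your step (1), and there you assert without justification that the image of $H^2(S,\ZZ)_{L\text{\rm -pr}}$ under the embedding (\ref{eqn:HassettAsso}) is the distinguished sublattice $\Gamma_d=v_d^\perp$. A priori it is only some corank-one sublattice of $\Gamma$ whose orthogonal complement is generated by a primitive Noether--Lefschetz vector $v$ with the correct discriminant. The missing step is Proposition \ref{prop:HassettEichler} (Eichler's criterion): it supplies $g\in\tilde\OO(\Gamma)$ with $g(v)=\pm v_d$, hence moves the image to the standard $\Gamma_d$, and only after this normalization can you read off the period of $X$ as a point of $D_d$ and compare it with $\varepsilon_*$ of the period of $(S,L)$; composing $g$ with the marking of $(S,L)$ is precisely what produces the required $\varepsilon$. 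The detour through $S\sim X$ and the Mukai lattice $\widetilde H(X,\ZZ)$ does not substitute for this, since extending to $\widetilde\Lambda$ says nothing about where inside $\Gamma$ the image of $\Lambda_d$ sits. With Proposition \ref{prop:HassettEichler} inserted at this point, your argument closes up and coincides with the paper's proof.
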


\begin{proof}
Assume $\Phi_\varepsilon[(S,L)]=[X]$.
Pick an arbitrary marking
$H^2(S,\ZZ)\congpf\Lambda$ with $L\mapsto \ell$. Composing the  induced iso\-metry $H^2(S,\ZZ)_{L\text{\rm -pr}}\congpf\Lambda_d$ with $\varepsilon^{-1}\colon\Lambda_d\congpf\Gamma_d\subset\Gamma$ yields a point in $D_d\subset D$. 
Then there exists a marking $ H^4(X,\ZZ)_{\rm pr}\cong \Gamma$ such that $X$ yields the same period point in $D$,
which thus yields  a Hodge isometric embedding $H^2(S,\ZZ)_{L\text{\rm -pr}}\,\hookrightarrow H^4(X,\ZZ)_{\rm pr}(1)$.
Conversely, any such Hodge isometric embedding defines
a sublattice of $\Gamma\cong H^4(X,\ZZ)_{\rm pr}$ isomorphic to some $v^\perp$ which after applying some element in $\OO(\Gamma)$ becomes $\Gamma_d$, see Proposition \ref{prop:HassettEichler}. Composing with a marking of $(S,L)$ yields the appropriate $\varepsilon$.
\end{proof}

\begin{cor}\label{cor:numchar}
Let $X$ be a smooth cubic fourfold.
\begin{enumerate}
\item[{\rm (i)}] For fixed $d$, there exists a  polarized K3 surface $(S,L)$ of degree $d$ with  $X\sim (S,L)$ if and only if $X\in\kc_d$ 
and $d$ satisfies $(\ast\ast)$.
\item[{\rm (ii)}] There exists a twisted K3 surface $(S,\alpha)$ with $X\sim (S,\alpha)$ if and only if $X\in\kc_d$ for
some $d$ satisfying $(\ast\ast')$.
\end{enumerate}
\end{cor}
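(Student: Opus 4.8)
The plan is to deduce both equivalences from Proposition~\ref{prop:UUn} once one has a clean dictionary between the condition ``$X\in\kc_d$'' and the lattice theory of $\widetilde H(X,\ZZ)$. So I would first prove the following translation: \emph{$X\in\kc_d$ if and only if $\widetilde H^{1,1}(X,\ZZ)$ contains a primitive vector $v\in A_2^\perp$ with ${\rm disc}(L_v)=d$, equivalently contains a copy of $L_d$ through the distinguished $A_2$.} This is obtained by unwinding the definition of $\kc_d$ as the image of $\cctwotilde_d=\tilde\OO(\Gamma,v_d)\setminus D_d$ in $\kc=\tilde\OO(\Gamma)\setminus D$: a period $x\in D$ maps into $\kc_d$ exactly when $x\perp v$ for some $v$ in the $\tilde\OO(\Gamma)$-orbit of $\pm v_d$, and by Proposition~\ref{prop:HassettEichler} this orbit is the full set of primitive $v\in\Gamma$ with ${\rm disc}(L_v)=d$; for $x=\widetilde H^{2,0}(X)$ the condition $x\perp v$ just says $v\in\widetilde H^{1,1}(X,\ZZ)$. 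Since $A_2\subset\widetilde H^{1,1}(X,\ZZ)$ by Remark~\ref{rem:KuznH} and $\widetilde H^{1,1}(X,\ZZ)$ is saturated in $\widetilde H(X,\ZZ)\cong\widetilde\Lambda$, one gets $L_v=\overline{A_2+\ZZ v}\subset\widetilde H^{1,1}(X,\ZZ)$, and $L_v\cong L_d$ because the transformation in Proposition~\ref{prop:HassettEichler} acts trivially on $A_\Gamma$ and hence extends to $\widetilde\Lambda$ fixing $A_2$.

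For part~(i), ``$\Leftarrow$'': from $X\in\kc_d$ the dictionary gives $L_v\cong L_d$ inside $\widetilde H^{1,1}(X,\ZZ)$, and $(\ast\ast)$ gives a primitive $U\hookrightarrow L_d\cong L_v$; as $L_v$ is saturated this is a primitive $U\hookrightarrow\widetilde H^{1,1}(X,\ZZ)$, so $U^\perp\cong\Lambda$ in $\widetilde H(X,\ZZ)$. As in the proof of Proposition~\ref{prop:UUn}, surjectivity of the period map for K3 surfaces realises the induced Hodge structure on $U^\perp$ as $H^2(S,\ZZ)$ for a K3 surface $S$; since $U\hookrightarrow L_v$ splits, $L_v\cong U\oplus\ZZ(d)$ and $L_v\cap U^\perp$ is spanned by an algebraic class $\ell$ with $(\ell)^2=d>0$, which after the Weyl group action is an ample class $L$ (so $S$ is projective), and one checks $H^2(S,\ZZ)_{L\text{\rm -pr}}=(\ell)^\perp\cap U^\perp=L_v^\perp=\Gamma_d\subset A_2^\perp=H^4(X,\ZZ)_{\rm pr}(1)$ as Hodge structures, giving $(S,L)\sim X$ with $(L)^2=d$. ``$\Rightarrow$'': a Hodge-isometric embedding $H^2(S,\ZZ)_{L\text{\rm -pr}}\hookrightarrow A_2^\perp$ with $(L)^2=d$ identifies $\Lambda_d$ with a primitive corank-one sublattice of $\Gamma$ whose orthogonal complement $\ZZ v$ lies in $\widetilde H^{1,1}(X,\ZZ)$ and in the negative definite lattice $A_2^\perp\cap\widetilde H^{1,1}(X,\ZZ)$; hence $v$ is a Noether--Lefschetz vector with ${\rm disc}(L_v)={\rm disc}(\Lambda_d)=d$, so $X\in\kc_d$, while $\Gamma_d\cong L_v^\perp\cong\Lambda_d$ gives $(\ast\ast)$ by Lemma~\ref{lem:OOLamL}.

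For part~(ii), ``$\Leftarrow$'' is immediate from the dictionary and Proposition~\ref{prop:UUn}(ii): $X\in\kc_d$ gives $L_d\cong L_v\subset\widetilde H^{1,1}(X,\ZZ)$, $(\ast\ast')$ gives $U(n)\hookrightarrow L_d$ for some $n\ne0$, hence $U(n)\hookrightarrow\widetilde H^{1,1}(X,\ZZ)$ and a twisted K3 surface $(S,\alpha)\sim X$. For ``$\Rightarrow$'' I would start from the embedding $U(n)\hookrightarrow\widetilde H^{1,1}(X,\ZZ)$, $n\ne0$, of Proposition~\ref{prop:UUn}(ii), and pick a primitive isotropic vector $e$ in it. The point is that $e\notin A_2^\perp$, because $A_2^\perp\cap\widetilde H^{1,1}(X,\ZZ)$ is negative definite while $(e)^2=0$; therefore $A_2+\ZZ e$ has rank three and its saturation $N\subset\widetilde H^{1,1}(X,\ZZ)$ is non-degenerate of signature $(2,1)$ (an element of the radical of $N$ would be orthogonal to $A_2$ and isotropic, so zero). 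Taking a primitive generator $v$ of the rank-one lattice $N\cap A_2^\perp$, one finds that $v$ is a Noether--Lefschetz vector and $N=\overline{A_2+\ZZ v}=L_v$, so $X\in\kc_d$ with $d={\rm disc}(L_v)$; and since $L_v=N$ is even, non-degenerate and contains a primitive isotropic vector, it contains some $U(m)$ with $m\ne0$ (if $(e.f)=k\ne0$ then $e$ and $2kf-(f)^2e$ span $U(2k^2)$), so $(\ast\ast')$ holds for $d$.

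I expect the main obstacle to be the degree bookkeeping in part~(i): the equivalence must match the degree $d$ of the polarization with the discriminant ${\rm disc}(L_v)$, which forces one to make the splitting $L_v\cong U\oplus\ZZ(d)$ explicit and to identify $\Gamma_d$ simultaneously as a primitive corank-one sublattice of $A_2^\perp\subset\widetilde H(X,\ZZ)$ and of $H^2(S,\ZZ)$. The other delicate point, in part~(ii), is recovering an honest Noether--Lefschetz vector from a possibly ``twisted'' hyperbolic plane $U(n)$; there everything hinges on the negative-definiteness of $A_2^\perp\cap\widetilde H^{1,1}(X,\ZZ)$, which both forbids $e\perp A_2$ and forces $\overline{A_2+\ZZ e}$ to have signature $(2,1)$. (Alternatively, the two ``$\Leftarrow$'' directions could be run through the finite morphisms $\Phi_\varepsilon\colon\km_d\to\kc_d$ of Corollary~\ref{cor:piceps} and the global Torelli theorems, in which case the work shifts to showing that the Noether--Lefschetz boundary $\km_d\setminus M_d$ is sent into $\kc_6$, so that a smooth cubic in $\kc_d$ is necessarily hit by $M_d$.)
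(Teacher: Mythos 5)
Your overall strategy is the paper's: translate $X\in\kc_d$ into the existence of a sublattice $L_d\cong L_v\subset\widetilde H^{1,1}(X,\ZZ)$ through the distinguished $A_2$, and then feed $(\ast\ast)$, respectively $(\ast\ast')$, into Proposition \ref{prop:UUn}. Your part (ii) is correct, and in fact more detailed than the paper's one-line argument: the reduction to a rank-three lattice via a primitive isotropic $e\in U(n)$, using that $A_2$ spans the positive $2$-plane and that $A_2^\perp\cap\widetilde H^{1,1}(X,\ZZ)=H^{2,2}(X,\ZZ)_{\rm pr}$ is negative definite, is exactly the right way to make precise the paper's claim that $U(n)$ forces some $L_d$ with $(\ast\ast')$.

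The genuine gap is in part (i), direction ``$\Leftarrow$'', at the words ``which after the Weyl group action is an ample class $L$''. Reflections in algebraic $(-2)$-classes only move $\ell$ into the nef cone; the resulting big and nef class is ample if and only if there is no algebraic $\delta_S\in H^2(S,\ZZ)$ with $(\delta_S)^2=-2$ and $(\delta_S.\ell)=0$, and no Weyl group element can dispose of such a class, since it persists in $w(\ell)^\perp$ for every $w$. This is precisely the point the paper's proof isolates as the content of (i): such a $\delta_S$ would lie in $H^2(S,\ZZ)_{L\text{\rm -pr}}\cong\Gamma_d\subset H^4(X,\ZZ)_{\rm pr}(1)$ and hence correspond to a class of square $2$ in $H^{2,2}(X,\ZZ)_{\rm pr}$, which is impossible for a \emph{smooth} cubic fourfold by Voisin's result (equivalently, $M\cap(\kc_2\cup\kc_6)=\emptyset$). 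Without this input you only produce a quasi-polarized K3 surface, whereas Definition \ref{def:assoK3C}(i) requires an honest polarization. You do acknowledge this issue, but only in the parenthetical ``alternative route'' at the end; it is needed in your main argument as well. With that one step supplied, the proof is complete and coincides with the paper's.
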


\begin{proof}
Consider $\km_d$ as the moduli space of quasi-polarized K3 surfaces $(S,L)$, i.e.\ with $L$ only big and nef but not necessarily
ample. One then has to show that
whenever  there exists a Hodge isometric embedding $H^2(S,\ZZ)_{L\text{\rm -pr}}\,\hookrightarrow H^4(X,\ZZ)_{\rm pr}(1)$, then $L$ is not orthogonal
to any algebraic class $\delta_S \in H^2(S,\ZZ)$ with $(\delta_S)^2=-2$. Indeed,  in this case $L$ would be automatically ample. However, such a class
$\delta_S$ would correspond to a class $\delta\in H^{2,2}(X,\ZZ)_{\rm pr}$ with $(\delta)^2=2$, which contradicts $[X]\in M=\kc\setminus(\kc_2\cup\kc_6)$.
Of course, the argument is purely Hodge theoretic and one can easily avoid talking about quasi-polarized K3 surfaces.

To prove  (ii), observe that the period of $X$ is contained in  $D_d$ if and only if
one finds $L_d\,\hookrightarrow \widetilde H^{1,1}(X,\ZZ)$. If $d$ satisfies $(\ast\ast')$, then there exists $U(n)\,\hookrightarrow L_d$
and we can conclude by Proposition \ref{prop:UUn}. Conversely, if $(S,\alpha)\sim X$, one finds $U(n)\,\hookrightarrow\widetilde H^{1,1}(S,\alpha,\ZZ)\cong \widetilde H^{1,1}(X,\ZZ)$.
As there also exists a positive plane $A_2\,\hookrightarrow \widetilde H^{1,1}(X,\ZZ)$, the lattice $U(n)$ is contained
in a primitive sublattice of rank three in $H^{1,1}(X,\ZZ)$, which is then necessarily of the form $L_d$ for some $d$ satisfying $(\ast\ast')$.
\end{proof}

A geometric interpretation of the condition $(\ast\!\ast\!\ast)$, involving the Fano variety of lines $F(X)$, will be explained
in the next section, see Proposition \ref{prop:AddHasHuy}. The conditions $(\ast\ast)$ and $(\ast\ast')$ will occur there again as well.

\begin{remark}
Note that a given cubic fourfold $X$ can be associated with more than one polarized K3 surface $(S,L)$ and, in fact, sometimes even
with infinitely many $(S,L)$. To start,
there are the finitely many choices of $\varepsilon\in\OO(\Lambda_d)/\tilde\OO(\Lambda_d)$, see \cite[Thm.\ 5.2.3]{HassComp}. Then, $\Phi_\varepsilon$
is only generically injective for $d$ satisfying  $(\ast\ast)_2$ and even of degree two for $(\ast\ast)_0$. And finally, $X$ could
be contained in more than one $\kc_d$. In fact, it can happen that $X\in\kc_d$ for infinitely many $d$ satisfying $(\ast\ast)$.
To be more precise, depending on the degree $d$, there may exist non-isomorphic K3 surfaces $S$ and $S'$ endowed
with polarizations $L$ and $L'$, respectively, such there nevertheless
exists a Hodge isometry $H^2(S,\ZZ)_{L\text{\rm -pr}}\cong H^2(S',\ZZ)_{L'\text{\rm -pr}}$. Indeed, the latter may not extend to a Hodge isometry $H^2(S,\ZZ)\cong H^2(S',\ZZ)$, see Section \ref{sec:HodgeAssoK3C}.

The situation is not quite as bad as it sounds. Although there may be infinitely many polarized K3 surfaces $(S,L)$ associated with one
$X$, only finitely many isomorphism types of unpolarized K3 surfaces $S$ will be involved.
\end{remark}

\begin{remark}
In \cite{Brakkee} a geometric interpretation for the generic fibre of the rational map $\Phi_\varepsilon\colon M_d\to \kc_d$ in the case
$d\equiv0\,(6)$ is described. It turns out that $\Phi_\varepsilon[(S,L)]=[(S',L')]$ implies that $S'$ is isomorphic to $M(3,L,d/6)$, the moduli
space of stable bundles on $S$ with the indicated Mukai vector.
\end{remark}

\section{Fano perspective}
We come back to the Hodge structure $v(\lambda_1)^\perp\subset\widetilde H(X,\ZZ)$,
see Remarks \ref{rem:l1perp} and \ref{rem:KuznH}. To give it a geometric interpretation,
we consider the Fano correspondence
\begin{equation}\label{eqn:Fanocorr}
\xymatrix{F(X)&\ar[l]_-p {\mathbb L}\ar[r]^-q&X.}
\end{equation}
Here, $F(X)$ is the Fano variety of lines contained in $X$, $p\colon\LL\to F(X)$ is the universal line, and $q$ is the natural projection,
cf.\ \cite[Ch.\ 3]{HuyCub} for details and references.
Due to work of Beauville and Donagi \cite{BD}, it is known that $F(X)$ is a four-dimensional
hyperk\"ahler manifold deformation equivalent to the Hilbert scheme $S^{[2]}$ of a K3 surface $S$.

\subsection{} The fact that $F(X)$ is of ${\rm K3}^{[2]}$-type implies that $H^2(F(X),\ZZ)$ with the Beauville--Bogomolov pairing
is isometric to the lattice $H^2(S^{[2]},\ZZ)\cong\Lambda\oplus\ZZ(-2)$. But the cohomology of the Fano variety can also be 
compared to $\widetilde H(X,\ZZ)$ by the following combination of \cite{AddCub,BD}.

\begin{thm}[Beauville--Donagi, Addington]
The Fano correspondence (\ref{eqn:Fanocorr}) induces two compatible Hodge isometries
$$\begin{array}{lcl}
H^4(X,\ZZ)_{\rm pr}(1)&\congpf& H^2(F(X),\ZZ)_{\rm pr}\\
\phantom{HH}\bigcap&&\phantom{HH}\bigcap\\
\phantom{H,}v(\lambda_1)^\perp&\congpf& H^2(F(X),\ZZ)\\
\phantom{HH}\bigcap&&\phantom{HH}\\
~~\widetilde H(X,\ZZ).&&
\end{array}$$
\end{thm}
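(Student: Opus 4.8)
The plan is to realise both isometries as restrictions of a single morphism $\theta\colon\widetilde H(X,\ZZ)\to H^2(F(X),\ZZ)$ induced by the Fano correspondence (\ref{eqn:Fanocorr}), and then read off the diagram from its properties. Here $\theta$ is obtained by running the (suitably twisted) structure sheaf of the universal line $\LL$ through $q^{\ast}$ and $p_{\ast}$ at the level of topological $K$-theory, composing with the Mukai vector, the right-orthogonal projection onto $K_{\rm top}'(X)=\widetilde H(X,\ZZ)$, and the projection of $H^{\ast}(F(X),\QQ)$ onto its degree-two part; it is defined over $\ZZ$ because $(w_i)^2=1$ and $H^{\ast}(F(X),\ZZ)$ is torsion free. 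The first step is to identify $\theta$ on $H^4(X,\ZZ)_{\rm pr}(1)\subset\widetilde H(X,\ZZ)$ with the Abel--Jacobi map $p_{\ast}q^{\ast}$ and to invoke Beauville--Donagi: this map is a morphism of Hodge structures of bidegree $(-1,-1)$ sending $H^{3,1}(X)$ to $H^{2,0}(F(X))$, restricting on primitive parts to an isomorphism $H^4(X,\ZZ)_{\rm pr}(1)\congpf H^2(F(X),\ZZ)_{\rm pr}$, and compatible up to sign with the intersection form on $X$ and the Beauville--Bogomolov form on $F(X)$ (the latter a projection-formula computation using the known intersection numbers on $\LL$, the relation $p_{\ast}q^{\ast}(h^2)=g$ with $g$ the Pl\"ucker polarization, $(g)^2=6$, and Beauville's formula). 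This yields the top horizontal arrow.

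The second step is to control $\theta$ on the extra direction. On the cubic side, $v(\lambda_1)^{\perp}$, which in the model $\widetilde\Lambda$ is $\lambda_1^{\perp}$, contains $A_2^{\perp}=H^4(X,\ZZ)_{\rm pr}(1)$ with corank one; by Remark \ref{rem:l1perp} its ambient lattice is $\Lambda\oplus\ZZ(-2)$, and $A_2^{\perp}\oplus\ZZ\,(\lambda_1+2\lambda_2)\subset\lambda_1^{\perp}$ has index three, with $(\lambda_1+2\lambda_2)^2=6$. On the Fano side, $F(X)$ is of ${\rm K3}^{[2]}$-type, so $H^2(F(X),\ZZ)\cong\Lambda\oplus\ZZ(-2)$ too, it contains $H^2(F(X),\ZZ)_{\rm pr}=g^{\perp}$ with corank one, and $g^{\perp}\oplus\ZZ\,g\subset H^2(F(X),\ZZ)$ has index three. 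The claim is that $\theta(v(\lambda_1))=0$ and $\theta(\lambda_1+2\lambda_2)=\pm g$ modulo $g^{\perp}$: the class $v(\lambda_1)$ is the right-orthogonal projection of $v(\ko_L(1))$ for a line $L$, which is a single point of $F(X)$, so the correspondence collapses it, while the value on $\lambda_1+2\lambda_2$ is pinned down by the same intersection-theoretic computation as above. Since $(v(\lambda_1))^2=2\ne0$ we have $v(\lambda_1)\notin v(\lambda_1)^{\perp}$, so $\theta|_{v(\lambda_1)^{\perp}}$ is injective; compatibility of the pairings up to sign (which on the corank-one sublattice is Beauville--Donagi, and on the rank-one complement is $(g)^2=6=(\lambda_1+2\lambda_2)^2$) makes it an isometric embedding onto a finite-index sublattice, and ${\rm disc}(\lambda_1^{\perp})=2={\rm disc}\,H^2(F(X),\ZZ)$ forces surjectivity. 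This gives the middle isometry $v(\lambda_1)^{\perp}\congpf H^2(F(X),\ZZ)$, compatible with the top one by construction, and the vertical inclusion $v(\lambda_1)^{\perp}\subset\widetilde H(X,\ZZ)$ is built in. It is automatically a Hodge isometry: both $(2,0)$-parts sit in the corank-one primitive sublattices, where it is the Beauville--Donagi Hodge isometry, and the extra directions $\lambda_1+2\lambda_2$ and $g$ are algebraic on both sides.

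The genuine obstacle is this second step: upgrading the classical Beauville--Donagi Hodge isomorphism of primitive cohomologies to a $K$-theoretic (or Chow-theoretic) statement about the Fano correspondence that computes $\theta$ on the algebraic classes $v(\lambda_1),v(\lambda_2)$ and fixes the normalising constant so that one gets an isometry, not a mere similitude. This is the content of Addington's argument in \cite{AddCub} and is where the explicit geometry of $\LL$ and $F(X)$ enters; note also that producing the `missing' generator of $H^2(F(X),\ZZ)$ of square $-2$ already requires the source to be $\widetilde H(X,\ZZ)=K_{\rm top}'(X)$ rather than $H^4(X,\ZZ)$, via the same integral class as in the proof of Proposition \ref{prop:ATKtop}. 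If one takes Beauville--Donagi as a black box, a less canonical route avoids $K$-theory: both $H^4(X,\ZZ)_{\rm pr}(1)$ and $H^2(F(X),\ZZ)_{\rm pr}$ are corank-one primitive sublattices of $\Lambda\oplus\ZZ(-2)$, realised as an index-three even overlattice of $\Gamma\oplus\ZZ(6)$ corresponding to an isotropic subgroup of order three in $A_{\Gamma}\oplus A_{\ZZ(6)}\cong\ZZ/3\oplus\ZZ/6$; a short discriminant-form computation (Nikulin's gluing) shows that whatever the Beauville--Donagi isometry does on $A_{\Gamma}\cong\ZZ/3$, one can choose $\pm{\rm id}$ on the $\ZZ(6)$-factor so that the two gluings match, so the isometry of the primitive parts extends over the rank-one complements, and it is a Hodge isometry for the reason just given.
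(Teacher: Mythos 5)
Your proposal follows essentially the same route as the paper: Beauville--Donagi gives the Hodge isometry on the primitive parts, the $K$-theoretic Fano correspondence ${\rm c}_1\circ p_*\circ q^*$ together with Addington's computation $\lambda_1+2\lambda_2\mapsto g$ (with $(\lambda_1+2\lambda_2)^2=6=(g)^2$) produces the isometric embedding of the index-three sublattice $A_2^\perp\oplus\ZZ\,(\lambda_1+2\lambda_2)\subset\lambda_1^\perp$, and a discriminant count (here ${\rm disc}=2$ on both sides) extends this to the isometry $v(\lambda_1)^\perp\congpf H^2(F(X),\ZZ)$ — which is exactly the paper's outline, with your integrality-of-$\theta$ and Nikulin-gluing remarks filling in the extension step the paper leaves implicit. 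The only blemish is the inference ``$\theta(v(\lambda_1))=0$ and $v(\lambda_1)\notin v(\lambda_1)^\perp$, hence $\theta|_{v(\lambda_1)^\perp}$ injective,'' which does not follow as stated (one would need $\ker\theta\cap v(\lambda_1)^\perp=0$), but this is harmless since injectivity is automatic once the restriction is shown to be an isometry of a nondegenerate lattice.
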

On the left hand side, $H^4(X,\ZZ)_{\rm pr}(1)\subset v(\lambda_1)^\perp\subset \widetilde H(X,\ZZ)$ is the Hodge structure
introduced earlier on the sublattice $v(\lambda_1)^\perp\cong\lambda_1^\perp\cong \Lambda\oplus\ZZ(-2)$.
As before, the sign of the intersection pairing on $H^4(X,\ZZ)_{\rm pr}$ is changed.
On the right hand side, $H^2(F(X),\ZZ)_{\rm pr}$ is the primitive cohomology with respect to the Pl\"ucker polarization
$g\in H^2(F(X),\ZZ)$. It is endowed with a natural quadratic form, the Beauville--Bogomolov form on the hyperk\"ahler fourfold $F(X)$.
We shall not attempt to prove the result but we will  define  the maps that are used and indicate the main steps of
 the argument. 
 
 First, it has been observed in \cite{BD} 
that $$\varphi\coloneqq p_*\circ q^*\colon H^4(X,\ZZ)(1)\to H^2(F(X),\ZZ)$$ maps $h^2$ to the Pl\"ucker polarization $g\in H^2(F(X),\ZZ)$
and that for four-dimensional cubics the map induces an isomorphism
$$H^4(X,\ZZ)_{\rm pr}(1)\congpf H^2(F(X),\ZZ)_{\rm pr}$$
of Hodge structures of weight two satisfying $(\alpha)^2=-\frac{1}{6}\int_{F(X)} \varphi(\alpha)^2\cdot g^2$, cf.\ \cite[Sec.\ 3.4]{HuyCub} for  statements and further references.

Now, as $v(\lambda_1)^\perp\subset\widetilde H(X,\ZZ)\subset H^*(X,\QQ)$ is not concentrated in degree four, we need to extend the above to the full cohomology.
As was observed by Mukai, the natural map $p_*\circ q^*$ needs to be modified to enjoy certain functoriality properties. More precisely, it is known that
the following diagram commutes
\begin{equation}\label{eqn:MukaiCom}
\xymatrix{K_{\rm top}(X)\ar[d]_{v}\ar[r]^{p_*\circ q^*}&K_{\rm top}(F(X))\ar[d]^{v}\\
H^*(X,\QQ)\ar[r]&H^*(F(X),\QQ).}
\end{equation}
Here, the top and bottom rows are given by $E\mapsto p_*(q^*E)$ and $\alpha\mapsto p_*(q^*\alpha\cdot v(i_*\ko_\LL))$, respectively,
where $i\colon \LL\subset X\times F(X)$ is the inclusion, see \cite[Ch.\ 5]{HuyFM}.
The Mukai vector $i_*\ko_\LL$ can be computed by means of the Grothendieck--Riemann--Roch formula
as $$v(i_*\ko_\LL)=i_*({\rm td}(p))\cdot \left({{\rm td}(X)}^{-1}\boxtimes {\rm td}(F(X))\right)^{1/2}.$$
From here it is a straightforward computation to show that the commutativity of the diagram (\ref{eqn:MukaiCom})
implies the commutative diagram 
$$\xymatrix{K_{\rm top}(X)\ar[d]_{{\rm ch}}\ar[r]^{p_*\circ q^*}&K_{\rm top}(F(X))\ar[d]^{{\rm ch}}\\
H^*(X,\QQ)\ar[r]_-\varphi&H^*(F(X),\QQ),}$$
where now the bottom row is defined as $\varphi\colon\alpha\mapsto p_*(q^*\alpha\cdot{\rm td}(p))$.
In particular, for any class $\gamma\in K_{\rm top}(X)$ one finds ${\rm c}_1(p_*(q^*(\gamma)))=\{p_*(q^*{\rm ch}(\gamma)\cdot{\rm td}(p))\}_2$.

The restriction of ${\rm c}_1\circ p_*\circ q^*\colon K_{\rm top}(X)\to H^2(F(X),\ZZ)$ to the primitive part $A_2^\perp\subset K'_{\rm top}(X)$, i.e.\ the part mapping to $H^4(X,\QQ)_{\rm pr}$ under ${\rm ch}$ (or, equivalently, under the Mukai vector $v$), factors over the original isometry
$H^4(X,\ZZ)_{\rm pr}(1)\congpf H^2(F(X),\ZZ)_{\rm pr}$. As observed in Remark \ref{rem:l1perp}, $\lambda_1^\perp\subset  K'_{\rm top}(X)$
contains $A_2^\perp\oplus\ZZ\,(\lambda_1+2\lambda_2)$ as a sublattice of index three. A computation reveals  where the second summand
is mapped to, cf.\ \cite{AddCub}.

\begin{lem}[Addington]
Under the map ${\rm c}_1\circ p_*\circ q^*\colon K_{\rm top}(X)\to H^2(F(X),\ZZ)$ the class $\lambda_1+2\lambda_2$ is mapped to the Pl\"ucker polarization $g\in H^2(F(X),\ZZ)$. Furthermore, $(\lambda_1+2\lambda_2)^2=(g)^2=6$, where the second square  is with respect to the Beauville--Bogomolov form.\qed
\end{lem}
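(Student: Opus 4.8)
The plan is to evaluate the additive map $\Psi\coloneqq {\rm c}_1\circ p_*\circ q^*\colon K_{\rm top}(X)\to H^2(F(X),\ZZ)$ on a convenient integral representative of $\lambda_1+2\lambda_2$ and then to deal with the two squares separately. As classes in $K_{\rm top}(X)$, the embedding $A_2\hookrightarrow K'_{\rm top}(X)$ is given, according to (\ref{eqn:imlambda}), by $\lambda_1=[\ko_L(1)]-[\ko_X(1)]+4[\ko_X]$ and $\lambda_2=[\ko_L(2)]-[\ko_X(2)]+4[\ko_X(1)]-6[\ko_X]$, so that
$$\lambda_1+2\lambda_2=[\ko_L(1)]+2[\ko_L(2)]+7[\ko_X(1)]-2[\ko_X(2)]-8[\ko_X].$$
Hence $\Psi(\lambda_1+2\lambda_2)=\Psi[\ko_L(1)]+2\,\Psi[\ko_L(2)]+7\,\Psi[\ko_X(1)]-2\,\Psi[\ko_X(2)]-8\,\Psi[\ko_X]$, and it suffices to compute $\Psi$ on the five classes $[\ko_L(i)]$ and $[\ko_X(j)]$. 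Throughout I use the identity ${\rm c}_1(p_*(q^*\gamma))=\{p_*(q^*{\rm ch}(\gamma)\cdot{\rm td}(p))\}_2$ recorded above.

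First, $\Psi[\ko_L(i)]=0$ for $i=1,2$ by a degree count: $\ko_L(i)$ is supported on the line $L\subset X$, which has codimension three, so ${\rm ch}(\ko_L(i))\in H^{\ge6}(X,\QQ)$; as $q^*$ preserves cohomological degree and ${\rm td}(p)\in H^{\ge0}(\LL,\QQ)$, the product $q^*{\rm ch}(\ko_L(i))\cdot{\rm td}(p)$ lies in $H^{\ge6}(\LL,\QQ)$, and since $p\colon\LL\to F(X)$ is proper of relative dimension one, $p_*$ lowers degree by two, so $p_*(q^*{\rm ch}(\ko_L(i))\cdot{\rm td}(p))\in H^{\ge4}(F(X),\QQ)$ has vanishing degree-two component.

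Second, for $j=0,1,2$ one has $q^*\ko_X(j)\cong\ko_\LL(j)$, the $j$-th power of the relative hyperplane bundle under the identification $\LL\cong\PP(\ks^\vee)$, where $\ks$ is the restriction to $F(X)\subset G(2,6)$ of the tautological rank-two subbundle; this only uses that the second projection $\LL\to\PP^5$ embeds the fibres of $p$ linearly, so $q^*\ko_{\PP^5}(1)$ is the tautological quotient of $p^*\ks^\vee$ with no twist by a bundle pulled back from $F(X)$. Therefore $Rp_*\ko_\LL(j)\cong{\rm Sym}^j(\ks^\vee)$ (with no higher direct images in this range), whence $\Psi[\ko_X(j)]={\rm c}_1({\rm Sym}^j(\ks^\vee))=\binom{j+1}{2}\,{\rm c}_1(\ks^\vee)=\binom{j+1}{2}\,g$, since ${\rm c}_1(\ks^\vee)$ is precisely the Pl\"ucker class $g$. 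Thus $\Psi[\ko_X]=0$, $\Psi[\ko_X(1)]=g$, $\Psi[\ko_X(2)]=3g$, and assembling the pieces yields $\Psi(\lambda_1+2\lambda_2)=7g-2\cdot 3g=g$.

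For the squares, $(\lambda_1+2\lambda_2)^2=6$ is read off directly from the Gram matrix $\left(\begin{matrix}2&-1\\-1&2\end{matrix}\right)$ of $A_2$, and $(g)^2=6$ for the Beauville--Bogomolov form is the standard computation for the Fano variety of a cubic fourfold, cf.\ \cite{BD}. The one place that needs genuine care is the identification in the third paragraph: pinning down the correct projectivization of $\ks$, verifying that $q^*\ko_X(1)$ carries no auxiliary twist, and matching ${\rm c}_1(\ks^\vee)$ with $g$ on the nose rather than up to sign or multiple; everything else is a short degree count or a character computation. As a reassuring consistency check, the vanishing $\Psi[\ko_L(i)]=0$ is exactly what is compatible with $\Psi$ restricting to the nonzero (Beauville--Donagi) isometry on $A_2^\perp=H^4(X,\ZZ)_{\rm pr}(1)$: there ${\rm ch}$ of a class starts in $H^{\ge4}$ rather than $H^{\ge6}$, so the degree-two component of $p_*(q^*{\rm ch}(\gamma)\,{\rm td}(p))$ survives and reproduces that isometry.
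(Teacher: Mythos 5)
Your computation is correct, and since the paper states this lemma with no proof beyond a pointer to Addington's paper (``a computation reveals\dots, cf.\ [Ad16]''), your argument actually supplies the missing verification rather than duplicating one. The three ingredients all check out: the expansion $\lambda_1+2\lambda_2=[\ko_L(1)]+2[\ko_L(2)]+7[\ko_X(1)]-2[\ko_X(2)]-8[\ko_X]$ follows from the formulas $p[\ko_L(i)]$ recorded in the proof of Proposition \ref{prop:ATKtop}; the degree count killing the $[\ko_L(i)]$ contributions is valid because ${\rm ch}(\ko_L(i))$ starts in $H^6$ (visible from $u_i=\frac13h^3+\cdots$) and $p_*$ only drops the degree by two; and $Rp_*\ko_{\LL}(j)\cong{\rm Sym}^j(\ks^\vee)$ with ${\rm c}_1=\binom{j+1}{2}\,{\rm c}_1(\ks^\vee)=\binom{j+1}{2}\,g$ is the standard projective-bundle computation, with no higher direct images for $j=0,1,2$. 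The convention issue you flag in the third paragraph is indeed the only delicate point, and it is fine: one can cross-check $\Psi[\ko_X(1)]=g$ and $\Psi[\ko_X(2)]=3g$ purely cohomologically via $\{p_*(q^*e^{jh}\cdot{\rm td}(p))\}_2$, using ${\rm c}_1(T_p)=2\xi-p^*g$ and $p_*\xi=1$, $p_*\xi^2=g$, which reproduces $g$ and $3g$ and also confirms $p_*q^*h^2=g$, i.e.\ that ${\rm c}_1(\ks^\vee)$ is the same $g$ the paper normalizes $\varphi$ against. The tally $7g-6g=g$ and the two squares $(\lambda_1+2\lambda_2)^2=2-4+8=6=(g)^2$ are then immediate.
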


Therefore, there exists an isometric embedding of the sublattice
\begin{equation}\label{eqn:IndexthreeembedFano}
A_2^\perp \oplus\ZZ\,(\lambda_1+2\lambda_2)\,\hookrightarrow H^2(F(X),\ZZ),
\end{equation}
where $A_2^\perp \oplus\ZZ\,(\lambda_1+2\lambda_2)$ is a sublattice of $\lambda_1^\perp$ of index three and discriminant ${\rm disc}=18$.
On the other hand, as abstract lattices $H^2(F(X),\ZZ)\cong \lambda_1^\perp$. Using this, one then proves that (\ref{eqn:IndexthreeembedFano}) indeed extends to an isometry $\lambda_1^\perp\congpf H^2(F(X),\ZZ)$. Composition with
$\lambda_1^\perp\cong v(\lambda_1)^\perp$ yields the Hodge isometry $v(\lambda_1)^\perp\congpf H^2(F(X),\ZZ)$.
Here, the orthogonal complements $\lambda_1^\perp$ and $v(\lambda_1)^\perp$ are taken in $K_{\rm top}'(X)$
and $\widetilde H(X,\ZZ)$, respectively.

\subsection{}
In the sequel, we will think of $H^2(F(X),\ZZ)$ as a natural sub Hodge structure of $\widetilde H(X,\ZZ)$:
$$H^2(F(X),\ZZ)\subset \widetilde H(X,\ZZ),$$ orthogonal to the distinguished class $v(\lambda_1)\in \widetilde H^{1,1}(X,\ZZ)$. This should be thought
of as analogous to the inclusion $$H^2(S^{[2]},\ZZ)\subset \widetilde H(S,\ZZ),$$
which is orthogonal to $v(\ki_x)=(1,0,-1)\in (H^0\oplus H^4)(S,\ZZ)\subset \widetilde H^{1,1}(S,\ZZ)$. Note that both vectors, $v(\lambda_1)$ and $v(\ki_x)$, are of square two, which immediately leads to the following observation.

\begin{lem}
Let $X$ be a smooth cubic fourfold and $S$ a K3 surface. Then every Hodge isometry
$H^2(F(X),\ZZ)\congpf H^2(S^{[2]},\ZZ)$ extends to a Hodge isometry $\widetilde H(X,\ZZ)\congpf \widetilde H(S,\ZZ)$ mapping
$v(\lambda_1)$ to $v(\ki_x)$.\qed
\end{lem}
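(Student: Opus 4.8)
The plan is to extend $\phi$ over $\QQ$ first and then check integrality by a gluing computation, using that the orthogonal complement of a square-$2$ vector in a unimodular lattice has discriminant group $\ZZ/2$, so that all the relevant gluing data is forced. Since $v(\lambda_1)\in\widetilde H^{1,1}(X,\ZZ)$ and $(v(\lambda_1))^2=2\ne 0$, the rational Hodge structure splits orthogonally as $\widetilde H(X,\QQ)=\QQ\,v(\lambda_1)\oplus v(\lambda_1)^\perp_{\QQ}$, where $v(\lambda_1)^\perp=H^2(F(X),\ZZ)$ by the Beauville--Donagi--Addington isometry and Remark~\ref{rem:l1perp}; likewise $\widetilde H(S,\QQ)=\QQ\,v(\ki_x)\oplus v(\ki_x)^\perp_{\QQ}$ with $v(\ki_x)^\perp=H^2(S^{[2]},\ZZ)$. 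Hence the map $\psi$ sending $v(\lambda_1)\mapsto v(\ki_x)$ and equal to $\phi$ on $v(\lambda_1)^\perp$ is a Hodge isometry $\widetilde H(X,\QQ)\congpf\widetilde H(S,\QQ)$ with the required properties, and everything reduces to showing $\psi\bigl(\widetilde H(X,\ZZ)\bigr)=\widetilde H(S,\ZZ)$.

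For the integrality, note that $\widetilde H(X,\ZZ)$ is an even unimodular lattice containing $\ZZ\,v(\lambda_1)\oplus v(\lambda_1)^\perp$ with finite index. A discriminant computation, exactly as in Remark~\ref{rem:l1perp}, gives that this index is $2$, that $A_{\ZZ\,v(\lambda_1)}$ and $A_{v(\lambda_1)^\perp}$ are both cyclic of order two (with discriminant forms sending the generator to $1/2$ and to $-1/2$ respectively), and that the glue subgroup $\widetilde H(X,\ZZ)/(\ZZ\,v(\lambda_1)\oplus v(\lambda_1)^\perp)\subset A_{\ZZ\,v(\lambda_1)}\oplus A_{v(\lambda_1)^\perp}$ is the unique isotropic subgroup of order two, namely the diagonal, i.e.\ the graph of the isomorphism identifying the two copies of $\ZZ/2$. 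The same description holds on the $S$-side. Now $\psi$ induces on discriminant groups the identification $A_{\ZZ\,v(\lambda_1)}\cong\ZZ/2\cong A_{\ZZ\,v(\ki_x)}$ (both generators being half a square-$2$ vector) together with the isomorphism $A_{v(\lambda_1)^\perp}\cong A_{v(\ki_x)^\perp}$ induced by $\phi$; since $\Aut(\ZZ/2)$ is trivial, each of these is \emph{the} isomorphism $\ZZ/2\congpf\ZZ/2$, so $\psi$ carries the diagonal to the diagonal. Therefore $\psi$ maps the overlattice $\widetilde H(X,\ZZ)$ isomorphically onto $\widetilde H(S,\ZZ)$, and being a Hodge isometry over $\QQ$ it is the desired integral Hodge isometry.

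I do not expect a genuine obstacle here: the whole point is the observation that all the gluing data lives in groups of order two, so once $v(\lambda_1)$ and $v(\ki_x)$ have the same square nothing can obstruct the extension, and the only bookkeeping is the (standard) discriminant computation for $\lambda_1^\perp$. An alternative, softer argument avoids even this: by Eichler's criterion a primitive square-$2$ vector in the unimodular lattice $\widetilde\Lambda$ is unique up to $\OO(\widetilde\Lambda)$, so there is \emph{some} isometry $g\colon\widetilde H(X,\ZZ)\congpf\widetilde H(S,\ZZ)$ with $g(v(\lambda_1))=v(\ki_x)$; then $\phi\circ(g|_{v(\lambda_1)^\perp})^{-1}$ is an auto-isometry of $v(\ki_x)^\perp=H^2(S^{[2]},\ZZ)$, which extends to an auto-isometry of $\widetilde H(S,\ZZ)$ fixing $v(\ki_x)$ (again because $\Aut(\ZZ/2)=1$), and composing with $g$ yields the required map; that the composite respects the Hodge structures is immediate from the rational splitting recorded above.
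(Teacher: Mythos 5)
Your proof is correct and is exactly the argument the paper has in mind: the lemma is stated there with only the remark that both $v(\lambda_1)$ and $v(\ki_x)$ have square two, and your discriminant computation (index-two glue along the diagonal in $\ZZ/2\oplus\ZZ/2$, with $\Aut(\ZZ/2)$ trivial so the extension is forced) is precisely the standard bookkeeping that makes that remark into a proof. Both your main argument and the Eichler-criterion variant are sound; nothing to add.
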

The result should be compared to the observation made earlier that every Hodge isometry
$H^4(X,\ZZ)_{\rm pr}\congpf H^4(X',\ZZ)_{\rm pr}$ extends to $H^4(X,\ZZ)\congpf H^4(X',\ZZ)$ with $h_X^2\mapsto \pm h_{X'}^2$.
\smallskip

This enables one to prove the Fano analogue of Proposition \ref{prop:UUn}, see \cite{AddCub,HassComp,HuyCub}.
\begin{prop}[Addington, Hassett, Huybrechts]\label{prop:AddHasHuy} Assume $X$ is a smooth cubic fourfold.
\begin{enumerate}
\item[{\rm (i)}] There exist a K3 surface $S$ and a Hodge isometry 
\begin{equation}\label{eqn:Prop1}
H^2(S^{[2]},\ZZ)\cong H^2(F(X),\ZZ)
\end{equation}
if and only if there
exists an embedding $U\,\hookrightarrow \widetilde H^{1,1}(X,\ZZ)$ with $v(\lambda_1)$ contained in its image.
\item[{\rm (ii)}] There exist a K3 surface $S$ and a Hodge isometry 
\begin{equation}\label{eqn:Prop2}
H^2(M_S(v),\ZZ)\cong H^2(F(X),\ZZ)
\end{equation} for some
 smooth, projective, four-dimensional moduli space $M_S(v)$ of stable sheaves on $S$ if and only if there exists a K3 surface
 $S$ with $S\sim X$ if and only if
there exists an embedding $U\,\hookrightarrow \widetilde H^{1,1}(X,\ZZ)$.
\item[{\rm (iii)}] There exist a twisted K3 surface $(S,\alpha)$ and a Hodge isometry 
\begin{equation}\label{eqn:Prop3}
H^2(M_{S,\alpha}(v),\ZZ)\cong H^2(F(X),\ZZ)
\end{equation} for some
smooth, projective, four-dimensional moduli space $M_{S,\alpha}(v)$ of twisted stable sheaves on $S$ if and only if 
there exists a twisted K3 surface $(S,\alpha)$ with $(S,\alpha)\sim X$ if and only if
there exists an embedding $U(n)\,\hookrightarrow \widetilde H^{1,1}(X,\ZZ)$ for some $n\ne0$.
\end{enumerate}
\end{prop}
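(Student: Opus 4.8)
The plan is to reduce everything to two ingredients that are already available: the extension Lemma stated just above --- or rather its routine refinement, in which $v(\ki_x)$ is replaced by an \emph{arbitrary} primitive algebraic class $v$ of square two --- together with the structure theory of moduli spaces of (twisted) Gieseker-stable sheaves on a projective K3 surface. For the first ingredient one observes, exactly as in the proof of Proposition \ref{prop:HassettEichler}, that primitive vectors of square two in the unimodular Mukai lattice $\widetilde\Lambda$ form a single $\OO(\widetilde\Lambda)$-orbit, so that the pair consisting of such a vector and its orthogonal complement is unique up to isometry; hence any Hodge isometry between $v(\lambda_1)^\perp\subset\widetilde H(X,\ZZ)$ and $v^\perp\subset\widetilde H(S,\ZZ)$, with $v$ primitive algebraic and $(v)^2=2$, extends to a Hodge isometry $\widetilde H(X,\ZZ)\congpf\widetilde H(S,\ZZ)$ sending $v(\lambda_1)$ to $\pm v$. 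For the second ingredient I would invoke that, for a primitive (twisted) Mukai vector $v$ with $(v)^2\ge-2$ and a $v$-generic polarization, $M_S(v)$, resp.\ $M_{S,\alpha}(v)$, is a smooth projective hyperk\"ahler manifold of dimension $(v)^2+2$ deformation equivalent to a Hilbert scheme, carrying a canonical Hodge isometry $H^2(M_S(v),\ZZ)\cong v^\perp$, resp.\ $H^2(M_{S,\alpha}(v),\ZZ)\cong v^\perp$; for $(v)^2=2$ these are fourfolds of ${\rm K3}^{[2]}$-type and $S^{[2]}$ is the special case $v=v(\ki_x)$. See \cite{AddCub,HuyCub,HuyComp} for precise statements and references.

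For part (i), given a Hodge isometry $H^2(S^{[2]},\ZZ)\cong H^2(F(X),\ZZ)$, the extension Lemma upgrades it to $\widetilde H(X,\ZZ)\cong\widetilde H(S,\ZZ)$ with $v(\lambda_1)\mapsto v(\ki_x)\in(H^0\oplus H^4)(S,\ZZ)=U_4\subset\widetilde H^{1,1}(S,\ZZ)$, and pulling back $U_4$ produces a primitive hyperbolic plane inside $\widetilde H^{1,1}(X,\ZZ)$ through $v(\lambda_1)$. Conversely, starting from such a plane $U\ni v(\lambda_1)$, I would use the uniqueness above to assume, after a lattice isometry of $\widetilde H(X,\ZZ)$, that $U=U_4$ and $v(\lambda_1)=v(\ki_x)$; then the Hodge structure of $\widetilde H(X,\ZZ)$ restricts to a K3-type structure on $U^\perp\cong\Lambda$, which by surjectivity of the period map is $H^2(S,\ZZ)$ for some K3 surface $S$, and the resulting isometry extends (normalised so as to send $v(\lambda_1)$ to $v(\ki_x)$), restricting to $H^2(F(X),\ZZ)\cong H^2(S^{[2]},\ZZ)$ on orthogonal complements.

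For parts (ii) and (iii) the equivalences involving $S\sim X$, resp.\ $(S,\alpha)\sim X$, and the existence of $U\,\hookrightarrow\widetilde H^{1,1}(X,\ZZ)$, resp.\ $U(n)\,\hookrightarrow\widetilde H^{1,1}(X,\ZZ)$, are Proposition \ref{prop:UUn}; what remains is to link $S\sim X$ with the moduli-space Hodge isometry. If $S\sim X$, a Hodge isometry $\widetilde H(X,\ZZ)\cong\widetilde H(S,\ZZ)$ carries $v(\lambda_1)$ to a primitive algebraic class $w$ with $(w)^2=2$; I would then normalise $w$, using the Hodge isometries induced by shift, derived dualisation, line-bundle twists and the spherical twist along $\ko_S$, and passing to Fourier--Mukai partners as needed, into the Mukai vector $v=v(E_0)$ of a Gieseker-stable sheaf on a K3 surface $S'$, so that $M_{S'}(v)$ is defined and $H^2(M_{S'}(v),\ZZ)\cong v^\perp\cong w^\perp\cong v(\lambda_1)^\perp=H^2(F(X),\ZZ)$. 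Conversely, from a four-dimensional $M_S(v)$ with $H^2(M_S(v),\ZZ)\cong H^2(F(X),\ZZ)$ one reads off $(v)^2=2$ and $H^2(M_S(v),\ZZ)\cong v^\perp\subset\widetilde H(S,\ZZ)$ with $v$ primitive and algebraic, so the refined extension Lemma gives $\widetilde H(S,\ZZ)\cong\widetilde H(X,\ZZ)$, i.e.\ $S\sim X$, and Proposition \ref{prop:UUn} finishes. Part (iii) runs in parallel with a fixed B-field lift, twisted sheaves, the twisted Mukai lattice $\widetilde H(S,\alpha,\ZZ)$, and the surjectivity of the twisted period map already used in Proposition \ref{prop:UUn}.

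The hard part will be the normalisation step in (ii) and (iii): one must show that every primitive algebraic class of square two in $\widetilde H^{1,1}(S,\ZZ)$, resp.\ in $\widetilde H^{1,1}(S,\alpha,\ZZ)$, can be moved by a Hodge isometry --- at the cost of replacing the underlying (twisted) K3 surface by a (twisted) Fourier--Mukai partner --- to the Mukai vector of a genuinely Gieseker-stable (twisted) sheaf, so that the existence, smoothness and period description of the moduli space actually apply. Everything else is lattice bookkeeping resting on Eichler's criterion and on the surjectivity of the (twisted) period map.
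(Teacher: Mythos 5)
Your proposal is correct and follows essentially the same route as the paper: the extension lemma for primitive square-two vectors (the stated Lemma, refined via Eichler's criterion exactly as you do), the identification $H^2(M_{S,\alpha}(v),\ZZ)\cong v^\perp\subset\widetilde H(S,\alpha,\ZZ)$, and Proposition \ref{prop:UUn} together with the surjectivity of the (twisted) period map. The ``normalisation step'' you single out as the hard part is in fact standard: a primitive algebraic $v$ with $(v)^2=2$ cannot be of the form $(0,0,s)$, so after at most a sign change (which the extension lemma already permits, since it sends $v(\lambda_1)$ to $\pm v$) it is a positive Mukai vector and Yoshioka's nonemptiness and smoothness results for a $v$-generic polarization apply directly, with no need for shifts, dualisation, or passage to Fourier--Mukai partners.
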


\begin{proof}
Any Hodge isometry (\ref{eqn:Prop1}) extends to a Hodge isometry $\widetilde H(S,\ZZ)\cong H^2(F(X),\ZZ)$ with $(1,0,-1)\mapsto v(\lambda_1)$.
As $(1,0,-1)\in U\cong (H^0\oplus H^4)(S,\ZZ)\subset \widetilde H^{1,1}(S,\ZZ)$, this proves one direction in (i). For the other direction
use the arguments in the proof of Proposition \ref{prop:UUn} to show that there exists a K3 surface $S$ with $S\sim X$ such that the given
$U\,\hookrightarrow\widetilde H^{1,1}(X,\ZZ)$ corresponds to $(H^0\oplus H^4)(S,\ZZ)$.

For (ii) and (iii) recall that there exists a Hodge isometry $H^2(M_{S,\alpha}(v),\ZZ)\cong v^\perp\subset\widetilde H(S,\alpha,\ZZ)$, cf.\ 
\cite[Ch.\ 10]{HuyK3} for references in the untwisted case and \cite{HuyStel} for the twisted case. Then, if a Hodge isometry $\widetilde H(S,\alpha,\ZZ)\cong \widetilde H(X,\ZZ)$ is given, let $v\in \widetilde H^{1,1}(S,\alpha,\ZZ)$ be the vector
that is mapped to $v(\lambda_1)$. Then (\ref{eqn:Prop2}) and (\ref{eqn:Prop3}) hold. The remaining assertions follow from Proposition  \ref{prop:UUn}.
\end{proof}

This leads to the following analogue of Corollary \ref{cor:numchar}.

\begin{cor}
For a smooth cubic fourfold $X$ the condition {\rm (i)} (or {\rm (ii)} or {\rm (iii)}) is equivalent to
$X\in\kc_d$ for some $d$ satisfying $(\ast\!\ast\!\ast)$ (or $(\ast\ast)$ or $(\ast\ast')$, respectively).\vskip-0.5cm\qed
\end{cor}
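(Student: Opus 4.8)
The corollary translates the three existence statements of Proposition~\ref{prop:AddHasHuy} into the numerical language of the conditions $(\ast\!\ast\!\ast)$, $(\ast\ast)$, and $(\ast\ast')$. The plan is to combine two ingredients already available in the excerpt: first, Proposition~\ref{prop:AddHasHuy}, which reformulates conditions (i)--(iii) in terms of the existence of an embedding $U\hookrightarrow\widetilde H^{1,1}(X,\ZZ)$ (with $v(\lambda_1)$ in its image), an embedding $U\hookrightarrow\widetilde H^{1,1}(X,\ZZ)$, and an embedding $U(n)\hookrightarrow\widetilde H^{1,1}(X,\ZZ)$, respectively; and second, the dictionary between the period point of $X$ lying in $D_d$ (equivalently $[X]\in\kc_d$) and the existence of a primitive sublattice $L_d\hookrightarrow\widetilde H^{1,1}(X,\ZZ)$, together with the purely lattice-theoretic characterization of $(\ast)$, $(\ast\ast')$, $(\ast\ast)$, $(\ast\!\ast\!\ast)$ in terms of embeddings of $U$, $U(n)$ into $L_d$ given by the definitions in the second subsection and by Proposition~\ref{prop:UUn}.

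**Key steps.** I would proceed as follows. (1) Recall that $[X]\in\kc_d$ exactly when the period of $X$ lies in $D_d$, which by definition of $D_d$ happens iff there is a primitive embedding $L_d\hookrightarrow\widetilde H^{1,1}(X,\ZZ)$ extending the fixed $A_2\subset\widetilde H^{1,1}(X,\ZZ)$; this is precisely the argument used in the proof of Corollary~\ref{cor:numchar}(ii). (2) For the direction ``$X\in\kc_d$ with $d$ satisfying the condition $\Rightarrow$ (i)/(ii)/(iii)'': if $d$ satisfies $(\ast\ast')$ then by definition there is an embedding $U(n)\hookrightarrow L_d\hookrightarrow\widetilde H^{1,1}(X,\ZZ)$, giving (iii) via Proposition~\ref{prop:AddHasHuy}(iii); if $d$ satisfies $(\ast\ast)$ there is a primitive $U\hookrightarrow L_d$, giving (ii); and if $d$ satisfies $(\ast\!\ast\!\ast)$ there is a primitive $U\hookrightarrow L_d$ with $\lambda_1\in U$, and since $v(\lambda_1)$ is the image of $\lambda_1$ under the chosen isometry $A_2^\perp\oplus A_2\subset\widetilde\Lambda\cong\widetilde H(X,\ZZ)$, this $U$ has $v(\lambda_1)$ in its image, giving (i). (3) For the converse: given (iii), Proposition~\ref{prop:AddHasHuy} produces $U(n)\hookrightarrow\widetilde H^{1,1}(X,\ZZ)$; since $A_2\hookrightarrow\widetilde H^{1,1}(X,\ZZ)$ is a positive plane, $U(n)$ is contained in a primitive rank-three sublattice $N\subset\widetilde H^{1,1}(X,\ZZ)$ containing $A_2$, and such an $N$ is necessarily of the form $L_d$ for the unique $d$ with ${\rm disc}(L_d)=|{\rm disc}(N)|$ (this is exactly the final paragraph of the proof of Corollary~\ref{cor:numchar}), so $[X]\in\kc_d$ and $d$ satisfies $(\ast\ast')$ because $U(n)\hookrightarrow L_d$. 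The cases (ii) and (i) run identically, tracking primitivity of $U$ and, for (i), the extra condition $v(\lambda_1)\in U$ which translates back to $\lambda_1\in U\subset L_d$ after identifying $\widetilde H(X,\ZZ)$ with $\widetilde\Lambda$.

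**Main obstacle.** The bookkeeping in the converse direction is the delicate point: one must check that the rank-three overlattice $N\supset U(n)$ (respectively $\supset U$) can be arranged to \emph{contain} the fixed copy of $A_2$ and to be \emph{primitive} in $\widetilde H^{1,1}(X,\ZZ)$, so that it genuinely is one of the $L_d$'s rather than merely abstractly isometric to such a lattice, and that the discriminant then pins down $d$ uniquely. This uses Eichler-type uniqueness for $A_2\hookrightarrow\widetilde\Lambda$ (as in Proposition~\ref{prop:HassettEichler} and the discussion around the embedding $A_2\hookrightarrow\widetilde\Lambda$) together with Lemma~\ref{lem:UA2AT}. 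For part (i) there is the additional subtlety that $v(\lambda_1)$ must lie in $U$: here one invokes the Lemma preceding Proposition~\ref{prop:AddHasHuy}, which guarantees that any Hodge isometry respecting $H^2(F(X),\ZZ)$ can be taken to send $v(\ki_x)$ to $v(\lambda_1)$, so the hyperbolic plane through $v(\lambda_1)$ may be identified with $(H^0\oplus H^4)(S,\ZZ)$ and hence contains the image of $\lambda_1$. Once these identifications are made, the equivalence is immediate; the corollary is then a formal consequence of Proposition~\ref{prop:AddHasHuy} and the $L_d$-dictionary, and indeed the excerpt signals as much by omitting the proof (``$\qed$'') and referring the reader to the proof of Corollary~\ref{cor:numchar}.
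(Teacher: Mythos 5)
Your proposal is correct and follows exactly the route the paper intends: the corollary is stated with no proof because it is the formal combination of Proposition~\ref{prop:AddHasHuy} with the dictionary ``$[X]\in\kc_d\Leftrightarrow L_d\hookrightarrow\widetilde H^{1,1}(X,\ZZ)$'' already used in the proof of Corollary~\ref{cor:numchar}, and your three cases track precisely the definitions of $(\ast\!\ast\!\ast)$, $(\ast\ast)$, $(\ast\ast')$ as conditions on embeddings of $U$ (with $\lambda_1$), $U$, and $U(n)$ into $L_d$. Your attention to the converse direction --- saturating $U+A_2$ (respectively $U(n)+A_2$) to a primitive rank-three sublattice that is necessarily some $L_d$, and noting that embeddings of $U$ are automatically split hence primitive --- supplies exactly the bookkeeping the paper leaves implicit.
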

So, at one glance:
$$\begin{array}{rclcc}
H^2(S^{[2]},\ZZ)&\cong& H^2(F(X),\ZZ)&\Leftrightarrow& (\ast\!\ast\!\ast),\\
H^2(M_S(v),\ZZ)&\cong &H^2(F(X),\ZZ)&\Leftrightarrow  &(\ast\ast),\\
H^2(M_{S,\alpha}(v),\ZZ)&\cong &H^2(F(X),\ZZ)&\Leftrightarrow&  (\ast\ast').
\end{array}$$

\subsection{}\label{sec:GTHK4}
The purely Hodge and lattice theoretic considerations above can now be combined with the global Torelli theorem for hyperk\"ahler fourfolds
due to Verbitsky \cite{Verb} and Markman \cite{Mark}, see also \cite{HuyVerb}:
Two hyperk\"ahler fourfolds $Y$ and $Y'$ of ${\rm K3}^{[2]}$-type are birational if
and only if there exists a Hodge isometry
$H^2(Y,\ZZ)\cong H^2(Y',\ZZ)$:
$$Y\sim Y'\Leftrightarrow H^2(Y,\ZZ)\cong H^2(Y',\ZZ).$$
This then implies the following reformulation of the above results:
$$S^{[2]}\sim F(X)\Leftrightarrow (\ast\!\ast\!\ast), ~~ÊM_S(v)\sim F(X)\Leftrightarrow  (\ast\ast)$$
$$\text{\rm and~~} M_{S,\alpha}(v)\sim F(X)\Leftrightarrow  (\ast\ast').$$
More precisely, one has:

\begin{cor}\label{cor:FanoHilbModuli}
Let $X$ be a smooth cubic fourfold and $F(X)$ its Fano variety of lines. 
\begin{enumerate}
\item[{\rm (i)}] There exists a K3 surface $S$ such that $F(X)$ is birational to $S^{[2]}$ if and only if $X\in \kc_d$ for some $d$ satisfying $(\ast\!\ast\!\ast)$.
\item[{\rm (ii)}] There exists a K3 surface $S$ such that $F(X)$ is birational to a certain smooth, projective moduli
space $M_S(v)$ of stable sheaves on $S$ if and only if $X\in \kc_d$ for some $d$ satisfying $(\ast\ast)$.
\item[{\rm (iii)}] There exists a twisted K3 surface $(S,\alpha)$ such that $F(X)$ is birational to a certain smooth, projective moduli
space $M_{S,\alpha}(v)$ of twisted stable sheaves on $S$ if and only if $X\in \kc_d$ for some $d$ satisfying $(\ast\ast')$.\qed
\end{enumerate}
\end{cor}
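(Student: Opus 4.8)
The three assertions have the same structure; I would prove (i) in detail and then indicate the single extra ingredient needed for (ii) and (iii). The whole argument amounts to chaining together three results already at hand: the global Torelli theorem for hyperk\"ahler fourfolds of ${\rm K3}^{[2]}$-type recalled above (Verbitsky \cite{Verb}, Markman \cite{Mark}), Proposition \ref{prop:AddHasHuy}, and the Corollary following Proposition \ref{prop:AddHasHuy}.

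For (i): by Beauville--Donagi \cite{BD} the Fano variety $F(X)$ is a hyperk\"ahler fourfold of ${\rm K3}^{[2]}$-type, and so is $S^{[2]}$ for every K3 surface $S$. Hence the global Torelli theorem applies to this pair and tells us that $F(X)$ is birational to $S^{[2]}$ for some K3 surface $S$ if and only if there is a Hodge isometry $H^2(S^{[2]},\ZZ)\cong H^2(F(X),\ZZ)$ for some $S$, the quadratic forms being the Beauville--Bogomolov forms. By Proposition \ref{prop:AddHasHuy}(i) the latter is equivalent to the existence of an isometric embedding $U\,\hookrightarrow \widetilde H^{1,1}(X,\ZZ)$ whose image contains $v(\lambda_1)$, and by the Corollary following Proposition \ref{prop:AddHasHuy} this condition is in turn equivalent to $X\in\kc_d$ for some $d$ satisfying $(\ast\!\ast\!\ast)$. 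Concatenating the three equivalences gives (i).

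For (ii) and (iii) the same chain works, the only new point being that the moduli spaces $M_S(v)$, respectively $M_{S,\alpha}(v)$, appearing in Proposition \ref{prop:AddHasHuy}(ii)--(iii) must themselves be smooth, projective hyperk\"ahler fourfolds of ${\rm K3}^{[2]}$-type, so that the global Torelli theorem may be applied to them. This is supplied by the structure theory of moduli spaces of (twisted) sheaves on K3 surfaces: the Mukai vector $v$ produced in the proof of Proposition \ref{prop:AddHasHuy} (the preimage of $v(\lambda_1)$ under a Hodge isometry) satisfies $(v)^2=(v(\lambda_1))^2=2$ and is primitive, so that for a polarization generic with respect to $v$ the moduli space is a smooth projective hyperk\"ahler manifold of dimension $(v)^2+2=4$, deformation equivalent to $S^{[2]}$, and carries a Hodge isometry $H^2(M_S(v),\ZZ)\cong v^\perp\subset\widetilde H(S,\ZZ)$ (and likewise in the twisted case); see \cite[Ch.\ 10]{HuyK3}, and \cite{HuyStel} for the twisted version. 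Granting this, running the same chain --- global Torelli, then Proposition \ref{prop:AddHasHuy}(ii) resp.\ (iii), then the Corollary --- proves (ii) with $(\ast\ast)$ and (iii) with $(\ast\ast')$ in place of $(\ast\!\ast\!\ast)$.

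Thus, up to assembling these known results, nothing deep is involved. If one must name the main obstacle, it is the verification just mentioned: that every moduli space occurring in (ii) and (iii) satisfies the hypotheses of the Verbitsky--Markman theorem, i.e.\ is smooth, projective and of ${\rm K3}^{[2]}$-deformation type. A secondary point to keep in mind is that Markman's precise formulation requires the Hodge isometry to be a parallel-transport operator; for ${\rm K3}^{[2]}$-type fourfolds this causes no real trouble and has been absorbed into the clean statement recalled above, but it is the kind of thing one should check rather than any further lattice computation.
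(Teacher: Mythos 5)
Your proposal is correct and follows exactly the route the paper intends: the corollary is stated with a \qed precisely because it is the concatenation of the Verbitsky--Markman global Torelli theorem for ${\rm K3}^{[2]}$-type fourfolds with Proposition \ref{prop:AddHasHuy} and the corollary following it. Your two supplementary remarks (that the moduli spaces $M_S(v)$, $M_{S,\alpha}(v)$ are indeed smooth projective hyperk\"ahler fourfolds of ${\rm K3}^{[2]}$-type, and that the parallel-transport condition in Markman's formulation is harmless here) are exactly the points the paper leaves implicit.
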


\begin{remark}
For $d\equiv0\, (6)$ and very general $(S,L)\in M_d$, i.e.\ $\Pic(S)\cong \ZZ\, L$, there exists exactly one other polarized
K3 surface $(S',L')\in M_d$ with $\Phi_\varepsilon [(S,L)]=\Phi_\varepsilon[(S',L')]\eqqcolon [X]$. In particular, the Fano variety  $F(X)$ of lines
in the corresponding cubic fourfold $X$ is a natural four-dimensional hyperk\"ahler manifold associated with
$(S,L)$ and $(S',L')$. Other hyperk\"ahler manifolds that come naturally with $S$ and $S'$ would be $S^{[2]}$ and $S'^{[2]}$. From Corollary
\ref{cor:FanoHilbModuli} we know that for $d$ not satisfying $(\ast\!\ast\!\ast)$ the Hilbert scheme $S^{[2]}$ and the Fano variety $F(X)$ are not isomorphic. It was recently shown in \cite{Brakkee} that also $S^{[2]}$ and $S'^{[2]}$ need not be isomorphic (nor birational).
More precisely, they are isomorphc if and only if the Pell equation $3p^2-(d/6)q^2=-1$ has an integral solution.
\end{remark} 

\section{The Hodge theory of Kuznetsov's category}

In this short last section we touch upon the Hodge theoretic aspects of Kuznetsov's
triangulated category $\ka_X$ naturally associated with every smooth cubic fourfold $X\subset\PP^5$.
For the more categorical aspects we refer to the original \cite{Kuz1,Kuz2} or the lecture notes in this volume \cite{MacSt}.
The Hodge theoretic investigation of $\ka_X$ was initiated by Addington and Thomas \cite{AT}, the algebraic part of it played a crucial role
already in  \cite{Kuz2}.
 
\subsection{} We consider the bounded derived category $\Db(X)=\Db({\rm Coh}(X))$ of the abelian category ${\rm Coh}(X)$
of coherent sheaves on $X$. The three line bundles $\ko_X,\ko_X(1),\ko_X(2)\in \Db(X)$ form an exceptional
collection, i.e. $\Hom(\ko_X(i),\ko(j)[\ast])=0$ for $i>j$ and $\CC[0]$  for $i=j$.
According to a result of Bondal and Orlov \cite{BoOr:ICM}, the derived category $\Db(X)$ determines $X$ uniquely.
More precisely, if there exists an exact, linear equivalence $\Db(X)\cong \Db(X')$ for two smooth cubic fourfolds
$X,X'\subset\PP^5$, then $X\cong X'$. This could be called a categorical global Torelli theorem, although the existence
of such an equivalence is almost as hard as writing down an explicit isomorphism between them. However, it turns out that
$\Db(X)$ contains a natural subcategory which is a much subtler invariant.

\begin{definition}
For a smooth cubic fourfold $X\subset\PP^5$, we denote by
$$\ka_X\coloneqq\langle\ko_X,\ko_X(1),\ko_X(2)\rangle^\perp\subset\Db(X)$$
the full triangulated subcategory of all objects $F\in \Db(X)$ 
right orthogonal to $\ko_X,\ko_X(1)$, and $\ko_X(2)$, i.e.\ such that $\Hom(\ko_X(i),F[\ast])=0$ for $i=0,1,2$.
\end{definition}

\begin{thm}[Kuznetsov]
The triangulated category $\ka_X$ is a Calabi--Yau  category of dimension two, i.e.\ $F\mapsto F[2]$ defines a Serre functor.\qed
\end{thm}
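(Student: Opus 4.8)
We first record that, $X\subset\PP^5$ being a smooth cubic, adjunction gives $\omega_X\cong\ko_X(-3)$, so the Serre functor of the ambient category is $S_{\Db(X)}(\mathcal F)\cong\mathcal F(-3)[4]$. The plan then rests on the standard fact that an admissible subcategory of a triangulated category carrying a Serre functor inherits one, together with its explicit mutation description (cf.\ \cite{Kuz2}): for the semiorthogonal decomposition $\Db(X)=\langle\ka_X,\ko_X,\ko_X(1),\ko_X(2)\rangle$ the left adjoint of the inclusion $\ka_X\hookrightarrow\Db(X)$ is the left mutation $\mathbb L_{\ko_X}\mathbb L_{\ko_X(1)}\mathbb L_{\ko_X(2)}$ through the exceptional collection, and hence, on $\ka_X$, one has $S_{\ka_X}^{-1}\cong\mathbb L_{\ko_X}\mathbb L_{\ko_X(1)}\mathbb L_{\ko_X(2)}\bigl((-)\otimes\ko_X(3)\bigr)[-4]$.

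Next I would repackage this using the \emph{rotation functor} $R\coloneqq\mathbb L_{\ko_X}\circ(-\otimes\ko_X(1))$. For $F\in\ka_X$ one has $F(1)\in\langle\ko_X(1),\ko_X(2),\ko_X(3)\rangle^\perp$, and a short computation (using $H^\bullet(X,\ko_X(-1))=H^\bullet(X,\ko_X(-2))=0$, i.e.\ Kodaira vanishing for $\omega_X\otimes\ko_X(2)$ and $\omega_X\otimes\ko_X(1)$) shows that $\mathbb L_{\ko_X}(F(1))$ again lies in $\langle\ko_X,\ko_X(1),\ko_X(2)\rangle^\perp=\ka_X$; thus $R$ is a well-defined autoequivalence of $\ka_X$. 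By twist-equivariance of mutation, $\mathbb L_{E\otimes L}(G\otimes L)\cong\mathbb L_E(G)\otimes L$, one gets $\mathbb L_{\ko_X}\mathbb L_{\ko_X(1)}\mathbb L_{\ko_X(2)}\bigl((-)\otimes\ko_X(3)\bigr)\cong R^3$, so that $S_{\ka_X}^{-1}\cong R^3\circ[-4]$, i.e.\ $S_{\ka_X}\cong R^{-3}\circ[4]$. It therefore suffices to establish the identity $R^3\cong[2]$.

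This last identity is the real content of the theorem, and the main obstacle. I would prove it by computing explicitly the effect of the three successive mutations on twists of objects of $\ka_X$ — equivalently, by determining $\mathbb L_{\ko_X}\mathbb L_{\ko_X(1)}\mathbb L_{\ko_X(2)}(\ko_X(3))$ up to shift — using the Koszul resolution $0\to\ko_{\PP^5}(-3)\xrightarrow{f}\ko_{\PP^5}\to\ko_X\to0$ of the defining equation together with the full exceptional collection $\Db(\PP^5)=\langle\ko_{\PP^5},\dots,\ko_{\PP^5}(5)\rangle$ (equivalently, the Beilinson resolution of the diagonal on $\PP^5$); alternatively one may phrase the computation through the spherical structure of $i_*\colon\Db(X)\to\Db(\PP^5)$ and its cotwist. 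The outcome is that the cone operations entering the three mutations produce a net homological shift by $[2]$. This is precisely the cubic-fourfold instance of Kuznetsov's general computation identifying $\ka_X$, for a smooth degree-$d$ hypersurface $X\subset\PP^n$, as a fractional Calabi--Yau category of dimension $(n+1)(d-2)/d$, which for $(n,d)=(5,3)$ equals $2\in\ZZ$; I would either carry out the mutation bookkeeping directly or simply invoke that result.

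Granting $R^3\cong[2]$ we conclude $S_{\ka_X}\cong R^{-3}[4]\cong[-2][4]=[2]$, so $F\mapsto F[2]$ is a Serre functor on $\ka_X$; that is, $\ka_X$ is a Calabi--Yau category of dimension two. All steps apart from the identity $R^3\cong[2]$ are formal consequences of the semiorthogonal decomposition and of Serre duality on $X$; the genuine work — and the hard part — is the explicit mutation computation hidden in $R^3\cong[2]$.
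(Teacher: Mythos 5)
The paper does not actually prove this statement: it is recorded with a \qed and attributed to Kuznetsov, so your proposal is being measured against a citation rather than an argument. With that caveat, your outline is a correct reconstruction of Kuznetsov's proof, and every step you do carry out in detail checks out: $\omega_X\cong\ko_X(-3)$, hence $S_{\Db(X)}=(-)\otimes\ko_X(-3)[4]$; the projection $\mathbb{L}_{\ko_X}\mathbb{L}_{\ko_X(1)}\mathbb{L}_{\ko_X(2)}$ is indeed left adjoint to the inclusion $\ka_X\subset\Db(X)$, which gives the mutation formula for $S_{\ka_X}^{-1}$; the verification that the rotation functor $R$ preserves $\ka_X$ via $H^\bullet(X,\ko_X(-1))=H^\bullet(X,\ko_X(-2))=0$ is right; and the twist-equivariance computation identifying $\mathbb{L}_{\ko_X}\mathbb{L}_{\ko_X(1)}\mathbb{L}_{\ko_X(2)}((-)\otimes\ko_X(3))$ with $R^3$ on $\ka_X$ is correct. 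You have therefore reduced the theorem, by purely formal manipulations, to the single identity $R^3\cong[2]$, and you are honest that this is where all the content lies. As written, that identity is not proved but deferred to either a Koszul/Beilinson mutation computation or to Kuznetsov's general fractional Calabi--Yau theorem for hypersurfaces (CY dimension $(n+1)(d-2)/d$, which for $(n,d)=(5,3)$ gives $2$); invoking that result is legitimate and is exactly what the paper itself implicitly does, but if you intend the proof to be self-contained you must still supply this computation -- the cleanest modern route is via the spherical structure of $i_*\colon\Db(X)\to\Db(\PP^5)$ and its cotwist, as you suggest. In short: correct, more detailed than the paper, with one clearly flagged external input.
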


In other words, for all $E,F\in\ka_X$ there exist functorial isomorphisms $$\Hom(E,F)\cong\Hom(F,E[2])^\ast.$$ Other examples
of such categories are provided by $\Db(S)$ and $\Db(S,\alpha)$ associated with K3 surfaces $S$ and twisted K3 surfaces $(S,\alpha)$.
A natural question in this context is now to determine  when the Kuznetsov category
$\ka_X$ associated with a cubic fourfold is equivalent to the derived category $\Db(S)$ or $\Db(S,\alpha)$ for some
(twisted) K3 surface.

\subsection{} The goal of \cite{AT} was to compare Hassett's condition $(\ast\ast)$ with the condition
$\ka_X\cong\Db(S)$. Building upon \cite{AT}, the twisted version was later dealt with in \cite{HuyComp}.

\begin{thm}[Addington--Thomas, Huybrechts]
Let $X$ be a smooth cubic fourfold and $(S,\alpha)$ a twisted K3 surface.
\begin{enumerate}[(ii)]
\item[{\rm (i)}] Any exact, linear equivalence $\ka_X\cong \Db(S)$ induces a Hodge isometry $\widetilde H(X,\ZZ)\cong \widetilde H(S,\ZZ)$. In particular, $X$ is contained in $\kc_d$ with $d$ satisfying $(\ast\ast)$.
\item[{\rm (ii)}] Any exact, linear equivalence $\ka_X\cong\Db(S,\alpha)$ induces a Hodge isometry   $\widetilde H(X,\ZZ)\cong \widetilde H(S,\alpha,\ZZ)$. In particular, $X$ is contained in $\kc_d$ with $d$ satisfying $(\ast\ast')$.
\end{enumerate}
\end{thm}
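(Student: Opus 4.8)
The plan is to promote the categorical equivalence to a statement on (topological) K-theory and then recognize the resulting lattice isometry as a Hodge isometry; the guiding point is that $\widetilde H(X,\ZZ)=K'_{\rm top}(X)$ by Proposition~\ref{prop:ATKtop}, while for a K3 surface the topological K-theory $K_{\rm top}(S)$ is, via the Mukai vector, exactly the Mukai lattice $\widetilde H(S,\ZZ)$.

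First I would pass to a Fourier--Mukai kernel. Since $\ka_X$ is an admissible subcategory of $\Db(X)$ it carries an essentially unique enhancement, so by the representability theorems of Lunts--Orlov and Canonaco--Stellari any exact, $\CC$-linear equivalence $\Phi\colon\Db(S)\congpf\ka_X\subset\Db(X)$ is isomorphic to $\Phi_\kp$ for some kernel $\kp\in\Db(S\times X)$ (in the twisted case one runs the same argument with $\alpha$-twisted sheaves on $S$, following \cite{HuyComp}). Such a kernel induces a map $\Phi_\kp^K\colon K_{\rm top}(S)\to K_{\rm top}(X)$ and, via the Mukai vector $v=\ch\cdot\sqrt{\td}$, a compatible cohomological correspondence $\Phi_{v(\kp)}^H\colon H^*(S,\QQ)\to H^*(X,\QQ)$, intertwined by $v$. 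Because $\Phi$ takes values in $\ka_X$, the map $\Phi_\kp^K$ factors through $K'_{\rm top}(X)$, and since $\Phi$ is an equivalence (with inverse again of Fourier--Mukai type) $\Phi_\kp^K$ is an isomorphism of free abelian groups $\widetilde H(S,\ZZ)=K_{\rm top}(S)\congpf K'_{\rm top}(X)=\widetilde H(X,\ZZ)$; integrality is automatic, as $K_{\rm top}$ is torsion free.

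Next I would check that $\Phi_\kp^K$ is an isometry for the Mukai pairings and respects the Hodge structures. On the cubic side the Mukai pairing equals $-\chi$, by \eqref{eqn:chiPoincub} extended to $K_{\rm top}(X)$ through the Atiyah--Hirzebruch spectral sequence, and likewise on the K3 side; since $\chi(E,F)=\chi(\Phi E,\Phi F)$ for any exact equivalence, the pairings match. (That these pairings are symmetric on the relevant lattices, Lemma~\ref{lem:symoncompl} and Proposition~\ref{prop:ATKtop}, is here reflected by $\ka_X$ being a Calabi--Yau category of dimension two, so that $\chi(E,F)=\chi(F,E[2])=\chi(F,E)$.) It then remains to see that $\Phi_\kp^K$ carries $\widetilde H^{2,0}(S)=H^{2,0}(S)$ onto $\widetilde H^{2,0}(X)=v^{-1}(H^{3,1}(X))$. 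This is where the real work sits: one uses that $v(\kp)\in H^{2*}(S\times X,\QQ)$ is an algebraic class, so that $\Phi_{v(\kp)}^H$ is a morphism of rational Hodge structures, and that with the Mukai normalization built into $v$ this becomes a morphism of the \emph{weight-two} Hodge structures $\widetilde H(S,\QQ)\to\widetilde H(X,\QQ)$; being in addition an isomorphism, it must send the one-dimensional top filtration step $\widetilde H^{2,0}(S)$ isomorphically onto $\widetilde H^{2,0}(X)$. A clean way to pin down the normalization step is to route the comparison through Hochschild homology: a Fourier--Mukai equivalence induces an isomorphism $HH_*(S)\cong HH_*(\ka_X)$ compatible with the cohomological action, and by Kuznetsov's computation $HH_*(\ka_X)$ has the shape of the Hochschild homology of a K3 surface, its distinguished summand $HH_2$ corresponding to $v^{-1}(H^{3,1}(X))=\widetilde H^{2,0}(X)$ on the cubic side and to $H^{2,0}(S)$ on the K3 side. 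I expect this Hodge-theoretic identification of the $(2,0)$-parts, together with keeping the degree bookkeeping of $\widetilde H$ (which mixes $H^0$, $H^2$, $H^4$) under control, to be the main obstacle; everything else is formal. In the twisted case the identical argument applies with the twisted Mukai vector and the $B$-field-shifted $(2,0)$-part $\CC(\sigma+\sigma\wedge B)$ of $\widetilde H(S,\alpha,\ZZ)$.

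Finally, the stated consequences are immediate: a Hodge isometry $\widetilde H(X,\ZZ)\cong\widetilde H(S,\ZZ)$ says precisely that $S\sim X$ in the sense of Definition~\ref{def:assoK3C}, hence there exists a polarized K3 surface $(S',L')$ of some degree $d$ with $(S',L')\sim X$, and Corollary~\ref{cor:numchar}(i) then forces $X\in\kc_d$ for a $d$ satisfying $(\ast\ast)$; in the twisted case $(S,\alpha)\sim X$ and Corollary~\ref{cor:numchar}(ii) gives $X\in\kc_d$ for a $d$ satisfying $(\ast\ast')$.
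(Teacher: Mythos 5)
The paper does not actually prove this theorem: it is stated with attribution and the reader is referred to \cite{AT} for (i) and \cite{HuyComp} for (ii). Your outline is, however, a faithful reconstruction of the strategy of those references: realize the equivalence by a Fourier--Mukai kernel, let the kernel act on topological K-theory to get an isomorphism $K_{\rm top}(S)\cong K'_{\rm top}(X)=\widetilde H(X,\ZZ)$, match the pairings via $(~.~)=-\chi$ and invariance of $\chi$ under equivalences, and use algebraicity of $v(\kp)$ to see that Hodge types are preserved. Two remarks on where your sketch is thinner than it reads. First, surjectivity of $\Phi_\kp^K$ onto $K'_{\rm top}(X)$ (rather than mere containment of the image) is not automatic from ``the inverse is also FM''; the clean argument is that Kuznetsov's projection functor $\Db(X)\to\ka_X$ is itself of Fourier--Mukai type and factors through $\Phi$, and on topological K-theory the projection $p\colon K_{\rm top}(X)\twoheadrightarrow K'_{\rm top}(X)$ is integral and surjective because $(w_i)^2=1$, as in the proof of Proposition~\ref{prop:ATKtop}. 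Second, the Hochschild-homology detour for identifying the $(2,0)$-parts is unnecessary (and would itself require an HKR-type compatibility): since $v(\kp)$ is algebraic, each Künneth component shifts Hodge bidegrees by a fixed $(r,r)$, so it sends $H^{2,0}(S)$ into the sum of all $H^{p,q}(X)$ with $p-q=2$; in $H^{\rm even}(X,\CC)$ the only such piece is $H^{3,1}(X)$, which is exactly the statement $\widetilde H^{2,0}(S)\mapsto v^{-1}(H^{3,1}(X))=\widetilde H^{2,0}(X)$. The deduction of the ``in particular'' clauses from Proposition~\ref{prop:UUn} and Corollary~\ref{cor:numchar} is correct as you state it. In the twisted case be aware that the twisted Mukai vector and the $B$-field lift involve genuine choices that \cite{HuyComp} has to control, so ``the identical argument applies'' is an honest simplification rather than a complete reduction.
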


In fact, it is also known that for very general $X\in \kc_d$ with $d$ satisfying $(\ast\ast)$ or $(\ast\ast')$, respectively, the
converse in (i) and (ii) hold true. The proof, however, requires a fair amount of deformation theory for Fourier--Mukai kernels developed in
\cite{HMS,Toda,AT,HuyComp}. For non-special cubic fourfolds one has the following result.

\begin{prop}[Huybrechts]
Let $X$ and $X'$ be smooth cubic fourfolds. Then any Fourier--Mukai equivalence
$\ka_X\cong\ka_{X'}$ induces a Hodge isometry $\widetilde H(X,\ZZ)\cong\widetilde H(X',\ZZ)$. 
The converse holds for all non-special $X$ and for general special ones.
\end{prop}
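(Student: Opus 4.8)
The proof naturally divides into the direct implication---which is $K$-theoretic bookkeeping once the equivalence is realized by a kernel---and the converse, where the non-special case is an immediate consequence of the Torelli theorem and the general special case rests on the deformation theory of Fourier--Mukai kernels. For the direct implication, let $\Phi\colon\ka_X\congpf\ka_{X'}$ be a Fourier--Mukai equivalence with kernel $\kp\in\Db(X\times X')$. The first step is to promote $\Phi$ to a Fourier--Mukai functor $\widetilde\Phi\colon\Db(X)\to\Db(X')$ by precomposing with the projection $\Db(X)\twoheadrightarrow\ka_X$, which is itself of Fourier--Mukai type \cite{Kuz2}, and postcomposing with the inclusion $\ka_{X'}\hookrightarrow\Db(X')$; let $\kq$ denote the resulting kernel. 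Since the projection annihilates $\ko_X,\ko_X(1),\ko_X(2)$ while $\widetilde\Phi$ takes values in $\ka_{X'}$, the induced integral homomorphism $[\widetilde\Phi]\colon K_{\rm top}(X)\to K_{\rm top}(X')$, $\alpha\mapsto q_*(p^*\alpha\cdot[\kq])$ with $p,q$ the two projections from $X\times X'$, vanishes on $\langle[\ko_X],[\ko_X(1)],[\ko_X(2)]\rangle$ and has image inside $K'_{\rm top}(X')$; here one uses that the relevant orthogonal decompositions of $K_{\rm top}$ are defined over $\ZZ$ because $(w_i)^2=1$, cf.\ the proof of Proposition \ref{prop:ATKtop}. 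Consequently $[\widetilde\Phi]$ restricts to a homomorphism $\psi\colon\widetilde H(X,\ZZ)=K'_{\rm top}(X)\to K'_{\rm top}(X')=\widetilde H(X',\ZZ)$, which unwinding the definitions is exactly the map induced by $\Phi$ on $K$-theory. In particular $\psi$ is bijective, its inverse being induced in the same way by $\Phi^{-1}$, and it is an isometry for the Euler pairing, which on $\ka_X$ and $\ka_{X'}$ is symmetric since both categories are Calabi--Yau of dimension two.

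It remains to check that $\psi$ is compatible with the Hodge structures. Via the Mukai-vector isomorphism $K_{\rm top}(X)\otimes\QQ\congpf H^*(X,\QQ)$ and the Grothendieck--Riemann--Roch formula, $\psi$ is induced by a cohomological correspondence whose kernel class $v(\kq)=\ch(\kq)\cdot\sqrt{\td(X\times X')}$ is algebraic; hence $\psi$ is a morphism of the weight-two Hodge structures carried by $\widetilde H(X,\ZZ)$ and $\widetilde H(X',\ZZ)$, and in particular it sends $\widetilde H^{2,0}(X)=v^{-1}(H^{3,1}(X))$ into $\widetilde H^{2,0}(X')$. Being a bijective isometry, it is the desired Hodge isometry $\widetilde H(X,\ZZ)\cong\widetilde H(X',\ZZ)$.

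For the converse, suppose first that $X$ is non-special, so that $\widetilde H^{1,1}(X,\ZZ)=A_2$ has rank two. Then a Hodge isometry $\widetilde H(X,\ZZ)\cong\widetilde H(X',\ZZ)$ forces $\widetilde H^{1,1}(X',\ZZ)$ to have rank two as well, that is, $X'$ is non-special, and the isometry restricts to a Hodge isometry $A_2^\perp\cong A_2^\perp$, i.e.\ $H^4(X,\ZZ)_{\rm pr}(1)\cong H^4(X',\ZZ)_{\rm pr}(1)$. By the global Torelli theorem for cubic fourfolds, Theorem \ref{thm:GT3}, this gives $X\cong X'$, and the identity functor then provides a Fourier--Mukai equivalence $\ka_X\cong\ka_{X'}$.

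Finally, let $X$ be a general special cubic fourfold, say general in $\kc_d$. A Hodge isometry $\widetilde H(X,\ZZ)\cong\widetilde H(X',\ZZ)$ identifies the algebraic lattices, so $X'$ is general in the same $\kc_d$. When $d$ satisfies $(\ast\ast')$, the theorem of Addington--Thomas and Huybrechts \cite{AT,HuyComp} supplies equivalences $\ka_X\cong\Db(S,\alpha)$ and $\ka_{X'}\cong\Db(S',\alpha')$ with suitable twisted K3 surfaces, the given Hodge isometry transports---after correcting the orientation of the four positive directions if necessary---to a Hodge isometry $\widetilde H(S,\alpha,\ZZ)\cong\widetilde H(S',\alpha',\ZZ)$, and the twisted derived global Torelli theorem then yields an equivalence $\Db(S,\alpha)\cong\Db(S',\alpha')$, hence a Fourier--Mukai equivalence $\ka_X\cong\ka_{X'}$. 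The main obstacle is to obtain the conclusion for a \emph{general}, rather than very general, special cubic fourfold, and for the discriminants $d$ that are not covered by the twisted K3 picture above: for this one wants to deform a Fourier--Mukai kernel along a family of cubic fourfolds inside $\kc_d$, starting from a point where an equivalence is already known, controlling the obstructions by the requirement that the cohomological action of the kernel stay a Hodge isometry along the family. This is precisely the kind of deformation theory of Fourier--Mukai kernels developed in \cite{HMS,Toda,AT,HuyComp}, and carrying it out in the required generality is the delicate part of the argument.
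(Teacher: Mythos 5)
The paper states this proposition without proof --- it is quoted from \cite{HuyComp}, with the preceding text explicitly deferring the hard part to the deformation theory of Fourier--Mukai kernels --- so there is no in-text argument to compare yours against; judged on its own terms, your write-up is a faithful reconstruction of the intended proof. The forward implication is correct and is exactly the mechanism of \cite{AT,HuyComp}: compose with the Fourier--Mukai projection $\Db(X)\twoheadrightarrow\ka_X$ and the inclusion $\ka_{X'}\hookrightarrow\Db(X')$, note that the resulting integral correspondence on $K_{\rm top}$ kills $\langle[\ko_X],[\ko_X(1)],[\ko_X(2)]\rangle$ and lands in $K'_{\rm top}(X')$, and use the algebraicity of the kernel's Mukai vector for compatibility with the weight-two Hodge structures. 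Two points deserve a sentence more than you give them: first, that the image of $[\widetilde\Phi]$ on \emph{all} of $K_{\rm top}(X)$, and not merely on classes of honest objects, lies in $K'_{\rm top}(X')$ follows from the adjunction $\chi(w_i',[\widetilde\Phi]\alpha)=\chi([\widetilde\Phi^L]w_i',\alpha)=0$, the left adjoint kernel annihilating the $\ko_{X'}(i)$; second, the isometry property itself comes from this same adjunction (the adjoint agreeing with the inverse on the subcategories), not just from the symmetry of the pairing on the Calabi--Yau pieces. The non-special converse via Theorem \ref{thm:GT3} is correct: for non-special $X$ the integral $(1,1)$-part of $\widetilde H(X,\ZZ)$ is exactly the distinguished $A_2$, so any Hodge isometry restricts to $H^4(X,\ZZ)_{\rm pr}(1)\cong H^4(X',\ZZ)_{\rm pr}(1)$ and Torelli gives $X\cong X'$, hence an equivalence $\ka_X\cong\ka_{X'}$. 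For the general special case you, like the paper, ultimately point to \cite{HMS,Toda,AT,HuyComp}; your outline of how the deformation argument should run is sensible (and do note that the twisted derived Torelli theorem requires an \emph{oriented} Hodge isometry, so the orientation correction you mention is genuinely needed), but be aware that this portion of your proposal is a citation rather than a proof --- which is also all the paper itself offers.
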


The results of the forthcoming \cite{BLMetal} complete this picture, so that eventually we will have
$$\begin{array}{lcl}
\ka_X\cong\Db(S)&\Leftrightarrow& \widetilde H(S,\ZZ)\cong \widetilde H(X,\ZZ) \text{ Hodge isometry},\\
\ka_X\cong\Db(S,\alpha)&\Leftrightarrow& \widetilde H(S,\alpha,\ZZ)\cong \widetilde H(X,\ZZ) \text{ Hodge isometry},\\
\ka_X\cong\ka_{X'}&\Leftrightarrow&\widetilde H(X,\ZZ)\cong \widetilde H(X',\ZZ)\text{ Hodge isometry}.
\end{array}$$


\end{document}